\documentclass[12pt,reqno]{amsart}

\usepackage{times}
\usepackage{amsmath,amsfonts, amstext,amssymb,amsbsy,amsopn,amsthm}
\usepackage{dsfont}
\usepackage{esint}
\usepackage{graphicx}   
\usepackage{hyperref}
\usepackage[all]{xy}
\usepackage{color}
\usepackage{tikz}
\usepackage{lineno,hyperref}
\newcommand{\la}{\lambda}
\synctex=-1

\setlength{\textheight}{8.50in} \setlength{\textwidth}{6.5in}
\setlength{\columnsep}{0.5in} \setlength{\topmargin}{0.0in}
\setlength{\headheight}{0.3in} \setlength{\headsep}{0.5in}
\setlength{\parindent}{1pc}
\setlength{\oddsidemargin}{0in}  
\setlength{\evensidemargin}{0in}

\newcommand{\lap}{\mbox{$\bigtriangleup$}}

\newcommand{\ra}{{\mbox{$\rightarrow$}}}
\newcommand{\be}{\begin{equation}}
\newcommand{\ee}{\end{equation}}
\newtheorem{mthm}{Theorem}
\newtheorem{theorem}{Theorem}[section]
\newtheorem{lemma}[theorem]{Lemma}

\theoremstyle{definition}

\theoremstyle{remark}
\newtheorem{remark}{Remark}[section]

\theoremstyle{remark}

\numberwithin{equation}{section}
\allowdisplaybreaks[4]

\begin{document}

\title{Asymptotic method of moving planes for fractional  parabolic  equations}

\author{Wenxiong Chen }
\address{Department of Mathematical Sciences, Yeshiva University, New York, NY,  10033 USA}
\email{wchen@yu.edu}

\author{Pengyan Wang }
\address{School of   Mathematics and Statistics, Xinyang Normal University,
Xinyang,    464000, P.R. China, and Department of Mathematical Sciences, Yeshiva University, New York, NY,  10033 USA}
\email{wangpy119@126.com}

\author{Yahui Niu}
\address{ School of Mathematics and Statistics and Hubei Key Laboratory of Mathematical Sciences,
Central China Normal University, Wuhan, 430079, P.R. China, and Department of Mathematical Sciences, Yeshiva University, New York, NY,  10033 USA}
\email{yahuniu@163.com}

\author{Yunyun Hu}
\address{Department of Mathematical Sciences,
Yeshiva University, New York, NY, 10033 USA}
\email{yhu2@mail.yu.edu}

\date{\today}

\begin{abstract}
In this paper, we develop a systematical approach in applying an asymptotic method of moving planes to investigate qualitative properties of positive solutions for fractional parabolic equations. We first obtain a series of needed key ingredients such as  narrow region principles, and various asymptotic  maximum and strong maximum principles for antisymmetric functions in both bounded and unbounded domains. Then we illustrate how this new method can be employed to obtain asymptotic radial symmetry and monotonicity of positive solutions in a unit ball and on the whole space. Namely, we show that no matter what the initial data are, the solutions will eventually approach to radially symmetric functions.

We firmly believe that the ideas and methods introduced here can be conveniently applied to study a variety of nonlocal parabolic problems    with more general operators and more general nonlinearities.
\end{abstract}

\maketitle

\tableofcontents

\section{Introduction}
\label{s:introduction}

Reaction-diffusion equations and systems have been studied very extensively in the
past few years as models for many problems arising in applications such as, quasi-geostrophic flow \cite{CV}, general shadow and activator-inhibitor system \cite{Ni}, nonlocal diffusions \cite{BPSV,BV}, and the fractional porous medium \cite{PQRV}. During the last decade, elliptic
nonlocal equations, especially those involving fractional Laplacians, have been studied by more and more scholars and a series of results have been obtained, such as \cite{CLL} for symmetry and nonexistence of solutions, \cite{CLZ,CZ1} for Liouville Theorems with  indefinite  nonlinearities terms,
\cite{DPV} for existence
and symmetry results of   Schr\"{o}dinger equations, \cite{CS2014,RS} for regularity of solutions for Dirichlet problems, and    \cite{CC,CG,JMMX} for conformally covariant equations in conformal geometry.

It is well-known that symmetry and monotonicity of solutions play crucial  roles in the analysis of nonlinear PDEs and remain important topics in modern PDE theories. For elliptic equations involving either local or nonlocal operators, a number of systematic approaches have been established
to study these qualitative properties, such as the method of moving planes\cite{CH,DLL,DQ,L1,LZ1,LZ2,LyL,ZhL}, the method of moving spheres\cite{CLZr,LZm}, and the sliding methods \cite{CW3,DQW,LC,WC1,WC2}.
For parabolic equations, there have been some results for local operators. For instance, Li \cite{L2} obtained symmetry for positive solutions in the situations where the initial data are symmetric; Hess and  Pol\'{a}\u{c}ik \cite{HP} established asymptotic symmetry of positive solutions to the following parabolic equation in bounded domains
$$ \left\{\begin{array}{lll}
u_t=\lap u (x,t) +f(t,u(x,t) ),& x\in \Omega,~t>0,\\
u (x,t)=0,& x \in \partial\Omega,~t>0,\\
u(x,0)=u_0(x),&  x\in \Omega .
\end{array}\right. $$
Subsequently, Pol\'{a}\u{c}ik  made a number of progresses in such directions for parabolic equations involving local operators in both bounded and unbounded domains \cite{CP,P1,P2}.

However, for parabolic equations involving nonlocal operators, very little is known so far.
Recently, Jarohs and Weth \cite{JW1} established asymptotic symmetry of weak solutions for a class of nonlinear fractional reaction-diffusion equations in bounded domains.

In this paper, we will develop a systematical scheme  to carry on the asymptotic method of moving planes for nonlocal parabolic problems, either on bounded or unbounded domains. For nonlocal elliptic equations, these kinds of approaches were introduced in \cite{CLL} and then summarized in the book \cite{CLM}, among which the {\em narrow region principle} and the {\em decay at infinity} have been employed extensively by many researchers to solve various problems \cite{CQ,CLLy,WN}. A parallel system for the fractional parabolic equations  will be established here by very elementary methods, so that it can be conveniently applied to various nonlocal parabolic problems. As immediate applications, we consider the following fractional parabolic equation
\begin{equation}\label{eq:jj1}
\frac{\partial u}{\partial t}(x,t) +(-\lap)^s u(x,t)=f(t,u(x,t))
\end{equation}
in a unit ball and on the whole space.

For each fixed $t>0$,
\begin{equation*}\aligned
(-\lap)^s u(x,t)&=C_{N,s} P.V. \int_{\mathbb R^N} \frac{u(x,t)-u(y,t)}{|x-y|^{N+2s}}dy\\
&= C_{N,s}  \lim\limits_{\varepsilon\rightarrow 0}\int_{\mathbb R^N\setminus B_{\varepsilon}(x)}\frac{u(x,t)-u(y,t)}{|x-y|^{N+2s}}dy,
\endaligned \end{equation*}
where $0<s<1$ and $P.V.$ stands for the Cauchy principal value.

Define $$ {\mathcal L}_{2s}=\{u(\cdot, t) \in L^1_{loc} (\mathbb{R}^N) \mid \int_{\mathbb R^N} \frac{|u(x,t)|}{1+|x|^{N+2s}}dx<+\infty\},$$
then it is easy to see that for $u\in C^{1,1}_{loc}\cap {\mathcal L}_{2s}$, $(-\lap)^s u(x,t) $ is well defined.

We mainly study asymptotic symmetry and monotonicity of solutions, which shows a tendency of positive solutions of the parabolic equations without symmetric initial value to  ``improve their symmetry" as time increases, becoming ``symmetric and monotone in the limit" as $t\rightarrow \infty$. In this context, we  introduce  the $\omega$-limit set of $u$ in this space:
$$ \omega(u):=\{\varphi  \mid \varphi=\lim u(\cdot,t_k) ~\mbox{for~some}~t_k\rightarrow \infty\}.$$
By standard parabolic estimates \cite{L1996,P2}, $\omega(u)$ is a nonempty   compact  subset of $C_0(\mathbb R^N)$ and
$$ \underset{t\rightarrow \infty}{\lim} dist_{C_0(\mathbb R^N)} (u(\cdot,t), \omega(u))=0.$$
Here and below, $C_0(\mathbb R^N)$ stands for the space of continuous functions on $\mathbb R^N$ decaying to $0$ at infinity and we assume $C_0(\mathbb R^N)$ is equipped with the supremum norm.

In Section 2, we will establish a series of key principles, such as {\em asymptotic narrow region principles} and {\em asymptotic maximum principles near infinity}. Then in Section 3 and Section 4, we will use two examples to illustrate how these key ingredients obtained in Section 2 can be used in the  method of moving planes to establish asymptotic symmetry and monotonicity of the positive solutions.

\subsection{Main results on qualitative properties}

In Section 3, we consider the Dirichlet problem on a unit ball
\begin{equation}\label{eq:jj20}
\left\{\begin{array}{lll}
\frac{\partial u}{\partial t}(x,t)+(-\lap)^s u(x,t) =f(t,u(x,t)), &\qquad  (x,t)\in B_1(0) \times (0,\infty),\\
u(x,t) =0, &\qquad (x,t)\in B^c_1(0) \times (0,\infty),
\end{array}\right.
\end{equation}
where $B^c_1(0)$ is the complement of the unit ball in $\mathbb{R}^N$.

We prove
\begin{mthm}\label{thmj5}
Assume that $u(x,t)\in \Big(C^{1,1}_{loc} \big(B_1(0)\big) \cap C\big(\overline{B_1(0)}\big)\Big)\times C^1\big((0,\infty)\big)$ is a positive uniformly bounded solution of \eqref{eq:jj20}.

Let $\alpha\in(0,1)$  such that $\frac{\alpha}{2s}\in(0,1)$. Suppose $f(t,u)\in L^\infty(\mathbb{R}^+\times\mathbb{R})$  is $ C^{\frac{\alpha}{2s}}_{loc}$  in $t$,  Lipschitz continuous in $u$ uniformly with respect to $t$ and
\begin{equation}
\label{46}
f(t,0)\geq0,\ \ \ t\geq0.
\end{equation}

Then for each $\varphi(x) \in \omega(u)$, we have either
\smallskip

$\varphi(x)\equiv0$ or
\smallskip

$\varphi(x)$ are radially symmetric and strictly decreasing about the origin.
\end{mthm}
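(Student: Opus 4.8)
The plan is to run the asymptotic method of moving planes in the $x_1$-direction and then rotate. Fix a direction, say $e_1$, and for $\lambda\in(-1,1)$ let $T_\lambda=\{x\in B_1(0):x_1=\lambda\}$, $\Sigma_\lambda=\{x\in B_1(0):x_1<\lambda\}$, and $x^\lambda=(2\lambda-x_1,x_2,\dots,x_N)$ the reflection of $x$ across $T_\lambda$. Set $w_\lambda(x,t)=u(x^\lambda,t)-u(x,t)$, which is antisymmetric in the sense $w_\lambda(x^\lambda,t)=-w_\lambda(x,t)$. Using the equation \eqref{eq:jj20} and the Lipschitz continuity of $f$ in $u$, one checks that $w_\lambda$ satisfies
\begin{equation*}
\frac{\partial w_\lambda}{\partial t}(x,t)+(-\lap)^s w_\lambda(x,t)=c_\lambda(x,t)\,w_\lambda(x,t),\qquad (x,t)\in\Sigma_\lambda\times(0,\infty),
\end{equation*}
where $c_\lambda(x,t)=\big(f(t,u(x^\lambda,t))-f(t,u(x,t))\big)/w_\lambda(x,t)$ is bounded uniformly in $\lambda,x,t$ by the Lipschitz constant; outside $B_1(0)$ one has $w_\lambda\ge 0$ in the reflected region because $u\equiv0$ there and $u>0$ inside. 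The first step (\emph{start moving the plane}) is to show that for $\lambda$ sufficiently close to $-1$, $\Sigma_\lambda$ is a narrow region, so the asymptotic narrow region principle from Section 2 applies and yields $\liminf_{t\to\infty}w_\lambda(x,t)\ge 0$ uniformly on $\Sigma_\lambda$; passing to the $\omega$-limit this gives $w_\lambda^\varphi(x):=\varphi(x^\lambda)-\varphi(x)\ge 0$ on $\Sigma_\lambda$ for all $\varphi\in\omega(u)$.

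The second step (\emph{move the plane to the limit}) is to define $\lambda_0=\sup\{\lambda\in(-1,0]:w_\mu^\varphi\ge 0\text{ in }\Sigma_\mu\text{ for all }\mu\le\lambda\text{ and all }\varphi\in\omega(u)\}$ and argue $\lambda_0=0$. Suppose $\lambda_0<0$. On $\Sigma_{\lambda_0}$ each limit $w_{\lambda_0}^\varphi\ge 0$ is antisymmetric and satisfies $(-\lap)^s w_{\lambda_0}^\varphi=c^\varphi w_{\lambda_0}^\varphi$ weakly (the time derivative drops out in the limit, which is precisely the payoff of passing to $\omega(u)$); by the asymptotic strong maximum principle for antisymmetric functions, for each $\varphi$ either $w_{\lambda_0}^\varphi\equiv 0$ on $\Sigma_{\lambda_0}$ or $w_{\lambda_0}^\varphi>0$ in the interior of $\Sigma_{\lambda_0}$. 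The case $w_{\lambda_0}^\varphi\equiv 0$ for some nonzero $\varphi$ forces $\varphi$ to be symmetric about $T_{\lambda_0}$, which is incompatible with $\varphi$ vanishing on $B_1^c(0)$ while being positive somewhere (the reflection of a point just inside $\partial B_1(0)$ near $T_{\lambda_0}$ lies outside $\overline{B_1(0)}$, where $\varphi=0$), unless $\varphi\equiv 0$. So for every nonzero $\varphi\in\omega(u)$ we have strict positivity $w_{\lambda_0}^\varphi>0$ in the interior of $\Sigma_{\lambda_0}$. One then uses a compactness argument on $\omega(u)$ together with the narrow-region/maximum principle applied in the thin strip $\Sigma_{\lambda_0+\delta}\setminus\overline{\Sigma_{\lambda_0-\tau}}$ (and the fact that $w_{\lambda_0}^\varphi$ is bounded below by a positive constant on the compact piece $\overline{\Sigma_{\lambda_0-\tau}}$) to push the plane slightly past $\lambda_0$, contradicting the definition of $\lambda_0$. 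Hence $\lambda_0=0$, giving $\varphi(x^0)\ge\varphi(x)$ for $x_1<0$; running the plane from the right gives the reverse, so $\varphi$ is symmetric in $x_1$ about $\{x_1=0\}$, and monotonicity (strict decrease) in $x_1$ on $\{x_1>0\}$ comes from the strong maximum principle applied to $w_\lambda^\varphi$ for $0<\lambda<1$.

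The last step is \emph{rotation}: the origin plays no special role, so the same argument in every direction $e\in\mathbb S^{N-1}$ shows every nonzero $\varphi\in\omega(u)$ is symmetric and decreasing about each hyperplane through the origin, which forces $\varphi$ to be radially symmetric and strictly radially decreasing. I expect the main obstacle to be the second step — specifically, making the ``move past $\lambda_0$'' argument rigorous uniformly over the whole $\omega$-limit set. Unlike the elliptic case where one works with a single solution, here $w_{\lambda_0}^\varphi$ depends on $\varphi$, and the positivity constant on the interior compact set must be controlled uniformly in $\varphi\in\omega(u)$; this is where the compactness of $\omega(u)$ in $C_0(\mathbb R^N)$ and the continuity of the dependence $\varphi\mapsto w_{\lambda_0}^\varphi$ are essential, and one must be careful that the asymptotic maximum principles from Section 2 are applied to families rather than to a fixed function. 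A secondary technical point is justifying that the limiting antisymmetric function indeed satisfies the stationary equation in the required (pointwise or weak) sense, which relies on the parabolic regularity estimates and the $C^{\alpha/2s}_{loc}$-in-$t$ hypothesis on $f$ quoted in the statement.
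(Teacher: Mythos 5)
Your skeleton (narrow region principle to start near $\lambda=-1$, definition of $\lambda_0$, a compactness argument uniform over $\omega(u)$ to push past $\lambda_0$, then rotation of the direction) is the same as the paper's. However, Step 2 has a genuine gap, concentrated exactly where the paper has to work hardest: the exclusion of the degenerate case $\psi_{\lambda_0}\equiv 0$ when $\lambda_0<0$. A preliminary issue is your claim that ``the time derivative drops out in the limit,'' so that $\psi_{\lambda_0}$ satisfies the elliptic equation $(-\lap)^s\psi_{\lambda_0}=c\,\psi_{\lambda_0}$. This is unjustified and in general false: the limit of the time-translates $u(\cdot,\cdot+t_k)$ is a function $u_\infty(x,t)$ solving a limiting \emph{parabolic} equation (cf.\ \eqref{49}); nothing forces it to be stationary, which is precisely why the paper states and proves the asymptotic strong maximum principle for antisymmetric functions (Theorem \ref{thmj2}, with Remark \ref{51} for bounded domains) at the parabolic level, with hypotheses on $w_\lambda(x,t)$ and $\underset{t\to\infty}{\underline{\lim}}\,w_\lambda\geq0$, via a subsolution construction in a parabolic cylinder. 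That part is repairable: you may cite Theorem \ref{thmj2} directly for $w_{\lambda_0}(x,t)$ --- but only after you know $\psi_{\lambda_0}>0$ somewhere, which brings us to the real gap.

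Your argument ruling out $\psi_{\lambda_0}\equiv 0$ for a nonzero $\varphi$ is not a proof. The correct geometric observation is that symmetry about $T_{\lambda_0}$ with $\lambda_0<0$ forces $\varphi$ to vanish at some \emph{interior} points of $B_1(0)$ (the reflections of points of $\Sigma_{\lambda_0}\setminus B_1(0)$, where $\varphi=0$, land inside the ball near its right portion); but from there you simply assert ``unless $\varphi\equiv0$,'' i.e.\ you invoke a strong-maximum-principle type statement for $\varphi$ that you have not established and cannot establish from symmetry alone, since $\varphi$ itself satisfies no elliptic equation. Tellingly, your proposal never uses the hypothesis \eqref{46}, $f(t,0)\geq 0$, and this is exactly the ingredient needed here. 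The paper's argument is: take $t_k$ with $u(\cdot,t_k)\to\bar\varphi$, pass (via the parabolic regularity of \cite{XFR}, using the $C^{\frac{\alpha}{2s}}_{loc}$-in-$t$ assumption on $f$) to a limit $u_\infty$ with $u_\infty(\cdot,1)=\bar\varphi$; at an interior zero $x_0$ of $\bar\varphi$ one has $\frac{\partial u_\infty}{\partial t}(x_0,1)\leq 0$ (since $u_\infty\geq0$ vanishes there) and $(-\lap)^s u_\infty(x_0,1)<0$ strictly (since $\bar\varphi\geq 0$, $\bar\varphi\not\equiv0$ in $B_1(0)$), so the equation forces $\tilde f(1,0)<0$, contradicting \eqref{46}. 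Without this step, or some substitute that actually uses \eqref{46}, your Step 2 cannot be completed, and the remainder of the argument (uniform positive lower bound on $\overline{\Omega_{\lambda_0-\delta}}$, narrow region in the thin strip, contradiction with the definition of $\lambda_0$), which otherwise matches the paper, does not get off the ground.
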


In Section 4, we study the nonlinear fractional parabolic equation on the whole space
\begin{equation}\label{eq:j1}
\frac{\partial u}{\partial t}+(-\lap)^s u=f(t,u),~(x,t) \in \mathbb R^N \times (0,\infty).
\end{equation}
We assume the following conditions on the nonlinearity $f$:\\
$(F):$ Let $\alpha\in(0,1)$  such that $\frac{\alpha}{2s}\in(0,1)$.  $f(t,u)$  is $ C^{\frac{\alpha}{2s}}_{loc}$  in $t$,  Lipschitz continuous in $u$ uniformly with respect to $t$
 and
\begin{equation}\label{eq:j103}
f(t,0)=0,\ \  \ f_u(t,0)<-\sigma,~t> 0 ,
  \end{equation}
where $\sigma>0$ is a constant. Also, assume that $f_u\equiv\frac{\partial f}{\partial u}$ is continuous near $u=0$.
\smallskip

We obtain
\begin{mthm}\label{thmj6}
Assume $f$ satisfies $(F)$. Let $u(x,t) \in \big( C^{1,1}_{loc}(\mathbb R^N)\cap {\mathcal L}_{2s} \big)\times C^1\big((0,\infty)\big)$ be a positive uniformly bounded solution of \eqref{eq:j1} satisfies
 \begin{equation}\label{eq:jj200}
\underset{|x|\rightarrow \infty }{\lim}~ u(x,t) =0,\ \ \text{uniformly for sufficiently large }t.
\end{equation}

Then we have, either
\smallskip

all $ \varphi (x) \in \omega (u)$ are identically 0 or
\smallskip

all $ \varphi (x) \in \omega (u)$ are radially symmetric and strictly decreasing about some point in $\mathbb R^N$.
\end{mthm}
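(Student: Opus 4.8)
The plan is to run the asymptotic method of moving planes in an arbitrary direction, which by the symmetry of $\mathbb{R}^N$ we may take to be $x_1$. For $\lambda\in\mathbb{R}$ set $T_\lambda=\{x\in\mathbb{R}^N\mid x_1=\lambda\}$, $\Sigma_\lambda=\{x\mid x_1<\lambda\}$, $x^\lambda=(2\lambda-x_1,x_2,\dots,x_N)$ the reflection of $x$ across $T_\lambda$, and define the antisymmetric function
\begin{equation}\label{eq:walambda}
w_\lambda(x,t)=u(x^\lambda,t)-u(x,t).
\end{equation}
Using equation \eqref{eq:j1} together with the Lipschitz continuity of $f$ in $u$, $w_\lambda$ satisfies a differential inequality of the form
\begin{equation}\label{eq:wineq}
\frac{\partial w_\lambda}{\partial t}(x,t)+(-\lap)^s w_\lambda(x,t)=c_\lambda(x,t)\,w_\lambda(x,t),\qquad x\in\Sigma_\lambda,\ t>0,
\end{equation}
with $c_\lambda$ bounded (here one uses $|f(t,u(x^\lambda,t))-f(t,u(x,t))|\le L|w_\lambda(x,t)|$). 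The goal is to show that for \emph{every} sequence $t_k\to\infty$ along which $u(\cdot,t_k)\to\varphi$, the reflected limits satisfy $\varphi(x^\lambda)\ge\varphi(x)$ in $\Sigma_\lambda$ for a suitable $\lambda$, and then, sliding $\lambda$, conclude radial symmetry about some point.

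\emph{Step 1: Start the plane from near infinity.} First I would use hypothesis \eqref{eq:jj200} together with $f_u(t,0)<-\sigma$ to show that for $\lambda$ sufficiently negative, $w_\lambda(x,t)\ge o(1)$ in $\Sigma_\lambda$ as $t\to\infty$, i.e.\ its negative part is asymptotically negligible. Concretely, on the region where $u$ is small, the coefficient $c_\lambda$ is close to $f_u(t,0)<-\sigma$, so \eqref{eq:wineq} becomes, on the set where $w_\lambda<0$, an inequality governed by a negative zeroth-order coefficient; this is exactly the regime covered by the \emph{asymptotic maximum principle near infinity} from Section 2. Applying that principle gives $\liminf_{t\to\infty}\inf_{\Sigma_\lambda}w_\lambda(x,t)\ge0$ for all $\lambda\le-R_0$ with $R_0$ large.

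\emph{Step 2: Move the plane to the right / define $\lambda_0$.} Let
\begin{equation}\label{eq:lambda0}
\lambda_0=\sup\{\lambda\in\mathbb{R}\mid \liminf_{t\to\infty}\inf_{\Sigma_\mu}w_\mu(\cdot,t)\ge0\ \text{for all }\mu\le\lambda\}.
\end{equation}
Step 1 shows $\lambda_0>-\infty$; since $u>0$ and $u\to0$ at infinity (so $u$ cannot be monotone increasing in $x_1$ on all of $\mathbb{R}^N$), one gets $\lambda_0<+\infty$. The crux is to show that at $\lambda=\lambda_0$ the limiting profiles are actually \emph{symmetric}: $\varphi(x^{\lambda_0})\equiv\varphi(x)$ for every $\varphi\in\omega(u)$. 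If not, then some $\varphi\in\omega(u)$ has $w_{\lambda_0}^\varphi\ge0$, $\not\equiv0$ in $\Sigma_{\lambda_0}$; here I would invoke the \emph{asymptotic strong maximum principle for antisymmetric functions} from Section 2 to upgrade this to $w_{\lambda_0}^\varphi>0$ in the interior, and then run a contradiction argument showing the plane could be pushed slightly past $\lambda_0$. The delicate point — and I expect \textbf{this to be the main obstacle} — is that the moving-plane inequality is only \emph{asymptotic} (it concerns $\liminf$ as $t\to\infty$, not a pointwise sign for each $t$), and $\omega(u)$ may contain many different profiles; so one cannot argue along a single trajectory. The remedy is to argue by contradiction with a sequence $\lambda_k\downarrow\lambda_0$ and times $t_k\to\infty$ for which $\inf_{\Sigma_{\lambda_k}}w_{\lambda_k}(\cdot,t_k)<-\delta<0$, pass to a limit profile $\varphi\in\omega(u)$, and combine the strong maximum principle with a \emph{narrow region principle} applied in the thin strip $\Sigma_{\lambda_k}\setminus\Sigma_{\lambda_0-\eta}$ (where $w$ has a sign) to rule this out. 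The uniform decay \eqref{eq:jj200} is what makes the narrow-region and near-infinity principles applicable uniformly in $k$.

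\emph{Step 3: Conclude symmetry and monotonicity.} Once $w_{\lambda_0}^\varphi\equiv0$ for all $\varphi\in\omega(u)$, each $\varphi$ is symmetric across $T_{\lambda_0}$. If $\varphi\not\equiv0$, the strong maximum principle applied to $w_\lambda^\varphi$ for $\lambda<\lambda_0$ (where $w_\lambda^\varphi\ge0$, $\not\equiv0$) gives strict monotonicity $\partial_{x_1}\varphi<0$ for $x_1>\lambda_0$; similarly moving planes from the right gives monotonicity on the other side and forces the plane of symmetry to be the \emph{same} $\lambda_0$ for both directions. Since the direction $x_1$ was arbitrary, every $\varphi\in\omega(u)$ is symmetric in every direction, hence radial about a common point, and strictly decreasing in the radial variable. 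Finally, the dichotomy ``all $\varphi\equiv0$ or all are radial'' follows because $\omega(u)$ is connected (being the $\omega$-limit set of a precompact trajectory) together with the fact that the set of centers varies continuously — alternatively, one notes that if one $\varphi\in\omega(u)$ vanishes identically then, by the $f_u(t,0)<-\sigma$ hypothesis and a linearized-stability argument near $0$, $u(\cdot,t)\to0$ uniformly, so $\omega(u)=\{0\}$.
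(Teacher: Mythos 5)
There is a genuine gap, and it sits exactly where the paper's hardest work is done. In your Step 2 you claim that at the limiting position every $\varphi\in\omega(u)$ must be symmetric about $T_{\lambda_0}$, arguing: ``if not, then \emph{some} $\varphi$ has $\psi_{\lambda_0}\ge 0$, $\not\equiv 0$, upgrade to $>0$ by the strong maximum principle, and push the plane past $\lambda_0$.'' This is logically backwards. Since $\lambda_0$ is defined by an inequality holding for \emph{all} $\varphi\in\omega(u)$, pushing the plane beyond $\lambda_0$ requires a uniform positive lower bound for $\psi_{\lambda_0}$ on compact sets \emph{for every} $\varphi\in\omega(u)$ (obtained via compactness of $\omega(u)$, as in the paper's proof of \eqref{eq:jj51}); that fails as soon as a single $\bar\varphi\in\omega(u)$ is already symmetric, i.e. $\bar\psi_{\lambda_0}\equiv 0$. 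So the correct conclusion of the moving-plane argument at $\lambda_0^-$ is only that \emph{at least one} limit profile is symmetric about $T_{\lambda_0^-}$ (this is \eqref{eq:j110} in the paper), not all of them. Your one-sentence remark that moving planes from the right ``forces the plane of symmetry to be the same'' is precisely the assertion $\lambda_0^-=\lambda_0^+$, and you give no argument for it.

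In the paper this is the main technical content of Section 4 (Step 3): assuming $\lambda_0^-<\lambda_0^+$, one has $\hat\varphi$ symmetric about $T_{\lambda_0^-}$ and $\bar\varphi$ symmetric about $T_{\lambda_0^+}$, hence for intermediate $\lambda$ the function $w_\lambda$ is $>q$ on a compact set $D$ at times $t_n$ (when $u(\cdot,t_n)\to\hat\varphi$) and $<-q$ at later times $\bar t_n$ (when $u(\cdot,\bar t_n)\to\bar\varphi$); there is therefore a first time $T_n$ at which $w_\lambda(\cdot,T_n)$ vanishes on $\partial D$. The contradiction then comes from two quantitative estimates — a global lower bound $w_\lambda\ge e^{-\theta(t-t_n)}\min\{0,\inf_{\tilde\Sigma_\lambda}w_\lambda(\cdot,t_n)\}$ with $\inf w_\lambda(\cdot,t_n)\to 0$, and a positive lower bound on a compact subset $D_0\subset\subset D$ — obtained by constructing an \emph{antisymmetric} subsolution $\Psi=C\,e^{-\theta(t-t_n)}\bigl(\tilde w_\mu(x,t)-\tau h(x)\bigr)$ (Lemmas \ref{lem4w92}--\ref{lem4w93}, with the auxiliary computations in the Appendix) and applying the antisymmetric maximum principle, followed by a second local subsolution near the vanishing boundary point. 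None of this is replaceable by the strong maximum principle plus a narrow-region argument alone, because those tools never compare two \emph{different} limit profiles at two different time scales; your proposal as written never confronts the possibility that distinct elements of $\omega(u)$ are symmetric about distinct planes. (Your handling of the dichotomy via $f_u(t,0)<-\sigma$ is essentially the paper's Lemma \ref{limit lemma} and Theorem \ref{thmwj2}, and Steps 1 and the start of Step 2 match the paper; the missing piece is the $\lambda_0^-=\lambda_0^+$ argument.)
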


When $f(t,u)=u^{p}(x,t)-u(x,t),~p>1$, \eqref{eq:j1} becomes the following  schr\"{o}dinger equation
$$ \frac{\partial u}{\partial t}+(-\lap)^s u(x,t)+u(x,t)=u^p(x,t).$$
Obviously,  $u^{p}(x,t)-u(x,t)$ satisfies \eqref{eq:j103} with small $u$.
Schr\"{o}dinger equations have received a great deal of interest from the mathematicians in the past twenty years or so, due in particular to their applications to optics. Schr\"{o}dinger equations also arise in quantum field theory, and in particular in the Hartree-Fock theory. For more details, one can see \cite{C2003,LK,SS} and the references therein.

\subsection{Ideas and key ingredients in the proofs}

To derive the radial symmetry and monotonicity of the functions in the $\omega$-limits set, we develop a series approaches in Section 2 to
carry on the {\em asymptotic method of moving planes} for nonlocal parabolic equations, which are quite different from the ones for nonlocal elliptic equations.

The key ingredients  and how they fit in the framework of the method of moving planes are illustrated in the following.

For simplicity, chose any direction to be the $x_1$ direction.  Let $$T_\lambda=\{x\in \mathbb R^N \mid x_1=\lambda,~\mbox{for}~\lambda \in \mathbb R\}$$
be the moving planes,
$$ \Sigma_\lambda= \{x\in \mathbb R^N \mid x_1< \lambda \}$$
be the region to the left of the plane, and
$$x^\lambda=(2\lambda-x_1, x_2,\cdots, x_N)$$
be the reflection of $x$ about the plane $T_\lambda$.

Assume that $u(x,t)$ is a solution of fractional parabolic equation \eqref{eq:jj1}. To compare the values of $u(x,t)$ with $u(x^\lambda,t)$, we denote
$$u_\lambda(x,t)= u(x^\lambda,t)~ \mbox{ and }~w_\lambda (x,t)=u_\lambda(x,t)-u(x,t).$$
Obviously, $w_\lambda (x,t)$ is anti-symmetric with respect to the plane $T_\lambda$, i.e.
\begin{equation*}
w_\lambda(x_1,x_2,\cdots,x_N,t)=-w_\lambda(2\lambda-x_1,x_2,\cdots,x_N,t).
\end{equation*}

For each $\varphi (x) \in \omega(u)$,
denote $$ \psi_\lambda(x)=\varphi(x^\lambda)-\varphi(x)=\varphi_\lambda(x)-\varphi(x),$$
which is an $\omega$-limit of $w_\lambda (x,t)$.

To obtain the symmetry of solutions in the unit ball, the first step is to show that for $\lambda$ sufficiently close to the left end of the domain, we have
\begin{equation}\label{eq:jj5}
\psi_\lambda(x)\geq 0,~x\in \Sigma_\lambda.
\end{equation}
This provides a starting position to move the plane.  The following {\em asymptotic  narrow region principle} will serve for this purpose.

\begin{mthm}\label{thmj1}(Asymptotic narrow region principle)
Let $\Omega$ be a region containing in the narrow slab $$\{x \mid \lambda-l<x_1<\lambda\}\subset \Sigma_\lambda$$  with some small $l$.

For $\bar{t}$ sufficiently large, assume that $ {w_\lambda(x,t)} \in (C^{1,1}_{loc}(\Omega) \cap {\mathcal L}_{2s}) \times C^1([\bar{t} ,\infty))$ is uniformly bounded and lower semi-continuous in $x$ on $\bar{\Omega}$, satisfying
\begin{equation*}
\left\{
\begin{array}{ll}
\frac{\partial w_\lambda}{\partial t}(x,t)+(-\triangle)^sw_\lambda(x,t)=c_\lambda(x,t) w_\lambda(x,t) ,~   &(x,t) \in  \Omega\times[\bar{t} ,\infty)  , \\
w_\lambda(x,t)\geq 0 , ~ &(x,t)  \in (\Sigma_\lambda  \backslash \Omega) \times[ \bar t,\infty), \\
w_\lambda(x,t)=- w_\lambda(x^\lambda,t), &(x,t)     \in \Omega\times[\bar{t} ,\infty) ,
\end{array}
\right.
\end{equation*}
where $c_\lambda(x,t)$ is bounded from above,  then the following statements hold:
\begin{enumerate}
\item[(\textbf{i})]   if $\Omega$ is bounded, then  for sufficiently small $l$,
\begin{equation}\label{eq:mb4}
\underset{t \rightarrow \infty}{\underline{\lim}}w_\lambda(x,t)\geq0 , \forall~ x\in \Omega;
\end{equation}
\item[(\textbf{ii})]  if $\Omega$ is unbounded, then the conclusion \eqref{eq:mb4} still holds under the condition
\begin{equation*}
\underset{|x|\rightarrow \infty}{\underline{\lim}}~w_\lambda(x,t)\geq 0, ~ \mbox{uniformly for} ~  t\geq \bar t .
\end{equation*}
\end{enumerate}
\end{mthm}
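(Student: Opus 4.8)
The plan is to argue by contradiction along a sequence of times, extracting a limiting antisymmetric function on which the ``static'' narrow region estimate for the fractional Laplacian can be applied. Suppose the conclusion \eqref{eq:mb4} fails; then there is a point $x^0 \in \Omega$ and a sequence $t_k \to \infty$ with $w_\lambda(x^0, t_k) \le -\delta < 0$ for some fixed $\delta > 0$. Since $w_\lambda$ is uniformly bounded and (by the hypothesis $c_\lambda$ bounded above together with standard parabolic regularity for the fractional heat operator) has interior spatial regularity uniform in large $t$, I would set $v_k(x,t) := w_\lambda(x, t + t_k)$ and pass to a subsequential limit $v_\infty$ on compact subsets of $\Omega \times \mathbb{R}$. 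The limit $v_\infty$ is still antisymmetric about $T_\lambda$, satisfies $\partial_t v_\infty + (-\lap)^s v_\infty = \bar c(x,t) v_\infty$ in $\Omega \times \mathbb{R}$ for some bounded-above $\bar c$ (a limit of translates of $c_\lambda$, extracted along a further subsequence), obeys $v_\infty \ge 0$ on $(\Sigma_\lambda \setminus \Omega) \times \mathbb{R}$, and has $v_\infty(x^0, 0) \le -\delta$. So the negative minimum of $v_\infty$ over $\overline{\Omega} \times \mathbb{R}$ is attained (or approached) at some interior point $(\bar x, \bar t)$ with $m := v_\infty(\bar x, \bar t) < 0$.

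At such a minimizing point, $\partial_t v_\infty(\bar x, \bar t) \le 0$ (if the minimum is genuinely attained in $t$; if it is only approached as $t \to -\infty$ one works with a minimizing sequence and a Hamilton-type / Bony-type trick to still get $\partial_t \le o(1)$), while the key is the lower bound on $(-\lap)^s v_\infty(\bar x, \bar t)$. Writing out the principal-value integral and using antisymmetry to fold the integral over $\Sigma_\lambda$ against its reflection,
\begin{equation*}
(-\lap)^s v_\infty(\bar x, \bar t) = C_{N,s}\,\mathrm{P.V.}\!\int_{\Sigma_\lambda}\!\Big(\frac{1}{|\bar x - y|^{N+2s}} - \frac{1}{|\bar x - y^\lambda|^{N+2s}}\Big)\big(v_\infty(\bar x,\bar t) - v_\infty(y,\bar t)\big)\,dy,
\end{equation*}
and since $\bar x \in \Sigma_\lambda$ one has $|\bar x - y| \le |\bar x - y^\lambda|$ for $y \in \Sigma_\lambda$, so the kernel is nonnegative; using $v_\infty(\bar x,\bar t) = m \le v_\infty(y,\bar t)$ and, crucially, that $m < 0$ so that $v_\infty(\bar x,\bar t) - v_\infty(y,\bar t) \le m - 0 = m$ whenever $y \in \Sigma_\lambda \setminus \Omega$, one extracts a strictly negative upper bound
\begin{equation*}
(-\lap)^s v_\infty(\bar x, \bar t) \;\le\; C_{N,s}\, m \int_{\Sigma_\lambda \setminus \Omega} \Big(\frac{1}{|\bar x - y|^{N+2s}} - \frac{1}{|\bar x - y^\lambda|^{N+2s}}\Big) dy \;=:\; C\, m,
\end{equation*}
where $C = C(\bar x) > 0$. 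In the bounded case, $\Omega$ lies in the slab of width $l$, so $\Sigma_\lambda \setminus \Omega$ contains a half-space-like set at controlled distance, and one checks $C$ is bounded below by a positive constant $C_0$ independent of the point as $l \to 0$ — in fact $C_0 \to \infty$ (or at least stays above $\sup \bar c$) as $l \to 0$. Combining, $0 \ge \partial_t v_\infty(\bar x,\bar t) = -(-\lap)^s v_\infty(\bar x,\bar t) + \bar c(\bar x,\bar t) m \ge (C_0 - \sup \bar c)\,m$, which for $l$ small forces $m \ge 0$, a contradiction. In the unbounded case (ii), the extra hypothesis $\varliminf_{|x|\to\infty} w_\lambda \ge 0$ uniformly in $t$ guarantees that the limiting negative minimum is not lost to infinity, so the same argument applies verbatim.

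I expect the main obstacle to be the compactness/limiting step rather than the pointwise estimate: one must be careful that the uniform-in-$t$ interior regularity needed to extract $v_\infty$ (and the convergence $c_\lambda(\cdot, \cdot + t_k) \to \bar c$) genuinely follows from the standing hypotheses $w_\lambda \in (C^{1,1}_{loc} \cap \mathcal{L}_{2s}) \times C^1$, uniform boundedness, and $c_\lambda$ bounded above, together with the $C^{\alpha/2s}$-in-$t$ type control inherited from the structure — and that the antisymmetry and the sign condition on $\Sigma_\lambda \setminus \Omega$ survive the limit. A secondary subtlety is the case where the infimum of $v_\infty$ is only approached as $t \to -\infty$: there one replaces the single minimizing point by an approximate-minimizer sequence and uses that $\partial_t v_\infty$ is then small along it, which is enough to keep the inequality chain intact. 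The pointwise fractional estimate itself is, by contrast, the robust and essentially elementary core of the argument.
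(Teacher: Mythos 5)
There is a genuine gap at the quantitative heart of your argument: the ``folded'' representation you write for $(-\lap)^s v_\infty$ at the minimizing point is not an identity. For a function antisymmetric about $T_\lambda$ one has
\begin{equation*}
(-\lap)^s v(\bar x)=C_{N,s}\,\mathrm{P.V.}\int_{\Sigma_\lambda}\Big(\frac{1}{|\bar x-y|^{N+2s}}-\frac{1}{|\bar x-y^\lambda|^{N+2s}}\Big)\big(v(\bar x)-v(y)\big)\,dy
\;+\;2\,C_{N,s}\,v(\bar x)\int_{\Sigma_\lambda}\frac{dy}{|\bar x-y^\lambda|^{N+2s}},
\end{equation*}
and the second term, which you dropped, is exactly where the narrow-region gain lives: at a negative minimum $v(\bar x)=m<0$ it contributes $2C_{N,s}\,m\int_{\Sigma_\lambda}|\bar x-y^\lambda|^{-N-2s}dy\le \frac{C}{l^{2s}}\,m$, since $\bar x$ lies within distance $l$ of $T_\lambda$; this is precisely the paper's estimate \eqref{laplace}. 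The constant you actually define, $C(\bar x)=C_{N,s}\int_{\Sigma_\lambda\setminus\Omega}\big(|\bar x-y|^{-N-2s}-|\bar x-y^\lambda|^{-N-2s}\big)dy$, does not have the property you claim: in the typical case where $\Omega$ fills the slab, so that $\Sigma_\lambda\setminus\Omega\subset\{y_1\le\lambda-l\}$, the two kernels coincide in the limit $\bar x\to T_\lambda$ and $C(\bar x)\to 0$; it is therefore not bounded below uniformly in $\bar x$, and it certainly does not grow like $l^{-2s}$ as $l\to 0$. So the step ``$C_0$ stays above $\sup\bar c$ for small $l$'' fails as written, and can only be repaired by restoring the omitted reflected-kernel term. (There is also a sign slip: the final bound should be $(\sup\bar c-C_0)\,m$, which is positive for $m<0$ once $C_0>\sup\bar c$, contradicting $\partial_t v_\infty\le 0$; your version ``forces $m\ge0$'' does not close the argument.)

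Second, the translate-and-compactness scheme is both unjustified under the stated hypotheses and unnecessary. The theorem only assumes $c_\lambda$ bounded from above (possibly unbounded below, with no modulus of continuity) and $w_\lambda$ merely lower semicontinuous in $x$ up to $\partial\Omega$, so the uniform-in-$t$ interior estimates needed to extract $v_\infty$ from $w_\lambda(\cdot,\cdot+t_k)$, and the convergence of the translated coefficients, do not follow from the hypotheses; moreover you leave unresolved the case in which the infimum of $v_\infty$ is only approached as $t\to-\infty$. The paper avoids all of this with an elementary device: set $\tilde w_\lambda=e^{mt}w_\lambda$ and prove, on each compact time interval $[\bar t,T]$, that $\tilde w_\lambda\ge\min\{0,\inf_\Omega\tilde w_\lambda(\cdot,\bar t)\}$, by evaluating the equation at an interior negative minimum $(x_0,t_0)$ (which exists by lower semicontinuity in case (i), and by the liminf condition at infinity in case (ii)) and using the $\frac{C}{l^{2s}}$ gain above with the choice $m=\frac{C}{2l^{2s}}$. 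Since $w_\lambda$ is uniformly bounded this yields $w_\lambda(x,t)\ge -C_1e^{-mt}$ for all $t>\bar t$, and letting $t\to\infty$ gives the asymptotic conclusion with no sign condition at $t=\bar t$, no regularity theory, and no limiting procedure.
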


In the whole space $\mathbb{R}^N$, we start moving the plane from near either $-\infty$ or $\infty$,  and to this end, we employ the following

\begin{mthm}\label{1} (Asymptotic maximum principle near infinity)
Let $\Omega$ be an unbounded domain in $\Sigma_\lambda$.

For $\bar{t}$ sufficiently large, assume $w_\lambda\in (C^{1,1}_{loc}(\Omega)\cap {\mathcal L}_{2s} )\times C^1([\bar{t},\infty))$ is uniformly bounded and lower semi-continuous in $x$ on  $\bar{\Omega}$, satisfying
\begin{equation*}
\left\{
\begin{array}{ll}
    \frac{\partial w_\lambda}{\partial t}(x,t)+(-\triangle)^sw_\lambda(x,t)=c_\lambda(x,t)w_\lambda(x,t) ,~&   (x,t) \in  \Omega\times[\bar{t},\infty)  , \\
  w_\lambda(x,t)\geq 0 , &(x,t) \in  (\Sigma_\lambda  \backslash \Omega)\times[\bar{t},\infty),\\
  w_\lambda(x,t)=- w_\lambda(x^\lambda,t), &(x,t)\in \Omega\times[\bar{t},\infty).
\end{array}
\right.
\end{equation*}
Further assume that there is a constant $\sigma>0$ such that $c_\lambda(x,t)<-\sigma$ at the points in $\Omega\times[\bar{t},\infty)$ where $w_\lambda(x,t)<0$ and

\begin{equation*}
\underset{|x|\rightarrow \infty}{\underline{\lim}}~w_\lambda(x,t)\geq0,\ \text{ uniformly for } t\geqslant \bar{t}.
\end{equation*}

Then
\begin{equation*}
\underset{t \rightarrow \infty}{\underline{\lim}}w_\lambda(x,t)\geq0 , \forall~ x\in \Omega.
\end{equation*}
\end{mthm}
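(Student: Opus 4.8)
The plan is to argue by contradiction: I would use a time‑translation together with a parabolic compactness argument to produce a limiting profile $w^\infty$ on which a negative infimum is genuinely attained at an interior space–time point, and then contradict the sign of the fractional Laplacian of $w^\infty$ at that point.

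First, it suffices to prove $\liminf_{t\to\infty}\inf_{x\in\Sigma_\lambda}w_\lambda(x,t)\ge 0$, since for fixed $x\in\Omega$ one has $w_\lambda(x,t)\ge A(t):=\inf_{\Sigma_\lambda}w_\lambda(\cdot,t)$, the infimum being over all of $\Sigma_\lambda$, where $w_\lambda$ is defined ($\ge 0$ on $\Sigma_\lambda\setminus\Omega$, and $=0$ on $T_\lambda$ by antisymmetry). Put $\underline A:=\liminf_{t\to\infty}A(t)$ and suppose $\underline A<0$. Choose $t_k\to\infty$ with $A(t_k)\to\underline A$. By the uniform decay hypothesis there is $R>0$ with $w_\lambda(x,t)>\underline A/2$ whenever $|x|\ge R$ and $t\ge\bar t$; hence for large $k$ I can pick $x_k$ with $|x_k|<R$ and $A(t_k)\le w_\lambda(x_k,t_k)\le A(t_k)+1/k$, so $w_\lambda(x_k,t_k)<0$ and thus $x_k\in\Omega$. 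Along a subsequence $x_k\to\bar x\in\overline{B_R\cap\Omega}$; using $w_\lambda\ge 0$ on $\Sigma_\lambda\setminus\Omega$, $w_\lambda=0$ on $T_\lambda$, and continuity of $w_\lambda$ up to $\partial\Omega$, the point $\bar x$ cannot lie on $\partial\Omega\cup T_\lambda$, so $\bar x$ belongs to the open set $\Omega$.

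Next I would time–translate: set $w^k(x,t):=w_\lambda(x,t+t_k)$, $c^k(x,t):=c_\lambda(x,t+t_k)$, which for every $T>0$ and $k$ large are defined on $\Omega\times[-T,\infty)$, solve $\partial_t w^k+(-\lap)^sw^k=c^kw^k$, are uniformly bounded, antisymmetric about $T_\lambda$, with $c^k<-\sigma$ exactly where $w^k<0$. By interior parabolic estimates for $\partial_t+(-\lap)^s$ together with the uniform $L^\infty$ (and $\mathcal L_{2s}$) bounds, a subsequence converges, $w^k\to w^\infty$ in $C^{1,1}_{loc}(\Omega)\times C^{0,\gamma}_{loc}(\mathbb R)$ with $(-\lap)^sw^k\to(-\lap)^sw^\infty$ locally uniformly, where $w^\infty$ is entire on $\Omega\times\mathbb R$, antisymmetric about $T_\lambda$, $\ge0$ on $\Sigma_\lambda\setminus\Omega$, and $w^\infty(x,t)\ge\liminf_k A(t+t_k)\ge\underline A$ for all $(x,t)\in\Sigma_\lambda\times\mathbb R$. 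Since $w^k(x_k,0)=w_\lambda(x_k,t_k)\to\underline A$ and $x_k\to\bar x\in\Omega$, we get $w^\infty(\bar x,0)=\underline A<0$, so $w^\infty$ attains over $\Sigma_\lambda\times\mathbb R$ its minimum $\underline A$ at the interior point $(\bar x,0)$. Because $c^k<-\sigma$ holds with the \emph{same} $\sigma$ for all $k$ on $\{w^k<0\}$, passing to the limit in the integrated form of the equation the limit inherits on $\{w^\infty<0\}$ the (a.e.) differential inequality
\begin{equation*}
\partial_t w^\infty+(-\lap)^sw^\infty+\sigma w^\infty\ge 0 .
\end{equation*}

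Then I would reach the contradiction at $(\bar x,0)$. Integrating the last inequality over $[-h,0]$ (valid since $w^\infty<0$ near $(\bar x,0)$), using that $t=0$ is a global minimum of $w^\infty(\bar x,\cdot)$ so that $w^\infty(\bar x,0)-w^\infty(\bar x,-h)\le 0$, then dividing by $h$ and letting $h\to 0^+$, one obtains
\begin{equation*}
(-\lap)^sw^\infty(\bar x,0)+\sigma\underline A\ge 0,\qquad\text{i.e.}\qquad (-\lap)^sw^\infty(\bar x,0)\ge \sigma|\underline A|>0 .
\end{equation*}
On the other hand, $x\mapsto w^\infty(x,0)$ attains its $\Sigma_\lambda$–infimum $\underline A<0$ at $\bar x$ (so $\bar x\notin T_\lambda$ and $\dist(\bar x,T_\lambda)>0$), and the usual rewriting of the singular integral over $\Sigma_\lambda$ via $w^\infty(y^\lambda,0)=-w^\infty(y,0)$ and $|\bar x-y^\lambda|>|\bar x-y|$ for $y\in\Sigma_\lambda$ yields
\begin{equation*}
(-\lap)^sw^\infty(\bar x,0)\le 2C_{N,s}\,w^\infty(\bar x,0)\int_{\Sigma_\lambda}\frac{dy}{|\bar x-y^\lambda|^{N+2s}}<0 ,
\end{equation*}
a contradiction. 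Hence $\underline A\ge 0$, and the theorem follows.

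The hard part is the construction of $w^\infty$: one needs interior regularity estimates for the nonlocal parabolic operator that are uniform in the translates — strong enough to define and pass to the limit in $(-\lap)^s$ — together with the uniform $L^\infty$ and $\mathcal L_{2s}$ bounds to dominate the nonlocal tail; and since $c_\lambda$ is only bounded and sign–controlled where $w_\lambda<0$, one must carry the differential \emph{inequality} $\partial_t w_\lambda+(-\lap)^sw_\lambda+\sigma w_\lambda\ge 0$ on $\{w_\lambda<0\}$ through the limit (it preserves the uniform constant $\sigma$, which is exactly what the final contradiction requires) rather than the equation itself. A secondary technical point, needed so that the limiting minimizer lies in $\Omega$, is the sign of $w_\lambda$ on $(\Sigma_\lambda\setminus\Omega)\cup T_\lambda$ together with its continuity up to $\partial\Omega$.
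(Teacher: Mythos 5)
Your route is genuinely different from the paper's, and as written it has two real gaps. The paper never passes to a limiting profile: it sets $\tilde w_\lambda=e^{mt}w_\lambda$ with $m=\sigma/2$, shows that on each finite time interval $[\bar t,T]$ a negative minimum of $\tilde w_\lambda$ over $\Sigma_\lambda\times[\bar t,T]$ would be attained at a finite point of $\Omega$ (this is precisely what the uniform spatial decay hypothesis is used for, together with lower semicontinuity and $w_\lambda\ge 0$ on $\Sigma_\lambda\setminus\Omega$), and then contradicts the equation at that point using $\partial_t\tilde w_\lambda\le 0$, the antisymmetric-kernel estimate $(-\Delta)^s\tilde w_\lambda(x_0,t_0)<0$, and $c_\lambda(x_0,t_0)<-\sigma$; this even gives the quantitative bound $w_\lambda\ge -C e^{-\sigma t/2}$, from which the conclusion is immediate. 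No compactness, no regularity theory beyond the stated assumptions is needed.

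The first gap in your argument is the claim that $\bar x\in\Omega$. The theorem assumes only that $w_\lambda(\cdot,t)$ is lower semicontinuous on $\bar\Omega$, not continuous up to $\partial\Omega$; and even granting continuity for each fixed $t$, from $x_k\to\bar x\in\partial\Omega$ and $w_\lambda(x_k,t_k)\to\underline A<0$ you cannot contradict $w_\lambda(\bar x,t_k)\ge 0$ without equicontinuity of the family $\{w_\lambda(\cdot,t_k)\}$ up to $\partial\Omega$, which interior estimates do not give and which is not assumed. If $\bar x$ lands on $\partial\Omega$ (or on $T_\lambda$), the equation and the classical evaluation of $(-\Delta)^s w^\infty$ at $(\bar x,0)$ are unavailable and the whole contradiction collapses. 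The second gap is the compactness step itself: to get $w^k\to w^\infty$ with $(-\Delta)^s w^k\to(-\Delta)^s w^\infty$ (and $w^\infty$ defined on all of $\mathbb R^N$ so that the tail of the nonlocal integral is controlled) you need parabolic regularity estimates uniform in $k$, hence control of the right-hand side $c_\lambda w_\lambda$; but this theorem only assumes $c_\lambda<-\sigma$ at points where $w_\lambda<0$ and imposes no bound on $c_\lambda$ elsewhere, so the estimates you invoke are not justified under the stated hypotheses. You flag this as ``the hard part,'' but it is not supplied — and it is avoidable, since the paper's finite-time maximum-principle argument at a genuinely attained interior minimum requires neither the entire-in-time limit profile nor any regularity beyond what is assumed.
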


In the second step, we move the plane $T_\lambda$ continuously  to the right as long as inequality \eqref{eq:jj5} holds to its rightmost limiting position $T_{\lambda^-_0}$, where
$$\lambda_0^-=\sup\{\lambda \mid   \psi_\mu(x) \geq 0,~\text{for all } \varphi\in \omega(u),~\forall~x\in \Sigma_\mu,~\mu \leq \lambda\}. $$
In the case of the unit ball, we show that $\lambda^-_0$ must be zero. Then by the arbitrariness of the $x_1$ direction,
we deduced the radial symmetry and monotonicity of the solution about the origin. In the case of the whole space, one can only show that there is at least one $\varphi \in \omega(u)$ which is symmetric about the plane $T_{\lambda^-_0}$. In order to prove that all $\varphi \in \omega(u)$ must be symmetric about the same plane, we then start from near $x_1 = +\infty$ and move the plane to its left most position. More precisely,
define
$$\lambda^+_0 = \inf \{ \lambda \mid \psi_\mu (x) \leq 0, \, ~\text{for all }  \varphi\in \omega(u),\ x \in \Sigma_\mu, \, \mu \geq \lambda \}.$$

If $$\lambda^-_0 = \lambda^+_0,$$ we are done.

If $$\lambda^-_0 < \lambda^+_0,$$ then in the third step,
for some $\lambda$ in between, we will construct a sub-solution on $\mathbb{R}^N \setminus \Sigma_\lambda$ together with certain estimates on the asymptotic behavior of $w_\lambda (x,t)$ to derive a contradiction, which is one of the major tasks of this paper.

Such a general approach was introduced in \cite{P2} to deal with the local parabolic equations. However,  many of the techniques there
no longer work for nonlocal problems. Among others, one typical difference is that, a sub-solution for a local problem only need to be less or equal to the solution $w_\lambda (x,t)$ on the boundary $\partial D$ of the domain $D$, while for a nonlocal problem, this inequality needs to be hold on {\em the whole complement of the domain} $\mathbb{R}^N \setminus D$.
This makes the construction much more difficult, and to accomplish it, we introduce several new ideas, among which, one is to explore the anti-symmetry of $w_\lambda$ and make up an anti-symmetric sub-solution $\Psi(x,t)$. Based on the parabolic version of {\em maximum principle for anti-symmetric functions} established in Section 2, we only need to require that the inequality hold on the complement of $D$ in a half space:
$$ \Psi(x,t) \leq w_\lambda (x, t), \;\; x \in (\mathbb{R}^N\setminus\Sigma_\lambda) \setminus D.$$

We believe that these new ideas and techniques will become effective tools in solving a variety of other nonlocal and nonlinear parabolic problems.



The following strong maximum principles play important roles in the second and third steps.

\begin{mthm}\label{thmwj2} (Asymptotic strong maximum principle)
Assume $(F)$ holds. Suppose that $$u(x,t)\in (C^{1,1}_{loc}(\Omega)\cap {\mathcal L}_{2s} )\times C^1((0,\infty))$$  is a  positive uniformly bounded solution to
\begin{equation*}
\frac{\partial u}{\partial t}+(-\lap)^su=f(t,u),    (x,t)\in \Omega \times (0,\infty)
\end{equation*}
with \eqref{eq:jj200} holds,
where $\Omega\subset \mathbb R^N$ is either a bounded or an unbounded  domain.

Assume that there is some $\varphi \in \omega(u)$ which is positive somewhere in $\Omega$.
Then
$$\varphi(x)>0 \;  \mbox{ everywhere in } \; \Omega \; \mbox{  for all } \varphi \in \omega(u).$$

In other words, the following strong maximum principle holds for the whole family of functions $\{ \varphi \mid \varphi \in \omega(u)\}$ simultaneously:

Either
$$\varphi(x)\equiv 0 \;  \mbox{ everywhere in } \; \Omega \; \mbox{  for all } \varphi \in \omega(u),$$
or
$$\varphi(x)>0 \;  \mbox{ everywhere in } \; \Omega \; \mbox{  for all } \varphi \in \omega(u).$$

\end{mthm}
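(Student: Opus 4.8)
The plan is to derive the dichotomy from two separate facts. First, although $\varphi$ itself solves no obvious equation, it is the time-zero slice of a limiting parabolic solution, and a pointwise nonlocal maximum principle at a minimum point of that solution forces each individual $\varphi\in\omega(u)$ to be either identically $0$ or strictly positive throughout $\Omega$. Second, the condition $f_u(t,0)<-\sigma$ makes the zero state asymptotically stable, so that $0\in\omega(u)$ already implies $\omega(u)=\{0\}$; hence $0$ and a nontrivial limit function cannot coexist. Combining these with $\varphi\ge0$ (each $\varphi\in\omega(u)$ is a uniform limit of positive functions) gives the statement. As a preliminary, since $f(t,0)=0$ and $f(t,\cdot)$ is Lipschitz uniformly in $t$, I would rewrite the equation, where $u>0$, as the linear one
\[
\frac{\partial u}{\partial t}+(-\Delta)^su=c(x,t)\,u,\qquad c(x,t):=\int_0^1 f_u\bigl(t,\theta u(x,t)\bigr)\,d\theta,
\]
with $\|c\|_\infty\le L$, $L$ the Lipschitz constant of $f$ in $u$.

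\emph{Step 1: each $\varphi\in\omega(u)$ is $\equiv0$ or $>0$ in $\Omega$.} Pick $t_k\to\infty$ with $u(\cdot,t_k)\to\varphi$ in $C_0(\mathbb R^N)$ and set $u_k(x,\tau):=u(x,t_k+\tau)$, $c_k(x,\tau):=c(x,t_k+\tau)$. By the standard parabolic estimates invoked in the Introduction (uniform in time because $u$ is uniformly bounded), along a subsequence $u_k\to U$ in $C^{1,1}_{loc}$ in $x$, locally uniformly in $\tau$, with tails controlled in $\mathcal{L}_{2s}$, while $c_k\rightharpoonup^{*}\tilde c$ in $L^\infty$ with $\|\tilde c\|_\infty\le L$; consequently $U\ge0$ (a locally uniform limit of the nonnegative $u_k$) is a classical solution of $\partial_\tau U+(-\Delta)^sU=\tilde c(x,\tau)\,U$ on $\Omega\times(-1,\infty)$, with $U(\cdot,0)=\varphi$. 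Suppose $\varphi(x_1)=0$ for some $x_1\in\Omega$. Then $(x_1,0)$ is a global minimum of $U$, so $\partial_\tau U(x_1,0)=0$, and, since $U(\cdot,0)\ge0$ vanishes at $x_1$,
\[
(-\Delta)^sU(x_1,0)=-C_{N,s}\int_{\mathbb R^N}\frac{U(y,0)}{|x_1-y|^{N+2s}}\,dy\le0 .
\]
Evaluating the equation at $(x_1,0)$ gives $(-\Delta)^sU(x_1,0)=\tilde c(x_1,0)U(x_1,0)=0$, so the last integral vanishes and $U(\cdot,0)\equiv0$, i.e. $\varphi\equiv0$. Hence every $\varphi\in\omega(u)$ is identically $0$ or is positive at every point of $\Omega$.

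\emph{Step 2: $0\in\omega(u)$ forces $\omega(u)=\{0\}$.} Using $f_u(t,0)<-\sigma$ and the continuity of $f_u$ near $u=0$, fix $\varepsilon_0>0$ with $f_u(t,v)<-\tfrac{\sigma}{2}$ for all $t\ge0$, $0\le v\le\varepsilon_0$. If $\|u(\cdot,t_0)\|_\infty<\varepsilon_0$ for some $t_0$, then as long as $u<\varepsilon_0$ one has $\partial_t u+(-\Delta)^su=c(x,t)u\le-\tfrac{\sigma}{2}u$, so $v:=e^{\sigma t/2}u\ge0$ is a subsolution of the fractional heat equation; by the maximum principle (on $\mathbb R^N$ via \eqref{eq:jj200}, or on $B_1$ via the Dirichlet condition) $\sup v(\cdot,t)$ is nonincreasing, whence $\|u(\cdot,t)\|_\infty\le e^{-\sigma(t-t_0)/2}\|u(\cdot,t_0)\|_\infty$. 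This simultaneously keeps $u<\varepsilon_0$ for all $t\ge t_0$ and yields $u(\cdot,t)\to0$. Therefore, if $0\in\omega(u)$, then $\|u(\cdot,t_k)\|_\infty\to0$ along some $t_k\to\infty$, so $\|u(\cdot,t_k)\|_\infty<\varepsilon_0$ eventually and $\omega(u)=\{0\}$.

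\emph{Conclusion, and where the difficulty lies.} Suppose some $\varphi_0\in\omega(u)$ is positive somewhere in $\Omega$. By Step 1, $\varphi_0>0$ everywhere in $\Omega$, so $\varphi_0\not\equiv0$ and $\omega(u)\ne\{0\}$; by the contrapositive of Step 2, $0\notin\omega(u)$, so no member of $\omega(u)$ vanishes identically, and Step 1 then gives $\varphi>0$ everywhere in $\Omega$ for every $\varphi\in\omega(u)$. The stated alternative follows, since otherwise every $\varphi\in\omega(u)$ vanishes somewhere in $\Omega$, hence identically. I expect the only genuinely delicate point to be the compactness in Step 1: producing a limit $U$ that is a bona fide classical solution of a linear equation with merely bounded coefficient, which requires uniform-in-time interior parabolic regularity together with $\mathcal{L}_{2s}$-tail control, despite $f$ being only $C^{\frac{\alpha}{2s}}_{loc}$ in $t$. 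Once $U$ and its linearized equation are in hand, the minimum-point computation is immediate, and Step 2 is routine given the maximum principle for the fractional heat equation from Section 2.
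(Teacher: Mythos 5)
Your proposal is correct in substance, but it reaches the crucial step by a genuinely different mechanism than the paper. The ``stability of zero'' half is common to both: your Step 2 (once $\|u(\cdot,t_0)\|_\infty<\varepsilon_0$, compare $e^{\sigma t/2}u$ with a decaying barrier and conclude exponential decay, so $0\in\omega(u)$ forces $\omega(u)=\{0\}$) is essentially the paper's Lemma \ref{limit lemma}, which compares $u$ with the ODE supersolution $\xi(t)$ built from $f_u(t,\eta)<-\sigma$ and concludes that one nontrivial limit function forces $\underline{\lim}_{t\to\infty}\|u(\cdot,t)\|_\infty>0$, i.e.\ every $\varphi\in\omega(u)$ is positive somewhere. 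Where you diverge is the upgrade from ``positive somewhere'' to ``positive everywhere'': the paper fixes an arbitrary $\bar x\in\Omega\setminus D$ and constructs the explicit subsolution $\underline u=\chi_D\,\bar u+\varepsilon\,\zeta(x)\eta(t)$ on $B_\delta(\bar x)\times[1-\varepsilon_o,1+\varepsilon_o]$, using that $(-\lap)^s(\chi_D\bar u)\le -C<0$ near $\bar x$ because $u_\infty\ge c_0/2$ on $D$, and then applies the maximum principle of Lemma \ref{3} to obtain the quantitative bound $\varphi(\bar x)\ge e^{-m}\varepsilon>0$; you instead argue by contradiction at an interior zero $(x_1,0)$ of the limiting solution $U\ge 0$, where $\partial_\tau U(x_1,0)=0$ and $(-\lap)^sU(x_1,0)=-C_{N,s}\int_{\mathbb R^N}U(y,0)|x_1-y|^{-N-2s}\,dy\le 0$ must balance $\tilde c\,U=0$, so the integral vanishes and $\varphi\equiv 0$. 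Your route is shorter and uses the nonlocality more directly, at the price of being purely qualitative, whereas the paper's cutoff-subsolution construction yields a pointwise lower bound and is the template reused in the proof of Theorem \ref{thmj2a} and in Section 4. One technical point you should tighten: a weak-$*$ limit $\tilde c$ of $c_k$ only gives the limiting equation almost everywhere (the product $c_ku_k$ converges weakly, not pointwise), while your touching-point computation needs the equation at the single point $(x_1,0)$; the clean fix is to pass to the limit in the pointwise inequality $|\partial_\tau u_k+(-\lap)^su_k|\le L\,u_k$, which yields $|\partial_\tau U+(-\lap)^sU|\le L\,U$ everywhere and gives exactly $\partial_\tau U(x_1,0)+(-\lap)^sU(x_1,0)=0$ at the zero point. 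With that adjustment, and the same appeal to the interior parabolic regularity of \cite{XFR} that the paper itself makes to justify its limiting procedure, your argument is sound.
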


\begin{mthm}\label{thmj2} (Asymptotic strong maximum principle for antisymmetric function)
For sufficiently large $\bar{t}$, suppose that $w_\lambda(x,t)\in (C^{1,1}_{loc}(\Sigma_{\lambda})\cap {\mathcal L}_{2s} )\times C^1([\bar{t},\infty)) $ is bounded and satisfies
\begin{equation*}
\left\{\aligned
&\frac{\partial w_\lambda}{\partial t}+(-\Delta)^sw_\lambda=c_\la(x,t)w_\lambda,  & (x,t)\in \Sigma_{\lambda} \times [\bar{t},\infty),\\
&w_\lambda(x,t)=-w_\lambda(x^\lambda,t),  &(x,t)\in  \Sigma_{\lambda} \times [\bar{t},\infty),\\
&\underset{t \rightarrow \infty}{\underline{\lim}} w_\lambda(x,t)\geq 0, & x\in \Sigma_\la,
\endaligned \right.
\end{equation*}
with $c_\la(x,t)$ is bounded.
Assume $\psi_\lambda >0$ somewhere in $\Sigma_\lambda$.

Then $\psi_\lambda(x)>0$ in $\Sigma_{\lambda}$.
\end{mthm}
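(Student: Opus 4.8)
The plan is to argue by contradiction. Suppose there is a point $x^0\in\Sigma_\lambda$ with $\psi_\lambda(x^0)=0$; the goal is to deduce that this forces $\psi_\lambda\equiv 0$ on $\Sigma_\lambda$, contradicting the assumption that $\psi_\lambda$ is positive somewhere. Write $\psi_\lambda=\lim_k w_\lambda(\cdot,t_k)$ for some $t_k\to\infty$. Passing to the limit in the antisymmetry relation and in $\underset{t\to\infty}{\underline{\lim}}\,w_\lambda\geq 0$ gives that $\psi_\lambda$ is antisymmetric about $T_\lambda$ and $\psi_\lambda\geq 0$ on $\Sigma_\lambda$; by uniform interior parabolic estimates for the linear fractional heat operator with bounded zeroth-order coefficient, together with the uniform bound on $w_\lambda$, we also have $\psi_\lambda\in C^{1,1}_{loc}(\Sigma_\lambda)\cap{\mathcal L}_{2s}$, so $(-\lap)^s\psi_\lambda(x^0)$ is classically defined. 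Splitting the defining integral over $\Sigma_\lambda$ and its reflection $\Sigma_\lambda^c$, substituting $y\mapsto y^\lambda$ in the latter, and using $\psi_\lambda(x^0)=0$ and antisymmetry, we obtain
$$(-\lap)^s\psi_\lambda(x^0)=-C_{N,s}\,\PV\!\int_{\Sigma_\lambda}\psi_\lambda(y)\Big(\frac{1}{|x^0-y|^{N+2s}}-\frac{1}{|x^0-y^\lambda|^{N+2s}}\Big)\,dy .$$
Since $x^0$ and $y$ both lie in $\Sigma_\lambda$, one has $|x^0-y|<|x^0-y^\lambda|$, so the kernel in parentheses is strictly positive on $\Sigma_\lambda$; combined with $\psi_\lambda\geq 0$ this yields $(-\lap)^s\psi_\lambda(x^0)\leq 0$, with equality only if $\psi_\lambda\equiv 0$ in $\Sigma_\lambda$.

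It remains to prove the reverse inequality $(-\lap)^s\psi_\lambda(x^0)\geq 0$, and this is where the asymptotic/parabolic structure is used. I would pass to an entire solution of the limiting equation: using the uniform interior estimates and the uniform bound on $w_\lambda$, a subsequence of the time translates $(x,\tau)\mapsto w_\lambda(x,t_k+\tau)$ converges, locally uniformly on $\Sigma_\lambda\times(-1,1)$ (and in $C^1$ in $\tau$, $C^{1,1}_{loc}$ in $x$), to a limit $V$ which is antisymmetric in $x$, satisfies $V\geq 0$ and $V(\cdot,0)=\psi_\lambda$, and solves $\partial_\tau V+(-\lap)^s V=\bar c\,V$ on $\Sigma_\lambda\times(-1,1)$ for some bounded limiting coefficient $\bar c$; the uniform tail bound furnished by $\|w_\lambda\|_{L^\infty}$ allows the nonlocal term to pass to the limit by dominated convergence. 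Because $V\geq 0$ and $V(x^0,0)=0$, the point $(x^0,0)$ is an interior minimum of $V$, hence $\partial_\tau V(x^0,0)=0$; evaluating the limiting equation there gives
$$(-\lap)^s\psi_\lambda(x^0)=(-\lap)^s V(x^0,0)=\bar c(x^0,0)\,V(x^0,0)=0 .$$
(If one prefers to avoid taking a limit of the merely bounded coefficients, the same conclusion follows by integrating the equation for $w_\lambda$ at $x=x^0$ over $[t_k-\delta,t_k]$, letting $k\to\infty$, dividing by $\delta$, and sending $\delta\to 0$, using only $\|c_\lambda\|_{L^\infty}<\infty$ together with $V(x^0,\tau)\to 0$ as $\tau\to 0$.) Combining this with the upper bound from the first paragraph forces $(-\lap)^s\psi_\lambda(x^0)=0$, hence $\psi_\lambda\equiv 0$ in $\Sigma_\lambda$ — the desired contradiction. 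Therefore $\psi_\lambda(x)>0$ for every $x\in\Sigma_\lambda$.

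The main obstacle I anticipate is the extraction of the entire solution $V$ and the limit in the nonlocal term: one needs interior parabolic estimates for $\partial_t w+(-\lap)^s w=c_\lambda w$ that are \emph{uniform in $t$} (yielding, on a fixed ball around $x^0$, a uniform $C^{1,1}$ bound in $x$ and a uniform modulus of continuity in $t$ along the sequence $t_k$), and one needs the uniform tail control from the global bound on $w_\lambda$ so that $(-\lap)^s w_\lambda(\cdot,t_k+\tau)\to(-\lap)^s V(\cdot,\tau)$ pointwise. The fact that $c_\lambda$ is assumed only bounded, not continuous, is a minor additional nuisance, handled by the integral-in-time variant noted above. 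The remaining ingredients — the antisymmetric decomposition of $(-\lap)^s$, the strict positivity of the reflected kernel, and the minimum-point argument — are routine.
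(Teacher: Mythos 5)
Your argument is correct in substance, but it takes a genuinely different route from the paper. The paper proves this theorem by propagation of positivity: after passing to a limit $w_\infty$ of time translates (so that $w_\infty(\cdot,1)=\psi_\lambda$ and $w_\infty\geq 0$), it fixes an arbitrary $\bar x\in\Sigma_\la\setminus D$ (with $D$ a compact set where $\psi_\la\geq c>0$), constructs the antisymmetric subsolution $\underline w(x,t)=\chi_{D\cup D_\la}(x)\,e^{mt}w_\infty(x,t)+\varepsilon\eta(t)g(x)$ with $g(x)=(\delta^2-|x-\bar x|^2)_+^{s}-(\delta^2-|x-\bar x^\la|^2)_+^{s}$, and invokes the antisymmetric maximum principle (Lemma \ref{thm4w98}) on $B_\delta(\bar x)\times[0,2]$ to get the quantitative bound $\psi_\la(\bar x)\geq e^{-m}\varepsilon\delta^{2s}>0$. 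You instead argue by contradiction at a touching point: for $\psi_\la\geq 0$ antisymmetric with $\psi_\la(x^0)=0$, the reflection identity gives $(-\lap)^s\psi_\la(x^0)=-C_{N,s}\,\PV\int_{\Sigma_\la}\psi_\la(y)\bigl(|x^0-y|^{-N-2s}-|x^0-y^\la|^{-N-2s}\bigr)dy\leq 0$ with equality only if $\psi_\la\equiv 0$, while the limiting equation along time translates, evaluated at the space--time minimum $(x^0,0)$ of $V\geq 0$, forces $(-\lap)^s\psi_\la(x^0)=0$; this is the elliptic-style strong maximum principle for antisymmetric functions transplanted to the $\omega$-limit, and it dispenses with the subsolution construction and the auxiliary estimate $(-\Delta)^s g\leq C_0$ (Lemma \ref{mlem1}) altogether, at the price of having to evaluate $(-\lap)^s$ of the limit pointwise at $x^0$ and to pass the equation to the limit there, whereas the paper's comparison argument only ever uses the equation through the maximum principle. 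One caveat: with $c_\la$ merely bounded, interior estimates yield H\"older (not $C^{1,1}_{loc}$) regularity of the limit --- the paper itself only asserts H\"older continuity of $w_\infty$ --- so the finiteness of $(-\lap)^s\psi_\la(x^0)$, the convergence of the nonlocal term, and the existence of the limiting coefficient $\bar c$ all rest on the same uniform-regularity input that the paper defers to \cite{XFR}; your time-integrated variant removes the need for $\bar c$ but not the regularity requirement, so this step should be phrased as an appeal to the uniform interior estimates of \cite{XFR} rather than to a $C^{1,1}_{loc}$ bound. With that adjustment the proof goes through and is, if anything, shorter than the paper's; what the paper's construction buys is a quantitative lower bound at every point obtained without ever computing the operator on the limit function at a single point.
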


\begin{remark}\label{51}
As we can see from the proof of Theorem \ref{thmj2} later, the conclusion of Theorem \ref{thmj2} still holds when   $\Sigma_\la$ is replaced by any bounded domain in $\Sigma_\la$.
\end{remark}

\section{Proofs of key principles }
\label{s:maximum-principle}
In this section, we present the proofs of the key principles which will be used in establishing Theorems \ref {thmj5} and \ref {thmj6}.
For reader's convenience, we restate these theorems before their proofs.

\begin{theorem}\label{thmj1a}(Asymptotic narrow region principle)
Let $\Omega$ be a region containing in the narrow slab $$\{x \mid \lambda-l<x_1<\lambda\}\subset \Sigma_\lambda$$  with some small $l$.

For $\bar{t}$ sufficiently large, assume that $ {w_\lambda(x,t)} \in (C^{1,1}_{loc}(\Omega) \cap {\mathcal L}_{2s}) \times C^1([\bar{t} ,\infty))$ is uniformly bounded and lower semi-continuous in $x$ on $\bar{\Omega}$, satisfying
\begin{equation}\label{equationwa}
\left\{
\begin{array}{ll}
    \frac{\partial w_\lambda}{\partial t}(x,t)+(-\triangle)^sw_\lambda(x,t)=c_\lambda(x,t) w_\lambda(x,t) ,~   &(x,t) \in  \Omega\times[\bar{t} ,\infty)  , \\
  w_\lambda(x,t)\geq 0 , ~ &(x,t)  \in (\Sigma_\lambda  \backslash \Omega) \times[ \bar t,\infty), \\
  w_\lambda(x,t)=- w_\lambda(x^\lambda,t), &(x,t)     \in \Omega\times[\bar{t} ,\infty) ,
\end{array}
\right.
\end{equation}
where $c_\lambda(x,t)$ is bounded from above,  then the following statements hold:
\begin{enumerate}
\item[(\textbf{i})]   if $\Omega$ is bounded, then  for sufficiently small $l$,
\begin{equation}\label{eq:mb4a}
\underset{t \rightarrow \infty}{\underline{\lim}}w_\lambda(x,t)\geq0 , \forall~ x\in \Omega;
\end{equation}
\item[(\textbf{ii})]  if $\Omega$ is unbounded, then the conclusion \eqref{eq:mb4a} still holds under the condition
\begin{equation}\label{eq:mbbb4a}
\underset{|x|\rightarrow \infty}{\underline{\lim}}~w_\lambda(x,t)\geq 0, ~ \mbox{uniformly for} ~  t\geq \bar t .
\end{equation}
\end{enumerate}
\end{theorem}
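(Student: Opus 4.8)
The plan is to argue by contradiction, combining a rescaled antisymmetric barrier in the $x_1$-variable with the $\omega$-limit machinery, so that the conclusion for $w_\lambda$ is first reduced to a statement about $\psi_\lambda \in \omega(w_\lambda)$ and then derived from a maximum-principle estimate in the narrow slab. Suppose \eqref{eq:mb4a} fails at some $x^0 \in \Omega$, i.e. $\underline{\lim}_{t\to\infty} w_\lambda(x^0,t) < 0$. Passing to a sequence $t_k \to \infty$ realizing this lower limit and using the uniform bounds together with standard parabolic estimates (as already invoked in the introduction for $\omega(u)$), the translates $w_\lambda(\cdot, t + t_k)$ converge locally to a limit profile; evaluating at $t = 0$ gives an antisymmetric $\psi_\lambda$ defined on $\Sigma_\lambda$, lower semicontinuous on $\bar\Omega$, with $\psi_\lambda(x^0) < 0$, with $\psi_\lambda \ge 0$ on $\Sigma_\lambda \setminus \Omega$, and satisfying in the limit an inequality of the form $(-\triangle)^s \psi_\lambda \le \bar c\, \psi_\lambda$ at points where $\psi_\lambda$ is negative, where $\bar c$ is the (finite) upper bound on $c_\lambda$. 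In case (ii) the hypothesis \eqref{eq:mbbb4a} guarantees $\psi_\lambda \ge 0$ near infinity, so in both cases the negative infimum of $\psi_\lambda$ on $\bar\Omega$ is attained at some interior point $\bar x \in \Omega$, with $m := \psi_\lambda(\bar x) < 0$.

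Next I would extract the contradiction at $\bar x$ by exploiting antisymmetry to split the nonlocal integral. Write
\begin{equation*}
(-\triangle)^s \psi_\lambda(\bar x) = C_{N,s}\,\mathrm{P.V.}\int_{\Sigma_\lambda}\Big(\frac{1}{|\bar x - y|^{N+2s}} - \frac{1}{|\bar x - y^\lambda|^{N+2s}}\Big)\big(\psi_\lambda(\bar x) - \psi_\lambda(y)\big)\,dy + \text{(antisymmetric cross term)},
\end{equation*}
and use the elementary fact that for $\bar x, y \in \Sigma_\lambda$ one has $|\bar x - y| < |\bar x - y^\lambda|$, so the kernel in the first integral is positive. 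Since $\psi_\lambda(\bar x) \le \psi_\lambda(y)$ for $y \in \Sigma_\lambda$ and $\psi_\lambda(\bar x) < 0 \le \psi_\lambda(y)$ for $y \in \Sigma_\lambda \setminus \Omega$, this forces $(-\triangle)^s \psi_\lambda(\bar x) \le 0$, and in fact $(-\triangle)^s \psi_\lambda(\bar x) \le -c\,|m|/l^{2s}$ for a dimensional constant $c>0$, the gain coming from integrating the positive kernel against $|m|$ over the portion of $\Sigma_\lambda \setminus \Omega$ lying within bounded distance on the far side of $T_\lambda$, whose measure is bounded below once $l$ is small. On the other hand the limiting equation gives $(-\triangle)^s \psi_\lambda(\bar x) \le \bar c\, m$ (if $\bar c \ge 0$; if $\bar c < 0$ one gets an even better sign). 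Combining, $-c\,|m|/l^{2s} \ge \bar c\, m = -\bar c\,|m|$ cannot hold once $l$ is chosen small enough that $c/l^{2s} > |\bar c|$, contradicting $m < 0$. In case (ii) the replacement of the narrowness gain by the sign condition $\bar c \to -\sigma$ is exactly the content of Theorem \ref{1}; here, with $\Omega$ genuinely narrow, the gain $c/l^{2s}$ does the job for any bounded-above $c_\lambda$.

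The step I expect to be the main obstacle is the passage to the limit producing $\psi_\lambda$ with the correct inequality at points of negativity: one must justify that the local parabolic compactness upgrades convergence of $w_\lambda(\cdot, \cdot + t_k)$ to something strong enough that $(-\triangle)^s$ and $\partial_t$ pass to the limit, that the tail bound built into ${\mathcal L}_{2s}$ is preserved uniformly in $k$ so that the nonlocal operator is continuous under the limit, and that the time-derivative term $\partial_t \psi_\lambda$ vanishes (or has a favorable sign) at the minimum in the limiting profile — this last point is where the "asymptotic" nature of the statement is essential, since one only controls $\underline\lim$ rather than the instantaneous value. I would handle this by choosing $t_k$ so that $w_\lambda(x^0, t_k) \to \underline\lim$ and, along a further subsequence, so that the time-translated functions converge in $C^{1}_{t}C^{2s+\epsilon}_{x,loc}$; the interior minimum $\bar x$ of the limit then satisfies $\partial_t \psi_\lambda(\bar x, 0) \le 0$ does not immediately follow, so instead I would argue at a near-minimizing time for the prelimit $w_\lambda$ and track the error, exactly as one does for the classical Hess–Poláčik argument, then send $k \to \infty$. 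Once this technical limiting lemma is in place, the slab estimate above closes the argument for both (i) and (ii).
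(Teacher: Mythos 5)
Your proposal has a genuine gap at exactly the point you flag as the main obstacle, and the fix you sketch does not close it. The reduction to a time-independent profile $\psi_\lambda$ satisfying an elliptic inequality $(-\triangle)^s\psi_\lambda\le \bar c\,\psi_\lambda$ at points of negativity is not available under the hypotheses of the theorem: $w_\lambda$ is only assumed uniformly bounded, lower semi-continuous in $x$, and $C^1$ in $t$, and $c_\lambda$ is only bounded \emph{from above} (no H\"older or two-sided bound), so the parabolic regularity/compactness needed to pass $\partial_t$ and $(-\triangle)^s$ to a limit along $t+t_k$ is simply not granted. Even if a subsequential limit existed, it would in general still be a genuinely time-dependent solution of a limiting \emph{parabolic} equation, and at a spatial minimum of its $t=0$ slice the term $\partial_t$ has no sign; discarding it is essentially assuming the asymptotic statement you are trying to prove. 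Your proposed repair (arguing at near-minimizing times for the prelimit, Hess--Pol\'a\v{c}ik style) runs into the real difficulty of this theorem: for the prelimit $w_\lambda$, the space-time minimum over $\Omega\times[\bar t,T]$ may sit at the initial time $t=\bar t$, where nothing is assumed (the data at $\bar t$ may be very negative), so no contradiction can be extracted there; the conclusion is only asymptotic precisely for this reason.

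The paper's proof avoids all of this with an elementary device you are missing: set $\tilde w_\lambda=e^{mt}w_\lambda$ with $m\sim C/l^{2s}$, and show by the maximum-principle argument at a space-time minimum $(x_0,t_0)$ with $t_0>\bar t$ (where $\partial_t\tilde w_\lambda\le 0$ and, by antisymmetry and the narrow-slab estimate, $(-\triangle)^s\tilde w_\lambda(x_0,t_0)\le \frac{C}{l^{2s}}\tilde w_\lambda(x_0,t_0)$) that
$$\tilde w_\lambda(x,t)\ \ge\ \min\{0,\ \inf_\Omega \tilde w_\lambda(x,\bar t)\}\ \ge\ -C_1 \quad\text{on }\Omega\times[\bar t,T]\ \text{ for every }T>\bar t,$$
which upon undoing the weight gives $w_\lambda(x,t)\ge -C_1 e^{-mt}\to 0$, i.e.\ the asymptotic conclusion \eqref{eq:mb4a}, with case (\textbf{ii}) handled by noting that \eqref{eq:mbbb4a} forces the negative minimum to be attained at a finite point. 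Your spatial narrow-region computation (antisymmetry plus $|x-y|<|x-y^\lambda|$ yielding the gain $C/l^{2s}$) is the right ingredient and matches the paper's inequality, though your description of where the gain comes from is imprecise (it comes from the reflected half-space $\Sigma_\lambda$ at distance comparable to $l$, not from a portion of $\Sigma_\lambda\setminus\Omega$); but without the exponential weight and the direct space-time minimum argument on the prelimit function, your argument as written does not prove the theorem.
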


\begin{proof}
[\bf Proof.]
($\textbf{i}$)
Let $m$ be a fixed positive constant to be chosen later. Set
$$\tilde{w}_\lambda(x,t)=e^{mt}w_\lambda(x,t),$$
then $\tilde{w}_\lambda(x,t)$ satisfies
\begin{equation}\label{ww}\aligned
\frac{\partial \tilde w_\lambda}{\partial t}(x,t)+(-\lap)^s \tilde w_\lambda(x,t)
&=\big(m+c_\la(x,t)\big) \tilde w_\lambda (x,t),~(x,t) \in \Omega \times[\bar{t} ,\infty).
\endaligned\end{equation}
We will establish \eqref{eq:mb4a} by proving, for any $T> \bar{t}$,
\begin{equation}\label{4}
\tilde w_\la (x,t) \geq \min\{0, \underset{  \Omega}{\inf}\tilde w_\la(x,\bar t) \},  ~(x,t) \in \Omega \times [\bar t,T].
\end{equation}

If (\ref{4}) is not true, by \eqref{equationwa} and the lower semi-continuity of $w_\lambda$ on $\bar {\Omega} \times [\bar t,T]$, there exists $(x_0, t_0)\in\Omega \times(\bar t,T]$ such that
\be\label{6}\tilde{w}_\lambda(x_0, t_0)=\min\limits_{\Sigma_\lambda \times(\bar t,T]}\tilde{w}_\lambda(x,t)<\min\{0,\inf_{\Omega}\tilde{w}_\lambda(x, \bar t)\},
\ee
then we have
\be \label{cwc20201}
\frac{\partial \tilde{w}_\lambda}{\partial t}(x_0, t_0)\leq 0
\ee
and
\begin{equation}
\label{laplace}
\begin{aligned}
&(-\triangle)^s\tilde{w}_\lambda(x_0, t_0)\\
=&C_{N,s}P.V.\int_{\Sigma_\lambda}\frac{\tilde{w}_\lambda(x_0, t_0)-\tilde{w}_\lambda(y, t_0)}{|x_0-y|^{N+2s}}dy
+C_{N,s}\int_{\mathbb R^N\backslash\Sigma_\lambda}\frac{\tilde{w}_\lambda(x_0, t_0)-\tilde{w}_\lambda(y, t_0)}{|x_0-y|^{N+2s}}dy\\
=&C_{N,s}P.V.\int_{\Sigma_\lambda}\frac{\tilde{w}_\lambda(x_0, t_0)-\tilde{w}_\lambda(y, t_0)}{|x_0-y|^{N+2s}}dy
+C_{N,s}\int_{\Sigma_\lambda}\frac{\tilde{w}_\lambda(x_0, t_0)-\tilde{w}_\lambda(y^\lambda, t_0)}{|x_0-y^\lambda|^{N+2s}}dy\\
\leq &C_{N,s}\int_{\Sigma_\lambda}\frac{2\tilde{w}_\lambda(x_0, t_0)}{|x_0-y^\lambda|^{N+2s}}dy=\frac{C}{l^{2s}}\tilde{w}_\lambda(x_0, t_0),
\end{aligned}
\end{equation}
where the last inequality holds because of
$|x_0 -y|<|x_0 -y^\lambda |$ in $\Sigma_\lambda$ and
if we set $$H=\big\{y=(y_1,y')\in\Sigma_\lambda\big|l<y_1-(x_0)_1<2l,\ |y'-x'_0|<l\big\},$$ then
$$
\int_{\Sigma_\lambda}\frac{1}{|x_0-y^\lambda|^{N+2s}}dy\geq\int_{H}\frac{1}{|x_0-y^\lambda|^{N+2s}}dy\geq\frac{C}{l^{N+2s}}|H|=\frac{C}{l^{2s}}.$$

As a result, we rewrite (\ref{ww}) and   obtain
\begin{equation}
\label{www}
\begin{aligned}
 \frac{\partial \tilde{w}_\lambda}{\partial t}(x_0,t_0)=&-(-\triangle)^s\tilde{w}_\lambda(x_0,t_0)+\left(m+c_\lambda(x_0,t_0)\right)\tilde{w}_\lambda(x_0,t_0)\\
\geq &\big(-\frac{C}{l^{2s}}+m+c_\lambda(x_0,t_0)\big)\tilde{w}_\lambda(x_0, t_0).
\end{aligned}
\end{equation}
Since $c_\la(x,t)$ is bounded from above for all $(x,t)$, we can first choose  $l$ small such that $$-\frac{C}{l^{2s}}+c_\lambda(x_0,t_0)<-\frac{C}{2l^{2s}}$$ and then take $m=\frac{C}{2l^{2s}}>0$
to derive that the right hand side of (\ref{www}) is strictly greater than $0$ since $\tilde{w}_\lambda(x_0, t_0)
<0.$ This contradicts \eqref{cwc20201}.

 Thus, by  the boundedness of $w_\lambda$,   there exists $C_1>0$ such that
\be\label{31}
\tilde{w}_\lambda(x, t)\geq \min\{0,\inf_{\Omega}\tilde{w}_\lambda(x, \bar t)\}\geq -C_1 , ~(x,t) \in  \Omega \times[\bar t,T], \; \forall \;T > \bar{t}.
\ee
Consequently, $$w_\lambda (x,t) \geq e^{-mt}(-C_1), \;\;  \forall \;t > \bar{t}.$$
Letting $t \ra \infty$, we arrive at
$$ \underset{t\rightarrow \infty}{\underline{\lim}}w_\lambda (x,t)\geq 0,~ x \in\Omega.$$

($\textbf{ii}$) When $\Omega$ is unbounded, \eqref{eq:mbbb4a} guarantees that the negative minimum of $\tilde w_\lambda(x,t)$  in   $\Sigma_\lambda \times [\bar t,T]$  must be attained at some finite point. Then one can follow the same discussion as the case of ($\textbf{i}$) to arrive at a contradiction.

This complete the proof of Theorem \ref{thmj1a}.
\end{proof}

\begin{theorem}\label{1a} (Asymptotic maximum principle near infinity)
Let $\Omega$ be an unbounded domain in $\Sigma_\lambda$.

For $\bar{t}$ sufficiently large, assume $w_\lambda\in (C^{1,1}_{loc}(\Omega)\cap {\mathcal L}_{2s} )\times C^1([\bar{t},\infty))$ is uniformly bounded and lower semi-continuous in $x$ on  $\bar{\Omega}$, satisfying
\begin{equation*}
\left\{
\begin{array}{ll}
    \frac{\partial w_\lambda}{\partial t}(x,t)+(-\triangle)^sw_\lambda(x,t)=c_\lambda(x,t)w_\lambda(x,t) ,~&   (x,t) \in  \Omega\times[\bar{t},\infty)  , \\
  w_\lambda(x,t)\geq 0 , &(x,t) \in  (\Sigma_\lambda  \backslash \Omega)\times[\bar{t},\infty),\\
  w_\lambda(x,t)=- w_\lambda(x^\lambda,t), &(x,t)\in \Omega\times[\bar{t},\infty) ,
\end{array}
\right.
\end{equation*}
Further assume that there is a constant $\sigma>0$ such that $c_\lambda(x,t)<-\sigma$ at the points in $\Omega\times[\bar{t},\infty)$ where $w_\lambda(x,t)<0$ and

\begin{equation*}
\underset{|x|\rightarrow \infty}{\underline{\lim}}~w_\lambda(x,t)\geq0,\ \text{ uniformly for } t\geqslant \bar{t}.
\end{equation*}

Then
\begin{equation*}
\underset{t \rightarrow \infty}{\underline{\lim}}w_\lambda(x,t)\geq0 , \forall~ x\in \Omega.
\end{equation*}
\end{theorem}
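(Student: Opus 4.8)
The plan is to adapt the argument of part (\textbf{i}) of Theorem \ref{thmj1a}, replacing the role played there by the smallness of the slab width $l$ with the role played here by the strict negativity $c_\lambda(x,t) < -\sigma$ at negative points of $w_\lambda$. First I would perform the same exponential change of variable, setting $\tilde{w}_\lambda(x,t) = e^{mt} w_\lambda(x,t)$ for a fixed $m \in (0,\sigma)$ to be specified, so that $\tilde{w}_\lambda$ satisfies
\begin{equation*}
\frac{\partial \tilde{w}_\lambda}{\partial t}(x,t) + (-\lap)^s \tilde{w}_\lambda(x,t) = \big(m + c_\lambda(x,t)\big)\tilde{w}_\lambda(x,t), \quad (x,t) \in \Omega \times [\bar{t},\infty).
\end{equation*}
As in part (\textbf{i}), I would aim to prove that for every $T > \bar{t}$,
\begin{equation*}
\tilde{w}_\lambda(x,t) \geq \min\Big\{0,\ \inf_{\Omega}\tilde{w}_\lambda(\cdot,\bar{t})\Big\}, \qquad (x,t) \in \Omega \times [\bar{t},T],
\end{equation*}
and then let $t \to \infty$ using the uniform boundedness of $w_\lambda$ exactly as in the earlier argument.

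For the contradiction argument, suppose the displayed lower bound fails. Since $\Omega$ is now unbounded, the new hypothesis $\underline{\lim}_{|x|\to\infty} w_\lambda(x,t) \geq 0$ uniformly in $t$ (hence the same for $\tilde{w}_\lambda$ after multiplying by the bounded factor $e^{mt}$ on the compact time interval $[\bar{t},T]$) is what guarantees that a negative infimum of $\tilde{w}_\lambda$ over $\Sigma_\lambda \times [\bar{t},T]$ is actually attained at some finite interior point $(x_0,t_0) \in \Omega \times (\bar{t},T]$ — this is exactly the point invoked at the end of part (\textbf{ii}) of the previous theorem. At such a minimum point $\tilde{w}_\lambda(x_0,t_0) < 0$, so $\frac{\partial \tilde{w}_\lambda}{\partial t}(x_0,t_0) \leq 0$, and using the antisymmetry decomposition of the fractional Laplacian over $\Sigma_\lambda$ together with $|x_0 - y| < |x_0 - y^\lambda|$ for $y \in \Sigma_\lambda$, one gets $(-\lap)^s \tilde{w}_\lambda(x_0,t_0) \leq 0$ (here we keep only the trivial sign, not the quantitative $l^{-2s}$ bound). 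Plugging into the equation,
\begin{equation*}
0 \geq \frac{\partial \tilde{w}_\lambda}{\partial t}(x_0,t_0) = -(-\lap)^s\tilde{w}_\lambda(x_0,t_0) + \big(m + c_\lambda(x_0,t_0)\big)\tilde{w}_\lambda(x_0,t_0) \geq \big(m + c_\lambda(x_0,t_0)\big)\tilde{w}_\lambda(x_0,t_0).
\end{equation*}
Since $w_\lambda(x_0,t_0) < 0$, the hypothesis forces $c_\lambda(x_0,t_0) < -\sigma$, so $m + c_\lambda(x_0,t_0) < m - \sigma < 0$; combined with $\tilde{w}_\lambda(x_0,t_0) < 0$ the right-hand side is strictly positive, contradicting the inequality. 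This yields the desired lower bound, and then $w_\lambda(x,t) \geq e^{-mt}\min\{0,\inf_\Omega \tilde{w}_\lambda(\cdot,\bar{t})\} \geq -C_1 e^{-mt}$, so $\underline{\lim}_{t\to\infty} w_\lambda(x,t) \geq 0$ for all $x \in \Omega$.

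The step I expect to require the most care is justifying that the negative infimum is attained at a finite point in $\Omega$ rather than escaping to spatial infinity or sitting on the boundary: one must combine the uniform-in-$t$ decay hypothesis with the fact that on $(\Sigma_\lambda \setminus \Omega) \times [\bar{t},T]$ we have $w_\lambda \geq 0 > \tilde{w}_\lambda(x_0,t_0)$ and with lower semicontinuity in $x$ on $\bar{\Omega}$, to exclude the boundary and the behavior near infinity and thereby land at an interior minimum where the pointwise differential inequalities above are valid. The subtlety compared to part (\textbf{ii}) of Theorem \ref{thmj1a} is only that there the slab was narrow whereas here $\Omega$ may be a genuinely large unbounded set, but the decay condition at infinity plays precisely the compensating role, so the same reasoning goes through. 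Everything else — the exponential substitution, the antisymmetric splitting of the integral, and the passage $t \to \infty$ — is identical to the proof already given and needs no modification.
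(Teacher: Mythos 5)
Your proposal is correct and follows essentially the same route as the paper: the same exponential substitution $\tilde w_\lambda=e^{mt}w_\lambda$ with $m<\sigma$ (the paper takes $m=\sigma/2$), the same use of the uniform decay at infinity to locate a finite interior negative minimum, the same antisymmetric splitting giving $(-\lap)^s\tilde w_\lambda(x_0,t_0)\le 0$ without the quantitative $l^{-2s}$ bound, and the same contradiction via $(m+c_\lambda)\tilde w_\lambda>0$ against $\partial_t\tilde w_\lambda(x_0,t_0)\le 0$. No gaps to flag.
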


\begin{proof}
[\bf Proof.]

It is mostly similar to the proof of part (\textbf{ii}) in Theorem \ref{thmj1a} , here we only point out the differences.

By (\ref{laplace}), we have $$
(-\triangle)^s\tilde{w}_\lambda(x_0, t_0)\leq C_{N,s}\int_{\Sigma_\lambda}\frac{2\tilde{w}_\lambda(x_0, t_0)}{|x_0-y^\lambda|^{N+2s}}dy<0.
$$
Then analogous to (\ref{www}), we have
\begin{equation}
\label{7}
\begin{aligned}
 \frac{\partial \tilde{w}_\lambda}{\partial t}(x_0,t_0)=&-(-\triangle)^s\tilde{w}_\lambda(x_0,t_0)+\big(m+c_\lambda(x_0,t_0)\big)\tilde{w}_\lambda(x_0,t_0)\\
>&\big(m+c_\lambda(x_0,t_0)\big)\tilde{w}_\lambda(x_0, t_0).
\end{aligned}
\end{equation}
 Choosing $m=\frac{1}{2}\sigma>0$, then by $c_\lambda(x_0,t_0)<-\sigma$ and $\tilde{w}_\lambda(x_0, t_0)<0$, we have
\be\label{8}
\big(m+c_\lambda(x_0,t_0)\big)\tilde{w}_\lambda(x_0, t_0)>-\frac{1}{2}\sigma\tilde{w}_\lambda(x_0, t_0)>0.
\ee
Combining \eqref{7}, \eqref{8} with \eqref{cwc20201}, we obtain a contradiction.
\end{proof}

As we mentioned before, Theorems \ref{thmj1a} and \ref{1a} will provide us with a starting position in the first step to move the plane.
Then in the second step, we will need the following

\begin{theorem}\label{41a} (Asymptotic maximum principle in the union of a narrow region and the neighborhood of infinity)
Let $$
\Omega=\{x\in\Sigma_\lambda \mid dist\ (x, T_\lambda)<l\ \text{or } |x|>R\},
$$
where $l>0$ small and $R>0$ large.
Suppose $w_\lambda(x,t)$ satisfies the same conditions in Theorem \ref{1a}. Assume that $c_\lambda(x,t)$  is bounded from above in $\{x\in\Sigma_\lambda\mid dist\ (x, T_\lambda)<l\}$ and there exists $\sigma>0$ such that
$$c_\lambda(x,t)<-\sigma \; \mbox{ in } \{x\in\Sigma_\lambda \mid |x|>R\}.$$

Then we have
\begin{equation}\label{42a}
\underset{t \rightarrow \infty}{\underline{\lim}}w_\lambda(x,t)\geq0 , \forall~ x\in \Omega.\end{equation}
\end{theorem}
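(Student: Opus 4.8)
The plan is to combine the two mechanisms already established: the "narrow-region" effect near $T_\lambda$ (which supplies a quantitative lower bound on $(-\lap)^s w_\lambda$ at an interior negative minimum, as in \eqref{laplace}) and the "sign of $c_\lambda$" effect far out (which gives the desired inequality $(m+c_\lambda)\tilde w_\lambda>0$ whenever $w_\lambda<0$, as in \eqref{8}). As before, set $\tilde w_\lambda(x,t)=e^{mt}w_\lambda(x,t)$, so that $\tilde w_\lambda$ solves $\partial_t\tilde w_\lambda+(-\lap)^s\tilde w_\lambda=(m+c_\lambda)\tilde w_\lambda$ on $\Omega\times[\bar t,\infty)$. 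For an arbitrary $T>\bar t$ I would argue by contradiction, assuming that the negative minimum of $\tilde w_\lambda$ over $\Sigma_\lambda\times(\bar t,T]$ is strictly below $\min\{0,\inf_\Omega\tilde w_\lambda(\cdot,\bar t)\}$; using the hypothesis $\varliminf_{|x|\to\infty}w_\lambda(x,t)\ge 0$ uniformly in $t$ (so that $\tilde w_\lambda$ cannot become very negative near spatial infinity for $t$ bounded away from $0$, once $m$ is fixed — note $e^{mt}$ is bounded on $[\bar t,T]$), this negative minimum is attained at some finite point $(x_0,t_0)$ with $x_0\in\Omega$, $t_0\in(\bar t,T]$. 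At $(x_0,t_0)$ we have $\partial_t\tilde w_\lambda(x_0,t_0)\le 0$, and, exactly as in \eqref{laplace}, the anti-symmetry and $w_\lambda\ge 0$ on $\Sigma_\lambda\setminus\Omega$ give
$$(-\lap)^s\tilde w_\lambda(x_0,t_0)\le C_{N,s}\int_{\Sigma_\lambda}\frac{2\tilde w_\lambda(x_0,t_0)}{|x_0-y^\lambda|^{N+2s}}\,dy<0.$$

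Now the argument splits according to which piece of $\Omega$ contains $x_0$. If $x_0$ lies in the narrow slab $\{x\in\Sigma_\lambda\mid \dist(x,T_\lambda)<l\}$, then building the comparison set $H$ as in the proof of Theorem \ref{thmj1a}(\textbf{i}) yields $(-\lap)^s\tilde w_\lambda(x_0,t_0)\le \frac{C}{l^{2s}}\tilde w_\lambda(x_0,t_0)$, and since $c_\lambda$ is bounded above on this slab we may, after fixing $l$ small, choose $m=\frac{C}{2l^{2s}}$ so that $-\frac{C}{l^{2s}}+m+c_\lambda(x_0,t_0)<0$; together with $\tilde w_\lambda(x_0,t_0)<0$ this forces $\partial_t\tilde w_\lambda(x_0,t_0)>0$, a contradiction. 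If instead $|x_0|>R$, then we are in the regime of Theorem \ref{1a}: we only use $(-\lap)^s\tilde w_\lambda(x_0,t_0)<0$ and $c_\lambda(x_0,t_0)<-\sigma$, and choosing $m=\frac12\sigma$ gives $\partial_t\tilde w_\lambda(x_0,t_0)\ge(m+c_\lambda(x_0,t_0))\tilde w_\lambda(x_0,t_0)>-\frac12\sigma\tilde w_\lambda(x_0,t_0)>0$, again contradicting $\partial_t\tilde w_\lambda(x_0,t_0)\le 0$.

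The only subtlety is that the two cases seem to demand two different values of $m$. I would resolve this by fixing $l$ small first (so that $-\frac{C}{l^{2s}}+\sup c_\lambda<-\frac{C}{2l^{2s}}$ on the slab), and then taking $m:=\max\{\frac{C}{2l^{2s}},\frac{\sigma}{2}\}$: enlarging $m$ only strengthens the inequality $-\frac{C}{l^{2s}}+m+c_\lambda>0$ in the first case (since the coefficient of $m$ there is $+1$) and likewise strengthens $m+c_\lambda>\frac{\sigma}{2}>0$ in the second case, so the same $m$ works in both. With the contradiction in hand, \eqref{4} holds for this $m$ and every $T>\bar t$, hence by boundedness of $w_\lambda$ there is $C_1>0$ with $\tilde w_\lambda(x,t)\ge -C_1$ on $\Omega\times[\bar t,\infty)$, i.e. $w_\lambda(x,t)\ge -C_1e^{-mt}$; letting $t\to\infty$ gives \eqref{42a}. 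The main obstacle, such as it is, is precisely this bookkeeping of constants (choosing $l$ before $m$, and verifying one $m$ serves both regions) together with the standard check that the uniform decay hypothesis near infinity really does localize the negative minimum to a finite point for each fixed finite horizon $T$.
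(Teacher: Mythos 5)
Your overall strategy is exactly the paper's: set $\tilde w_\lambda=e^{mt}w_\lambda$, locate a negative minimum at a finite point $(x_0,t_0)\in\Omega\times(\bar t,T]$ (the uniform decay hypothesis rules out escape to spatial infinity), and split according to whether $x_0$ lies in the narrow slab or in $\{|x|>R\}$, running the argument of Theorem \ref{thmj1a} in the first case and that of Theorem \ref{1a} in the second; your two case computations are correct and this is precisely how the paper argues. The genuine problem is your reconciliation of the constant $m$, which you rightly identify as the only subtlety but then resolve with inverted inequalities. At the minimum one needs the zeroth-order coefficient, multiplied by $\tilde w_\lambda(x_0,t_0)<0$, to come out positive; that is, in the slab case one needs $-\frac{C}{l^{2s}}+m+c_\lambda(x_0,t_0)<0$ and in the far-field case $m+c_\lambda(x_0,t_0)<0$. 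Hence enlarging $m$ works \emph{against} you in both cases, not for you: the inequalities you invoke, namely $-\frac{C}{l^{2s}}+m+c_\lambda>0$ and $m+c_\lambda>\frac{\sigma}{2}>0$, are the opposite of what the contradiction requires. In particular the choice $m:=\max\{\frac{C}{2l^{2s}},\frac{\sigma}{2}\}$ fails: for small $l$ this maximum equals $\frac{C}{2l^{2s}}$, and at a far-field minimum you only know $c_\lambda(x_0,t_0)<-\sigma$, so $m+c_\lambda(x_0,t_0)$ may be positive and no contradiction results there.

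The fix is the opposite choice: take the \emph{small} value $m=\frac{\sigma}{2}$ once and for all. Then in the far-field case $m+c_\lambda(x_0,t_0)<-\frac{\sigma}{2}<0$, exactly as in Theorem \ref{1a}; and in the slab case, since $c_\lambda$ is bounded above there by some constant $M_0$, the needed inequality $-\frac{C}{l^{2s}}+\frac{\sigma}{2}+c_\lambda<0$ holds as soon as $\frac{C}{l^{2s}}>M_0+\frac{\sigma}{2}$, which is guaranteed after shrinking $l$ (permitted by the statement, and necessarily done \emph{before} fixing $m$, since the location of the negative minimum of $\tilde w_\lambda$ depends on $m$). With this one-line correction your argument coincides with the paper's proof, which simply combines the two earlier proofs case by case and does not itself address the issue of a common $m$.
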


\begin{proof}[\bf Proof.]

It  can be obtained by a combination of the proofs of Theorem \ref{thmj1a} and Theorem \ref{1a} as the following.

For the minimum point $(x_0, t_0)\in\Omega \times(\bar t,T]$ in \eqref{6}, if $x_0\in\{x\in\Sigma_\lambda\mid dist\ (x, T_\lambda)<l\}$, then we proceed as in the proof of Theorem \ref{thmj1a} to obtain a contradiction; if $x_0\in\{x\in\Sigma_\lambda\mid |x|>R\}$, then by a similar process as in the proof of Theorem \ref{1a}, we can also obtain a contradiction and thus arrive at \eqref{42a}.
\end{proof}

\begin{theorem}\label{thmwj2a} (Asymptotic strong maximum principle)
Assume $(F)$ holds. Suppose that $$u(x,t)\in (C^{1,1}_{loc}(\Omega)\cap {\mathcal L}_{2s} )\times C^1((0,\infty))$$  is a  positive uniformly bounded solution to
\begin{equation*}
\frac{\partial u}{\partial t}+(-\lap)^su=f(t,u),    (x,t)\in \Omega \times (0,\infty)
\end{equation*}
with \eqref{eq:jj200} holds,
where $\Omega\subset \mathbb R^N$ is either a bounded or an unbounded  domain.

Assume that there is some $\varphi \in \omega(u)$ which is positive somewhere in $\Omega$.
Then
$$\varphi(x)>0 \;  \mbox{ everywhere in } \; \Omega \; \mbox{  for all } \varphi \in \omega(u).$$

In other words, the following strong maximum principle holds for the whole family of functions $\{ \varphi \mid \varphi \in \omega(u)\}$ simultaneously:

Either
$$\varphi(x)\equiv 0 \;  \mbox{ everywhere in } \; \Omega \; \mbox{  for all } \varphi \in \omega(u),$$
or
$$\varphi(x)>0 \;  \mbox{ everywhere in } \; \Omega \; \mbox{  for all } \varphi \in \omega(u).$$

\end{theorem}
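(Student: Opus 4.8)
The plan is to reduce the assertion to a clean dichotomy for the solution $u$ itself: either $u(\cdot,t)\to 0$ uniformly, so that $\omega(u)=\{0\}$ and all $\varphi$ are identically $0$; or $u$ stays uniformly away from $0$ in sup-norm, and then a propagation-of-positivity mechanism forces every $\varphi\in\omega(u)$ to be positive everywhere. First I would record an elementary consequence of $(F)$: since $f(t,0)=0$, $f_u(t,0)<-\sigma$ for $t>0$ and $f_u$ is continuous near $u=0$, there are $\delta_1>0$ and $\sigma'>0$ with
\[
f(t,u)\le-\sigma'u\qquad\text{for all }t>0,\ 0\le u\le\delta_1 .
\]
Then I would distinguish two exhaustive cases for the large-time behaviour of $u$: \textbf{(a)} there is a large $t_1$ with $\|u(\cdot,t_1)\|_{L^\infty(\Omega)}\le\delta_1$; or \textbf{(b)} $\|u(\cdot,t)\|_{L^\infty(\Omega)}>\delta_1$ for all large $t$. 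These two cases will correspond exactly to the two alternatives in the conclusion.

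In case \textbf{(a)}, the spatially constant function $\bar v(t):=\delta_1e^{-\sigma'(t-t_1)}$ is a supersolution, because $\partial_t\bar v+(-\Delta)^s\bar v=\bar v'=-\sigma'\bar v\ge f(t,\bar v)$ as long as $0<\bar v\le\delta_1$; moreover $u(\cdot,t_1)\le\delta_1=\bar v(t_1)$, and (for $\Omega=\mathbb R^N$ this uses \eqref{eq:jj200}; for bounded $\Omega$ it uses the Dirichlet condition $u\equiv0$ on $\Omega^c$ as in \eqref{eq:jj20}, so that $u\ge0$ on $\mathbb R^N$) a negative minimum of $u-\bar v$ over $\Omega\times[t_1,T]$ could only occur at a finite point with $t=t_1$. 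Hence, by the comparison principle for the associated linear equation obtained as in the proof of Theorem~\ref{thmj1a} (with the bounded zeroth-order term absorbed by a suitable exponential rescaling in time), $u(x,t)\le\bar v(t)$ for all $x\in\Omega$ and $t\ge t_1$; therefore $u(\cdot,t)\to0$ uniformly, $\omega(u)=\{0\}$, and every $\varphi\in\omega(u)$ is identically $0$ in $\Omega$.

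In case \textbf{(b)}, I would first localize: by \eqref{eq:jj200} there are $R>0$, $T_0>0$ with $u(x,t)\le\delta_1/2$ for $|x|\ge R$ and $t\ge T_0$ (when $\Omega$ is bounded, boundary regularity for the fractional Dirichlet problem analogously confines the bulk of $u$ to a fixed compact subset $\mathcal K_0\subset\subset\Omega$, uniformly in $t$), so for every $t\ge T_0$ there is a point $x_t$, lying in that fixed compact set, with $u(x_t,t)>\delta_1$. Writing the equation in linear form $\partial_t u+(-\Delta)^su=c(x,t)u$ with $c(x,t):=f(t,u(x,t))/u(x,t)$ bounded by the Lipschitz constant of $f$, and using $u\ge0$ on all of $\mathbb R^N$, I would then invoke the interior parabolic Harnack inequality for the nonlocal operator $\partial_t+(-\Delta)^s$ with bounded zeroth-order coefficient and iterate it along a Harnack chain joining $x_t$ to a prescribed point $x\in\Omega$: since both $x_t$ and $x$ lie in a fixed ball, the chain has bounded length and all the constants are uniform in $t$, so from $u(x_t,t)>\delta_1$ one obtains $\gamma_x>0$ and $0<\tau_*<\tau^*$, independent of $t$, with $u(x,\theta)\ge\gamma_x$ for all $\theta\in[t+\tau_*,t+\tau^*]$. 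Letting $t$ run over $[T_0,\infty)$ these windows cover all large $\theta$, hence $u(x,\theta)\ge\gamma_x$ for all large $\theta$ and $\liminf_{\theta\to\infty}u(x,\theta)\ge\gamma_x>0$. Consequently $\varphi(x)=\lim_k u(x,t_k)\ge\liminf_{\theta\to\infty}u(x,\theta)>0$ for every $\varphi\in\omega(u)$ and every $x\in\Omega$.

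Putting the two cases together gives the stated dichotomy; and if some $\varphi\in\omega(u)$ is positive somewhere in $\Omega$, then case \textbf{(a)} is impossible (it would force $\omega(u)=\{0\}$), so case \textbf{(b)} holds and $\varphi>0$ everywhere in $\Omega$ for all $\varphi\in\omega(u)$. The hard part is the positivity-propagation step in case \textbf{(b)}: it requires a parabolic Harnack inequality for $\partial_t+(-\Delta)^s$ with merely bounded zeroth-order coefficient, and an iteration whose constants are uniform in the base time and base point (in the bounded case one must also keep the Harnack cylinders inside $\Omega$, which is where the boundary regularity of the fractional Dirichlet problem enters). If one prefers to stay within the elementary framework of Section~\ref{s:maximum-principle}, this step can instead be done by constructing explicit bump-type subsolutions and applying the maximum principles already established, at the price of a longer but routine computation.
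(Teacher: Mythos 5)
Your first half (case \textbf{(a)}) is essentially the paper's own Lemma \ref{limit lemma}: the constant-in-$x$ exponential barrier $\bar v(t)=\delta_1e^{-\sigma'(t-t_1)}$ plays exactly the role of the ODE supersolution $\xi(t)$ there, and the comparison via Remark \ref{52}/Lemma \ref{3} is the same; this part is fine. Your second half (case \textbf{(b)}) departs from the paper, and this is where the real gap sits. The pivotal positivity-propagation step is delegated to ``the interior parabolic Harnack inequality for $\partial_t+(-\Delta)^s$ with bounded zeroth-order coefficient,'' which is neither proved in the paper nor precisely cited, and it cannot be used off the shelf as stated: (i) nonlocal Harnack inequalities require global nonnegativity of $u$ on all of $\mathbb{R}^N$ (true in the intended settings, but it must be said, and the zeroth-order term $c(x,t)u$ cannot be gauged away by a time exponential since $c$ depends on $x$ — one has to pass to $e^{\pm\|c\|_\infty t}u$ as a global super/subsolution of the pure fractional heat operator, or cite a result that genuinely allows the multiplicative term); (ii) a Harnack or weak-Harnack estimate takes an integral average or a cylinder supremum as input, so the single-point bound $u(x_t,t)>\delta_1$ must first be upgraded to a bound on a ball of uniform radius via uniform interior H\"older estimates (e.g.\ \cite{XFR}), a step you do not mention; (iii) in the bounded-domain case the assertion that boundary regularity ``confines the bulk of $u$ to a fixed compact subset uniformly in $t$'' is itself a nontrivial uniform-in-time boundary estimate that you assert without proof — without it the base points $x_t$ may drift to $\partial\Omega$ and the Harnack-chain constants degenerate.

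By contrast, the paper never estimates $u$ uniformly in time at all. It fixes $\varphi\in\omega(u)$, shifts time by $t_k$, uses parabolic regularity \cite{XFR} to pass to a limiting solution $u_\infty$ with $u_\infty(\cdot,1)=\varphi$ and a bounded linear coefficient, and then propagates positivity of $u_\infty$ from a compact set $D$ (where $\varphi\ge c_0$, courtesy of Lemma \ref{limit lemma}) to an arbitrary point $\bar x\in\Omega\setminus D$ by an explicit bump subsolution $\chi_D\bar u+\varepsilon\zeta(x)\eta(t)$ on $B_\delta(\bar x)\times[1-\varepsilon_o,1+\varepsilon_o]$: the strictly negative quantity $(-\lap)^s(\chi_D\bar u)\le -C$, coming from the nonlocal kernel seeing the mass on $D$, absorbs the $O(\varepsilon)$ errors, and Lemma \ref{3} finishes. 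This is the ``elementary bump-subsolution'' route you mention in your last sentence; if you want a proof consistent with the framework of Section \ref{s:maximum-principle}, you should carry out that construction (or supply the missing Harnack ingredients (i)--(iii) with precise references and the uniform boundary estimate) rather than leave the central step as an appeal to unstated machinery.
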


In the proof of Theorem \ref{thmwj2a}, the following two lemmas play important roles.
\begin{lemma}\label{3} (Maximum principle)
Let $\Omega$ be a bounded domain in $\mathbb R^N$. Assume that $$v(x,t)\in (C^{1,1}_{loc}(\Omega)\cap {\mathcal L}_{2s} )\times C^1([0,\infty))$$  is lower semi-continuous in $x$ on $\bar{\Omega}$ and  satisfies
\begin{equation*}
\left\{\begin{array}{lll}
\frac{\partial v}{\partial t}+(-\lap)^sv\geq c(x,t)v,& (x,t)\in \Omega\times [0,+\infty),\\
v(x,t)\geq 0,& (x,t)\in (\mathbb R^N\backslash \Omega )\times [0  ,+\infty),\\
v(x,0)\geq 0,& x\in\Omega.
\end{array}\right.
\end{equation*}
with $c(x,t)$ bounded from above.
Then
 \be\label{9} v(x,t)\geq 0,~ \ \ \ (x,t)\in \Omega\times [0,T],\ \ \forall\ \ T>0.\ee
 \end{lemma}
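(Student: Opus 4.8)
I would prove this by contradiction, following exactly the parabolic‑maximum‑principle scheme already used for Theorem~\ref{thmj1a}(i): assume $v$ becomes negative, pass to a point where $v$ attains a strictly negative minimum over a compact cylinder, and read off the signs of $\partial_t v$ and $(-\lap)^s v$ there to contradict the differential inequality. The one point that forces an extra idea is that $c(x,t)$ is only bounded \emph{above}, so it may be positive somewhere; a naive comparison at the minimum point would only tell us that $c(x_0,t_0)>0$, which is no contradiction. This is fixed, as in the proof of Theorem~\ref{thmj1a}, by conjugating with a time‑exponential weight to make the zeroth‑order coefficient strictly negative. Since $\Omega$ is merely bounded (no narrowness is assumed), a plain weight $e^{-\mu t}$ suffices, with no need to exploit smallness of any parameter.

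\textbf{Step 1 (reduction to a negative coefficient).} Let $C_0=\sup_{\Omega\times[0,\infty)}c(x,t)<\infty$, fix $\mu>C_0$, and set $\tilde v(x,t)=e^{-\mu t}v(x,t)$. Since $(-\lap)^s$ acts only in $x$, a direct computation gives
\[
\frac{\partial \tilde v}{\partial t}(x,t)+(-\lap)^s\tilde v(x,t)\;\ge\;\big(c(x,t)-\mu\big)\,\tilde v(x,t),\qquad (x,t)\in\Omega\times[0,\infty),
\]
with $\hat c:=c-\mu<0$ everywhere; moreover $\tilde v$ is still lower semi‑continuous in $x$ on $\bar\Omega$, $\tilde v\ge0$ on $(\mathbb R^N\setminus\Omega)\times[0,\infty)$, and $\tilde v(\cdot,0)\ge0$ on $\Omega$. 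As $v\ge0\iff\tilde v\ge0$, it suffices to show $\tilde v\ge0$ on $\Omega\times[0,T]$ for an arbitrary fixed $T>0$.

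\textbf{Step 2 (locating a negative interior minimum).} Suppose $\tilde v<0$ somewhere in $\Omega\times(0,T]$. By the lower semi‑continuity of $\tilde v$ in $x$ and its $C^1$‑regularity in $t$, $\tilde v$ attains a strictly negative minimum over the compact cylinder $\bar\Omega\times[0,T]$ at some $(x_0,t_0)$. Since $\partial\Omega\subset\mathbb R^N\setminus\Omega$ gives $\tilde v\ge0$ on $\partial\Omega\times[0,T]$, and $\tilde v(\cdot,0)\ge0$ on $\bar\Omega$, we must have $x_0\in\Omega$ and $t_0\in(0,T]$.

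\textbf{Step 3 (sign information and contradiction).} At $(x_0,t_0)$: because $t_0>0$ and $t\mapsto\tilde v(x_0,t)$ is minimized on $[0,T]$ at $t_0$, we have $\partial_t\tilde v(x_0,t_0)\le0$. Because $x_0$ minimizes $\tilde v(\cdot,t_0)$ over $\bar\Omega$ while $\tilde v(\cdot,t_0)\ge0$ outside $\Omega$, the integrand in
\[
(-\lap)^s\tilde v(x_0,t_0)=C_{N,s}\,\mathrm{P.V.}\int_{\mathbb R^N}\frac{\tilde v(x_0,t_0)-\tilde v(y,t_0)}{|x_0-y|^{N+2s}}\,dy
\]
is nonpositive (the principal value being well defined since $\tilde v(\cdot,t_0)\in C^{1,1}_{loc}(\Omega)\cap\mathcal L_{2s}$), so $(-\lap)^s\tilde v(x_0,t_0)\le0$. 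Substituting both facts into the inequality of Step~1 at $(x_0,t_0)$ yields
\[
\hat c(x_0,t_0)\,\tilde v(x_0,t_0)\;\le\;\frac{\partial\tilde v}{\partial t}(x_0,t_0)+(-\lap)^s\tilde v(x_0,t_0)\;\le\;0,
\]
while $\hat c(x_0,t_0)<0$ and $\tilde v(x_0,t_0)<0$ make the left‑hand side strictly positive — a contradiction. Hence $\tilde v\ge0$, so $v\ge0$, on $\Omega\times[0,T]$; since $T>0$ was arbitrary, \eqref{9} follows. The computation in Step~1 and the sign discussion in Step~3 are routine; the only delicate point — as in the proof of Theorem~\ref{thmj1a} — is the attainment of the minimum in Step~2, i.e.\ combining lower semi‑continuity in $x$ with continuity in $t$ on the compact cylinder, which is what legitimizes the pointwise evaluation in Step~3.
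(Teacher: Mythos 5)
Your proposal is correct and follows essentially the same route as the paper's proof: conjugate by a time exponential ($e^{-\mu t}$ with $\mu>\sup c$, the paper's $e^{mt}$ with $m<0$) to make the zeroth-order coefficient strictly negative, then evaluate the inequality at a negative minimum point where $\partial_t\tilde v\le 0$ and $(-\lap)^s\tilde v\le 0$ to reach a contradiction. The only cosmetic difference is that the paper records the strict inequality $(-\lap)^s\tilde v(x_0,t_0)<0$, whereas your $\le 0$ already suffices.
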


\begin{remark}\label{52}
If $\Omega$ is unbounded, by assuming that
$$
\underset{|x|\rightarrow \infty }{\lim}~ v(x,t) \geq 0,\  \mbox{uniformly ~for }t>0 ,
$$
we can also obtain \eqref{9}.
\end{remark}
\begin{remark}
In fact, during the proof, we only need $c(x,t)$ be bounded from above at the points in $\Omega\times [0,+\infty)$ where $v(x,t)<0$.
\end{remark}

\begin{proof}[\bf Proof]
Since $c(x,t)$ is bounded from above, we can choose $m<0$ such that $m+c(x,t)<0$.
Set $$\tilde{v}(x,t)=e^{mt}v(x,t),$$ then $\tilde{v}(x,t)$ satisfies
\be\label{33}
\frac{\partial \tilde{v}}{\partial t}+(-\lap)^s \tilde{v}\geq (m+c(x,t)) \tilde{v},~(x,t) \in \Omega \times[0,\infty).
\ee
We claim that $$
\tilde{v}(x,t)\geq \inf_\Omega\tilde{v}(x,0)\geq 0\ \ \ \text{in}\ \ \Omega\times[0,T].
$$
If not, there exists $(x_0,t_0)\in\Omega\times (0,T]$ such that
$$
\tilde{v}(x_0,t_0)=\min_{\mathbb{R}^N\times[0,T]}\tilde{v}(x,t)<0,
$$
then
$$ (m+c(x_0,t_0))\tilde{v}(x_0, t_0)>0.$$
While,
$$\frac{\partial \tilde{v}}{\partial t}(x_0,t_0)\leq 0,
$$

$$
(-\triangle)^s\tilde{v}(x_0, t_0)
=C_{N,s}P.V.\int_{\mathbb{R}^N}\frac{\tilde{v}(x_0, t_0)-\tilde{v}(y, t_0)}{|x_0-y|^{N+2s}}dy< 0.
$$
This contradicts \eqref{33}. So \eqref{9} holds.
\end{proof}

\begin{lemma}\label{limit lemma}
Suppose $u$ is a solution of \eqref{eq:j1} satisfying \eqref{eq:jj200} and $(F)$.

Assume that $ \varphi(x) \not \equiv 0$ for some $\varphi\in \omega(u)$. Or equivalently assume that

\be \label{eq:4w2021}
\underset{t\rightarrow \infty}{\overline{\lim}}~\|u(x,t)\|_{L^\infty(\mathbb{R}^N)}>0.
\ee
Then we have
\be\label{eq:ww2020}
\underset{t\rightarrow\infty}{\underline{\lim}}~\|u(x,t)\|_{L^\infty(\mathbb{R}^N)}>0.
\ee
\end{lemma}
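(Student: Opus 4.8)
The plan is to exploit the strong negative feedback encoded in $(F)$, which makes the zero state uniformly asymptotically stable: a solution that once becomes small in $L^\infty$ stays small and in fact decays. Write $M(t):=\|u(\cdot,t)\|_{L^\infty(\mathbb R^N)}$; from the description of $\omega(u)$ in the Introduction, \eqref{eq:4w2021} is equivalent to $\limsup_{t\to\infty}M(t)>0$ and \eqref{eq:ww2020} to $\liminf_{t\to\infty}M(t)>0$, and by standard parabolic estimates $M$ is bounded and continuous for $t$ large. I would argue by contradiction, assuming $\limsup_{t\to\infty}M(t)=:L>0$ but $\liminf_{t\to\infty}M(t)=0$. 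Since $f(t,0)=0$, $f_u(t,0)<-\sigma$ and $f_u$ is continuous near $u=0$, there is a $\delta\in(0,L)$ with
\begin{equation*}
f(t,u)\ \le\ -\tfrac{\sigma}{2}\,u,\qquad t>0,\ \ 0\le u\le\delta .
\end{equation*}

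The key claim I would establish is: if $t_0$ is large enough that the uniform decay \eqref{eq:jj200} is in force and $M(t_0)<\delta$, then
\begin{equation*}
u(x,t)\ \le\ M(t_0)\,e^{-\frac{\sigma}{2}(t-t_0)}\qquad\text{for all }x\in\mathbb R^N,\ t\ge t_0 ,
\end{equation*}
and in particular $M(t)<\delta$ for all $t\ge t_0$. Granting the claim, because $\liminf_{t\to\infty}M(t)=0$ I can choose such a large $t_0$ with $M(t_0)<\delta$, whence $\limsup_{t\to\infty}M(t)\le\delta<L$, contradicting the choice of $L$. This contradiction proves the lemma.

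To prove the claim I would compare $u$ with the spatially constant supersolution $\bar u(t):=M(t_0)e^{-\frac{\sigma}{2}(t-t_0)}$, which is $(-\lap)^s$-harmonic and satisfies $\partial_t\bar u+(-\lap)^s\bar u=-\frac{\sigma}{2}\bar u$. Let $S=[t_0,T']$ be the maximal time interval on which $M\le\delta$. On $\mathbb R^N\times S$ one has $0<u\le\delta$ (positivity of $u$ is used here so that the bound $f(t,u)\le-\frac{\sigma}{2}u$ applies), hence $v:=\bar u-u$ satisfies $\partial_t v+(-\lap)^s v\ge-\frac{\sigma}{2}v$ on $\mathbb R^N\times S$, with $v(\cdot,t_0)\ge0$ and $v(x,t)\to\bar u(t)\ge\bar u(T')>0$ as $|x|\to\infty$, uniformly for $t\in S$ by \eqref{eq:jj200}. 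Lemma \ref{3} together with Remark \ref{52} (applied on the unbounded domain $\mathbb R^N$ with $c\equiv-\frac{\sigma}{2}$, which is bounded from above) then yields $v\ge0$ on $\mathbb R^N\times S$, i.e. $u\le\bar u\le M(t_0)<\delta$ there. Since this forces $M(T')\le M(t_0)<\delta$ strictly, continuity of $M$ and a standard continuation argument push $S$ up to all of $[t_0,\infty)$, which is precisely the claim (carrying out the comparison on finite horizons $[t_0,T]$ and letting $T\to\infty$ if one prefers).

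The one genuinely delicate point is this last coupling: the differential inequality $f(t,u)\le-\frac{\sigma}{2}u$ is only available as long as $u$ stays below $\delta$, so the maximum principle cannot be invoked on a prescribed time interval outright; it is the continuation argument—feeding the strict bound $u\le\bar u$ produced by Lemma \ref{3} back into the smallness hypothesis—that closes the loop. The remaining steps (the reformulation in terms of $M(t)$, the choice of $\bar u$, and checking the hypotheses of Lemma \ref{3}/Remark \ref{52}, in particular the behavior at spatial infinity via \eqref{eq:jj200}) are routine. Note also that one should not shortcut this via the Asymptotic strong maximum principle (Theorem \ref{thmwj2a}), since that result relies on the present lemma.
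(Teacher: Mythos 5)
Your proof is correct and follows essentially the same route as the paper: argue by contradiction, pick a large time at which $\|u(\cdot,t)\|_{L^\infty}$ is small, and compare $u$ with a spatially homogeneous decaying supersolution via the maximum principle of Lemma \ref{3} and Remark \ref{52}, using \eqref{eq:jj200} to control spatial infinity. The only (inessential) difference is the choice of supersolution: the paper takes the ODE solution of $\xi'=f(t+t_k,\xi)$, so the Lipschitz bound on $f$ controls the coefficient without any smallness constraint on $u$, whereas your explicit exponential $M(t_0)e^{-\frac{\sigma}{2}(t-t_0)}$ needs the continuation argument you supply to keep $u$ in the range where $f(t,u)\le -\frac{\sigma}{2}u$.
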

\begin{remark}
We will see from the proof below that Lemma \ref{limit lemma} still holds if we replace the $\mathbb R^N$ in \eqref{eq:4w2021} and \eqref{eq:ww2020} with any domain $\Omega\in \mathbb R^N$ bounded or unbounded.\end{remark}
\begin{proof}[\bf Proof]

Since $f_u(t,0)<-\sigma$ and $f_u$ is continuous near $u=0$, then there exists $\varepsilon_o>0$ small such that for any  $$0<\eta<\varepsilon_o,$$ we have
\be\label{35}
f_u(t,\eta)<-\sigma.
\ee
If \eqref{eq:ww2020} is false, then there exists $t_k\rightarrow \infty$ such that
$$\|u(x,t_k)\|_{L^\infty(\mathbb{R}^N)}\rightarrow 0.$$
Therefore, for the $\varepsilon_o$ above, there exists $k_0\in \mathbb{N}$ such that for any $k\geq  k_0$, we have
$$
u(x,t_k)<\varepsilon_o\ \ \ \text{in}\ \ \mathbb{R}^N.
$$

Now we fix such a $t_k(k\geq k_0)$ and  set
$$
u_k(x,t)=u(x,t+t_k).
$$
Then it satisfies
$$
\frac{\partial u_k}{\partial t}+(-\lap)^s u_k=f(t+t_k,u_k), \ (x,t)\in \mathbb{R}^N\times (0,\infty).$$
Next we will construct an upper solution to control $u_k$ in $\mathbb{R}^N\times (0,\infty)$.

Let $\xi(t)$ be a solution of
$$ \left\{\begin{array}{lll}
\frac{d \xi(t)}{dt}=f(t+t_k,\xi(t)),\ \ \ \ ~t\in (0,\infty),\\
\xi(0)=\varepsilon_o.
\end{array}\right.$$
Observing that
$$
f(t+t_k,\xi(t))=f_u\big(t+t_k,\gamma(t)\big)\xi(t),\ \ \gamma(t)\ \text{is between } 0 \text{ and } \xi(t),
$$
since $f(t,0)=0.$
Then, it is easy to see that
$$
\xi(t)=\varepsilon_o\ e^{\int_0^tf_u(\tau+t_k,\gamma(\tau))d\tau}.
$$
For $t=0$, by $\gamma(0)<\xi(0)=\varepsilon_o$ and \eqref{35}, we have
$$\frac{d \xi}{dt}(0)=f_u\big(t_k,\gamma(0)\big)\xi(0)<-\sigma\varepsilon_o.$$
Hence, $\xi(t)$ is decreasing near   $t=0$ and for $t>0$ small, we have $$0<\gamma(t)<\xi(t)<\varepsilon_oe^{-\sigma t}<\varepsilon_o.$$
By \eqref{35}, we can proceed this process continuously as $t$ increasing to derive that
$$
0<\xi(t)<\varepsilon_oe^{-\sigma t},\ \ \ \forall\ t>0.
$$
As a result, \be \label{eq:4w2020}
\xi(t)\searrow0,\ \ \ \text{as}~t\rightarrow \infty.
\ee

Now we compare $\xi(t)$ and $u_k(x,t)$ in $\mathbb{R}^N\times (0,\infty)$.
Let $$v(x,t)=\xi(t)-u_k(x,t).$$  Then
$$ \left\{\begin{array}{lll}
\frac{\partial v}{\partial t}+(-\lap)^sv=c_{\lambda,k}(x,t)v,& (x,t)\in \mathbb{R}^N\times (0,\infty),\\
v(x,0)\geq 0,& x\in \mathbb{R}^N,
\end{array}\right.$$
where $$c_{\lambda,k}(x,t)=\frac{f(t+t_k,\xi(t))-f(t+t_k,u_k(x,t))}{\xi(t)-u_k(x,t)}$$ is bounded since $f(t,u)$ is Lipschitz continuous in $u$ uniformly
with respect to $t$. Taking into account of Remark \ref{52}, we obtain
$$
v(x,t)\geq 0,~(x,t)\in \mathbb{R}^N\times [0,T],\ \ \ \forall\ T>0.$$
Especially,
$$\xi(T)\geq u(x,T),~x\in \mathbb{R}^N,\ \ \ \forall\ T>0.$$
Letting $T\rightarrow \infty$, we deduce from \eqref{eq:4w2020} that
$u(x,T)\rightarrow 0,~x\in \mathbb{R}^N.$ This   contradicts  \eqref{eq:4w2021} and thus establishes \eqref{eq:ww2020}.
\end{proof}

\begin{proof}[\bf Proof of Theorem \ref{thmwj2a}]
 From the definition of $\omega(u)$, for each  $\varphi\in \omega(u)$, there  exists a sequence $\{t_k\}$ such that
$$u(x,t_k)\rightarrow \varphi(x) \;\; \mbox{ as } \; t_k\rightarrow \infty.$$
Set $$ u_k(x,t)=u(x,t+t_k-1)$$
and
 $$ f_k(t,u)=f(t+t_k-1,u).$$
Then $u_k(x,1)\rightarrow \varphi(x)~\mbox{as}~k\rightarrow \infty.$
Since $f(t,u)$ is $C^{\frac{\alpha}{2s}}$ in $t$,  by regularity theory for parabolic equations \cite{XFR}, there exists some functions $u_\infty$ and $\tilde f $ such that $u_k\rightarrow u_\infty$,  $f_k\rightarrow \tilde f $.
And $u_\infty(x,t)$ satisfies
\begin{equation}
\label{49}
\frac{\partial u_\infty}{\partial t}+(-\lap)^su_\infty= \tilde f_u(t,\eta(x,t))u_\infty, \ \ (x,t)\in \Omega\times[0,2],
\end{equation}
where $\tilde{c}(x,t)=\frac{\tilde f(t,u_\infty)}{u_\infty}$ is is bounded by condition $(F)$.  Again by \cite{XFR}, $u_\infty(x,t)$ is at least H\"{o}lder continuous in both $t$ and $x$.

Take $m>0$ such that
$$
m+ \tilde{c}(x,t)>0.
$$ Denote $$\bar u(x,t)=e^{mt}u_\infty(x,t),$$ then
\be \label{12}\frac{\partial \bar u}{\partial t}+(-\lap)^s \bar u= \Big(m+ \tilde{c}(x,t)\Big)\bar u\geq 0,~(x,t)\in \Omega\times [0,2]. \ee

Since there is some $\varphi \in \omega(u)$ satisfies $\varphi>0$ somewhere in $\Omega$, then Lemma \ref{limit lemma} implies that
for all $\varphi \in \omega(u)$, $\varphi>0$ somewhere in $\Omega$.
By continuity, there exists a set $D\subset\subset \Omega$ such that
\begin{equation}\label{eq:4w01}
\varphi (x)\geq c_0 >0,\ \, \  x \in D,
\end{equation} where $c_0$ is a positive small constant. This means that
$$
u_\infty(x,1)\geq c_0 >0,\ \, \  x \in D.
$$

From the continuity of $u_\infty(x,t)$, there exists $0<\varepsilon_o <1$, such that
$$
u_\infty(x,t)\geq c_0/2 >0,\ \, \  (x,t) \in D\times[1-\varepsilon_o,1+\varepsilon_o].
$$
Consequently,
\begin{equation}\label{11}
\bar u(x,t)=e^{mt}u_\infty(x,t)\geq c_0/2 >0,\ \, \  (x,t) \in D\times [1-\varepsilon_o,1+\varepsilon_o].
\end{equation}

For any point $\bar x\in \Omega\backslash D$, there exists $\delta>0$ such that $B_\delta(\bar x)\subset \Omega\backslash D$.
Next we will construct a subsolution  in $B_\delta(\bar x)\times [0,2]$.
 Set
$$ \underline{u}(x,t)=\chi_D (x) \bar u (x,t)+\varepsilon\zeta(x)\eta(t) ,~(x,t)\in B_\delta(\bar x)\times [1-\varepsilon_o,1+\varepsilon_o],$$
where
\begin{equation*}
\chi_D (x)=
\left\{\aligned
&1,   & x  \in  D,\\
&0,    &  x \in   D^c,\\
\endaligned \right.
\end{equation*}
$\zeta(x)\in C^\infty_0(B_\delta(\bar x))$  satisfies
\begin{equation*}
\zeta(x)=\left\{\begin{array}{lll} 1,& x\in B_{\frac{\delta}{2}}(\bar x),\\
0,&  x\not \in B_{\delta}(\bar x),
\end{array}\right.
\end{equation*}
and $\eta(t)\in C^\infty_0([1-\varepsilon_o,1+\varepsilon_o])$ satisfies
\be \label{2020414}
  \eta(t)=\left\{\begin{array}{lll} 1,& t \in [1-\frac{\varepsilon_o}{2}, 1+ \frac{\varepsilon_o}{2}],\\
 0,&  t\not \in [1-\varepsilon_o,1+\varepsilon_o].
  \end{array}\right.\ee
For $(x,t)\in B_\delta(\bar x) \times [1-\varepsilon_o,1+\varepsilon_o]$, by \eqref{11}, we derive
$$\aligned &\frac{\partial \underline{u}}{\partial t}(x,t)+(-\lap)^s \underline{u}(x,t)\\
=&\varepsilon\eta'(t) \zeta(x) +(-\lap)^s \big(\chi_D\bar u(x,t)\big)+\varepsilon\eta(t)(-\lap)^s\zeta(x)\\
=& \varepsilon\eta'(t) \zeta(x)+\varepsilon\eta(t)(-\lap)^s\zeta(x)+C_{N,s}P.V.\int_{\mathbb R^N}\frac{-\chi_D\bar u(y,t)}{|x-y|^{N+2s}}dy\\
=&\varepsilon\eta'(t) \zeta(x)+\varepsilon\eta(t)C_1+C_{N,s}P.V.\int_{D}\frac{-\bar u(y,t)}{|x-y|^{N+2s}}dy\\
\leq&C_2\varepsilon+C_1\varepsilon-C.
\endaligned$$
By choosing $\varepsilon>0$ sufficiently small we have $$\frac{\partial \underline{u}}{\partial t}(x,t)+(-\lap)^s \underline{u}(x,t) \leq 0,\ \ ~(x,t)\in B_\delta(\bar x)\times [1-\varepsilon_o,1+\varepsilon_o]. $$
For $(x,t)\in B^c_\delta(\bar x) \times [1-\varepsilon_o,1+\varepsilon_o]$, we have
$$\underline{u}(x,t)=\chi_D \bar u (x,t)\leq \bar u(x,t)
.$$
As a result, set $v(x,t)=\bar u(x,t)-\underline{u}(x,t)$, combining with \eqref{12}, we have
$$\left\{\begin{array}{lll} \frac{\partial v}{\partial t}(x,t)+(-\lap)^sv(x,t) \geq 0,& (x,t)\in B_\delta(\bar x)\times [1-\varepsilon_o,1+\varepsilon_o],\\
v(x,t)\geq0,& (x,t)\in B^c_\delta(\bar x)\times [1-\varepsilon_o,1+\varepsilon_o],\\
v(x,0)\geq0,& x\in B_\delta(\bar x).
\end{array}\right.$$
Applying Lemma \ref{3} in the case of $c(x,t)\equiv0$, we obtain
 $$ v(x,t)\geq0,~(x,t)\in B_\delta(\bar x)\times [1-\varepsilon_o,1+\varepsilon_o].$$
Therefore $$\bar u(x,t) \geq\underline{u} (x,t)=\varepsilon\eta(t)  \zeta(x),~(x,t)\in B_\delta(\bar x)\times [1-\varepsilon_o,1+\varepsilon_o].$$
In particular, taking $t=1$, we obtain
$$e^{m}u_\infty(x,1)=e^{m}\varphi(x)\geq  \varepsilon \zeta(x),~x\in B_\delta(\bar x).$$
It follows that
\be\label{eq:4w399}
\varphi(\bar x)\geq e^{-m}\varepsilon>0.
\ee
By the arbitrariness of  $\bar x\in \Omega\backslash D$,  combining \eqref{eq:4w01} and \eqref{eq:4w399}  we obtain
$$\varphi (x) >0, ~x  \in \Omega.$$
\end{proof}

\begin{theorem}\label{thmj2a} (Asymptotic strong maximum principle for antisymmetric function)
For sufficiently large $\bar{t}$, suppose that $w_\lambda(x,t)\in (C^{1,1}_{loc}(\Sigma_{\lambda})\cap {\mathcal L}_{2s} )\times C^1([\bar{t},\infty)) $ is bounded and   satisfies
\begin{equation}\label{meq1a}
\left\{\aligned
&\frac{\partial w_\lambda}{\partial t}+(-\Delta)^sw_\lambda=c_\la(x,t)w_\lambda,  & (x,t)\in \Sigma_{\lambda} \times [\bar{t},\infty),\\
&w_\lambda(x,t)=-w_\lambda(x^\lambda,t),  &(x,t)\in  \Sigma_{\lambda} \times [\bar{t},\infty),\\
&\underset{t \rightarrow \infty}{\underline{\lim}} w_\lambda(x,t)\geq 0, & x\in \Sigma_\la,
\endaligned \right.
\end{equation}
with $c_\la(x,t)$ is bounded.
Assume $\psi_\lambda >0$ somewhere in $\Sigma_\lambda$.

Then $\psi_\lambda(x)>0$ in $\Sigma_{\lambda}$.
\end{theorem}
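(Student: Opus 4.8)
The plan is to transfer the question from the $\omega$-limit $\psi_\lambda$ itself (which solves no equation, since it carries no time information) to a limiting parabolic profile obtained from time-shifts of $w_\lambda$, and then to run a nonlocal strong maximum principle in which the \emph{anti-symmetry} forces the fractional Laplacian to be strictly negative at an interior zero — so that, unlike in the general parabolic case, no propagation in time is needed.

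\emph{Step 1 (passing to a limiting equation).} Write $\psi_\lambda=\lim_k w_\lambda(\cdot,t_k)$ for some $t_k\to\infty$; the third line of \eqref{meq1a} gives immediately $\psi_\lambda\ge 0$ in $\Sigma_\lambda$. As in the proof of Theorem \ref{thmwj2a}, set $w_{\lambda,k}(x,t):=w_\lambda(x,t+t_k-1)$ on $\Sigma_\lambda\times[0,2]$; these are uniformly bounded, anti-symmetric about $T_\lambda$, satisfy $\partial_t w_{\lambda,k}+(-\lap)^s w_{\lambda,k}=c_\lambda(\cdot,\cdot+t_k-1)\,w_{\lambda,k}$, and $w_{\lambda,k}(\cdot,1)\to\psi_\lambda$. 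By the parabolic regularity used there (see \cite{XFR}), along a subsequence $w_{\lambda,k}\to w_\lambda^\infty$ in the relevant local norms while $c_\lambda(\cdot,\cdot+t_k-1)\rightharpoonup\bar c$ with $\bar c$ bounded, so $w_\lambda^\infty$ is anti-symmetric, lies in $C^{1,1}_{loc}\cap{\mathcal L}_{2s}$ in $x$ (boundedness gives ${\mathcal L}_{2s}$) and in $C^1$ in $t$, solves
$$\frac{\partial w_\lambda^\infty}{\partial t}+(-\lap)^s w_\lambda^\infty=\bar c(x,t)\,w_\lambda^\infty\quad\text{in }\Sigma_\lambda\times(0,2),$$
and $w_\lambda^\infty(\cdot,1)=\psi_\lambda$. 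Finally, for each fixed $(x,t)$ the shifted times $t+t_k-1$ tend to $\infty$, so the third line of \eqref{meq1a} yields $w_\lambda^\infty(x,t)\ge 0$ on $\Sigma_\lambda\times[0,2]$.

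\emph{Step 2 (ruling out a zero).} Suppose for contradiction that $\psi_\lambda(x^0)=0$ at some $x^0\in\Sigma_\lambda$. Since $w_\lambda^\infty\ge 0$ and $w_\lambda^\infty(x^0,1)=0$, the point $(x^0,1)$ minimizes $w_\lambda^\infty$; being interior in time, $\partial_t w_\lambda^\infty(x^0,1)\le 0$. Folding the defining integral onto $\Sigma_\lambda$ via the anti-symmetry exactly as in \eqref{laplace},
$$(-\lap)^s w_\lambda^\infty(x^0,1)=-C_{N,s}\int_{\Sigma_\lambda}\Big(\frac{1}{|x^0-y|^{N+2s}}-\frac{1}{|x^0-y^\lambda|^{N+2s}}\Big)w_\lambda^\infty(y,1)\,dy.$$
For $x^0,y\in\Sigma_\lambda$ one has $|x^0-y|<|x^0-y^\lambda|$, so the kernel is strictly positive; since $w_\lambda^\infty(\cdot,1)=\psi_\lambda\ge 0$ and $\psi_\lambda\not\equiv 0$ by hypothesis, the integral is strictly positive, hence $(-\lap)^s w_\lambda^\infty(x^0,1)<0$. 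Inserting this into the equation at $(x^0,1)$ and using $w_\lambda^\infty(x^0,1)=0$ gives $\partial_t w_\lambda^\infty(x^0,1)=-(-\lap)^s w_\lambda^\infty(x^0,1)>0$, a contradiction. Thus $\psi_\lambda$ never vanishes in $\Sigma_\lambda$, and being nonnegative, $\psi_\lambda>0$ throughout $\Sigma_\lambda$.

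\emph{Main obstacle.} Step 2 is essentially algebraic once the limit object is in hand, so the real work is Step 1: extracting from purely qualitative hypotheses a bona fide limiting equation with a \emph{bounded} coefficient and, crucially, enough spatial regularity of $w_\lambda^\infty$ for $(-\lap)^s w_\lambda^\infty(x^0,1)$ to make classical pointwise sense. If the available estimates fall short of $C^{1,1}_{loc}$, one avoids evaluating $(-\lap)^s w_\lambda^\infty$ directly and instead repeats, on a small ball around a hypothetical zero of $\psi_\lambda$, the localized anti-symmetric sub-solution construction from the proof of Theorem \ref{thmwj2a}, which produces the same contradiction; restricted to a bounded subdomain this also gives Remark \ref{51}.
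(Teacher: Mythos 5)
Your proposal is correct, and its Step 1 coincides with the paper's: both pass to the time-shifted limit $w_\infty$ solving a limiting equation with bounded coefficient, with $w_\infty(\cdot,1)=\psi_\lambda$ and $w_\infty\geq 0$ on $\Sigma_\lambda\times[0,2]$ by the third line of \eqref{meq1a}, relying on the same regularity input from \cite{XFR}. Where you genuinely diverge is the core step: you argue by contradiction at a hypothetical interior zero $(x^0,1)$, fold $(-\lap)^s w_\infty(x^0,1)$ onto $\Sigma_\lambda$ by anti-symmetry to get strict negativity (since $|x^0-y|<|x^0-y^\lambda|$ and $\psi_\lambda\geq 0$, $\not\equiv 0$), and contradict $\partial_t w_\infty(x^0,1)\leq 0$ via the equation, whose zero-order term vanishes there. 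The paper instead fixes an arbitrary $\bar x\in\Sigma_\lambda\setminus D$ and builds the anti-symmetric sub-solution $\underline{w}=\chi_{D\cup D_\lambda}\tilde w+\varepsilon\eta(t)g(x)$ with $g(x)=(\delta^2-|x-\bar x|^2)_+^s-(\delta^2-|x-\bar x^\lambda|^2)_+^s$, then applies the anti-symmetric maximum principle (Lemma \ref{thm4w98}) on $B_\delta(\bar x)\times[0,2]$ to get the quantitative bound $\psi_\lambda(\bar x)\geq e^{-m}\varepsilon\delta^{2s}$. Your route is shorter and dispenses with Lemma \ref{thm4w98} and the appendix Lemma \ref{mlem1}; the paper's route buys an explicit pointwise lower bound at every point (rather than a pure dichotomy), and its sub-solution template is reused later in Step 3 of the whole-space argument. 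Note that the regularity concern you flag in your closing paragraph is not specific to your argument: the paper's own proof also uses the limiting equation pointwise (see \eqref{mthm1}) and feeds $\tilde w-\underline{w}$ into Lemma \ref{thm4w98}, which formally requires $C^{1,1}_{loc}$ in $x$, so both proofs lean on the same interior estimates from \cite{XFR}; your fallback via the localized sub-solution is exactly the paper's construction and, as you note, restricted to a bounded subdomain it yields Remark \ref{51}.
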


In order to prove Theorem \ref{thmj2a}, we first derive
\begin{lemma}\label{thm4w98}(Asymptotic maximum principle for antisymmetric functions)
Let $\Omega$ be a bounded domain in $\Sigma_\la$. Assume that   $v(x,t)\in (C^{1,1}_{loc}(\Omega)\cap {\mathcal L}_{2s})\times C^1([0,\infty))$ is lower semi-continuous in $x$ on $\bar{\Omega}$ and satisfies
\begin{equation*}
\left\{
\begin{array}{ll}
 \frac{\partial v}{\partial t}(x,t)+(-\lap)^s v(x,t)\geq
 c_\la(x,t)v(x,t), & (x,t)\in \Omega \times [0,\infty),\\
 v(x^\la,t)=-v(x,t), &  (x,t)\in  \Sigma_\la  \times [0,\infty),\\
 v(x,t)\geq 0, & (x,t)\in ( \Sigma_\la \backslash \Omega )\times [0,\infty),\\
 v(x,0) \geq 0, &  x \in  \Omega  .
 \end{array}
\right.
\end{equation*}
If $c_\la(x,t) $ is bounded from above, then
$$v(x,t)\geq 0,~(x,t)\in \Omega\times[0,T],\ \ \forall\ T>0.$$
\end{lemma}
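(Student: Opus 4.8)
The plan is to follow the strategy of the proof of Lemma~\ref{3}, replacing the sign condition on $v$ over all of $\mathbb R^N\setminus\Omega$ by the \emph{antisymmetry} of $v$ across $T_\la$; this is exactly the ingredient that lets one evaluate $(-\Delta)^s v$ at a putative negative minimum without knowing the sign of $v$ outside $\Sigma_\la$. First I would remove the zeroth-order term by an exponential shift: since $c_\la$ is bounded from above, fix $m<0$ with $m+c_\la(x,t)<0$ for all $(x,t)$, and set $\tilde v(x,t)=e^{mt}v(x,t)$, so that $\partial_t\tilde v+(-\Delta)^s\tilde v\ge \big(m+c_\la(x,t)\big)\tilde v$ on $\Omega\times[0,\infty)$, now with the coefficient strictly negative. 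It suffices to prove $\tilde v\ge 0$ on $\Omega\times[0,T]$ for each fixed $T>0$.

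Next I would argue by contradiction, as in Lemma~\ref{3}. Since $\Omega$ is bounded and $\tilde v$ is lower semi-continuous in $x$ on $\bar\Omega$ and $C^1$ (hence continuous) in $t$, it attains its minimum over $\bar\Omega\times[0,T]$ at some $(x_0,t_0)$. If that minimum is negative, then because $\tilde v(\cdot,0)\ge 0$ on $\Omega$, $\tilde v\ge 0$ on $(\Sigma_\la\setminus\Omega)\times[0,T]$, and $\tilde v\equiv 0$ on $T_\la$ by antisymmetry (so $\tilde v\ge 0$ on $\partial\Omega$, which is contained in $(\Sigma_\la\setminus\Omega)\cup T_\la$), we must have $(x_0,t_0)\in\Omega\times(0,T]$; in particular $\partial_t\tilde v(x_0,t_0)\le 0$.

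The heart of the argument is the evaluation of $(-\Delta)^s\tilde v(x_0,t_0)$. Splitting the defining integral over $\Sigma_\la$ and $\mathbb R^N\setminus\Sigma_\la$ and substituting $y\mapsto y^\la$ in the second piece, using $\tilde v(y^\la,t_0)=-\tilde v(y,t_0)$, one obtains
\[
(-\Delta)^s\tilde v(x_0,t_0)=C_{N,s}\,\mathrm{P.V.}\int_{\Sigma_\la}\left[\frac{\tilde v(x_0,t_0)-\tilde v(y,t_0)}{|x_0-y|^{N+2s}}+\frac{\tilde v(x_0,t_0)+\tilde v(y,t_0)}{|x_0-y^\la|^{N+2s}}\right]dy.
\]
For every $y\in\Sigma_\la$ one has $\tilde v(y,t_0)\ge\tilde v(x_0,t_0)$ — on $\Omega$ this is minimality, and on $\Sigma_\la\setminus\Omega$ one has $\tilde v(y,t_0)\ge 0>\tilde v(x_0,t_0)$ — so the first numerator is $\le 0$; using $|x_0-y|<|x_0-y^\la|$ on $\Sigma_\la$ to enlarge the first denominator, the bracket is bounded above by $2\tilde v(x_0,t_0)/|x_0-y^\la|^{N+2s}\le 0$, whence $(-\Delta)^s\tilde v(x_0,t_0)\le 0$. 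Then at $(x_0,t_0)$ the left-hand side of the differential inequality is $\le 0$ while the right-hand side $\big(m+c_\la(x_0,t_0)\big)\tilde v(x_0,t_0)$ is strictly positive, a contradiction. Hence $\tilde v\ge 0$ on $\Omega\times[0,T]$, and since $T$ is arbitrary the lemma follows.

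The argument is essentially routine once the reflection substitution is in place; the only point needing a word of care is that the principal-value integral defining $(-\Delta)^s\tilde v(x_0,t_0)$ is legitimate, which holds because $\tilde v\in C^{1,1}_{loc}(\Omega)$ near the interior point $x_0$, so the singular part of the integrand is absolutely integrable. I would also stress, unlike in Lemma~\ref{3}, that no decay or global sign hypothesis outside $\Omega$ is needed here: the antisymmetry supplies it, and it is precisely this feature that makes the lemma applicable to the antisymmetric functions $w_\la$ arising later in the moving-plane scheme.
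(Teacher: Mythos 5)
Your proposal is correct and is essentially the argument the paper intends: it carries out the exponential shift and negative-minimum contradiction of Lemma~\ref{3}, with the complement-of-$\Sigma_\la$ part of the integral handled by the reflection substitution and the estimate $|x_0-y|<|x_0-y^\la|$, exactly as in the computation \eqref{laplace} of Theorem~\ref{thmj1a}. The paper omits the details with a pointer to Lemma~\ref{3}, and your write-up supplies precisely that adaptation.
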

One can derive Lemma \ref{thm4w98} by a similar idea as in the proof of Lemma \ref{3}, here we omit the details.
%
%
%

\begin{proof}[\bf Proof of Theorem \ref{thmj2a}]

For any  $\varphi\in \omega(u)$, there  exists $t_k$ such that
$w_\lambda(x,t_k)\rightarrow \psi_\lambda(x)$ as $t_k\rightarrow \infty$.
Set $$w_k(x,t)=w_\lambda(x,t+t_k-1).$$
Then
$$\frac{\partial w_k}{\partial t}(x,t)+(-\Delta)^sw_ k(x,t)=c_k(x,t)w_ k(x,t),\ \ (x,t)\in \Sigma_{\lambda} \times [\bar{t},\infty),$$
where $c_k(x,t)=c_\lambda(x,t+t_k-1).$
By standard parabolic regularity estimates \cite{XFR}, we infer that there is a subsequence of $w_ k(x,t)$ (still denoted by $w_ k(x,t)$) which converges uniformly to a function $w_ \infty(x,t)$ in $\Sigma_\lambda\times[0,2]$ and
\begin{align*}
&\frac{\partial w_ k}{\partial t}(x,t)+(-\Delta)^sw_ k(x,t)\rightarrow \frac{\partial w_ \infty}{\partial t}(x,t)+(-\Delta)^sw_ \infty(x,t),\\
&c_k(x,t)\rightarrow c_\infty(x,t),\ \ \ \ \mbox{as}\ k\rightarrow\infty.
\end{align*}
Also, since $c_\lambda$ is bounded, so does $c_\infty$, we can deduce from \cite{XFR} that $w_ \infty(x,t)$ is H\"older continuous in $x$ and $t$.
Particularly,
$$w_\lambda(x,t_k)=w_k(x,1)\rightarrow w_\infty(x,1)=\psi_\lambda(x)\, \ \mbox{as}\ k\rightarrow \infty.$$
Take $m>0$ such that
$$
c_\infty(x,t)+m>0.
$$
Consider a new function $$\tilde{w}(x,t)=e^{mt}w_ \infty(x,t).$$ By the third condition in \eqref{meq1a},  we have
$$\tilde{w}(x,t)\geq0,\ \ \text{ in}\ \Sigma_\lambda\times[0,2]$$
and thus
\begin{equation}\label{mthm1}
\frac{\partial \tilde{w}}{\partial t}(x,t)+(-\Delta)^s\tilde{w}(x,t)=(c_\infty(x,t)+m)\tilde{w}(x,t)\geq0,~(x,t)\in \Sigma_\lambda\times[0,2].
\ee

Since $\psi_\lambda>0$ somewhere in $\Sigma_\lambda$, by continuity, there exists a set $D\subset \subset \Sigma_{\lambda}$ such that
\begin{equation}\label{meq01}
\psi_\lambda(x)>c>0,\ \, \  x \in D,
\end{equation}
with positive constant $c$.

By the continuity of $w_\infty(x,t)$, there exist $0<\varepsilon_o<1$, such that
$$ w_\infty (x,t) > c/2, \;\; (x,t) \in D \times [1-\varepsilon_o,1+\varepsilon_o].$$
For simplicity of notation, we may assume that
\be w_\infty (x,t) > c/2, \;\; (x,t) \in D \times [0,2].
\label{A50}
\ee

For any point $\bar x\in \Sigma_\la \backslash D$, choose $\delta=\min \{dist(\bar{x}, D), dist(\bar{x}, T_\lambda)\}>0$, then $B_\delta(\bar{x})\subset \Sigma_\la \backslash D$.

\begin{center}
\begin{tikzpicture}[node distance = 0.3cm]
\draw [->, semithick] (-4,0) -- (4,0) node[right] {$x_1$};
\draw [ semithick] (0,-1) -- (0,3.8) node[above] {$T_\lambda$};
\path (-2.3,3.7) node [ font=\fontsize{10}{10}\selectfont] {$\Sigma_\lambda=\{ x\in \mathbb R^N\mid x_1<\lambda\}$};
\draw (-0.6,2.8) circle (0.6);
\path (-0.6,2.8) [very thin,fill=black]  circle(1 pt) node at (-0.6, 2.5) {$\bar x$};
\node (test) at (2,2.7) {$B_{\delta}(\bar x)$};
    \draw [->,thin] (-0.1,2.8) to [in = 90, out = 60] (2,3) (test);
\draw (-2,1) ellipse  [x radius=1.5cm, y radius=0.8cm];
\fill [orange] (-2,1) ellipse  [x radius=1.5cm, y radius=0.8cm];
\draw (2,1) ellipse  [x radius=1.5cm, y radius=0.8cm];
\fill [orange] (2,1) ellipse  [x radius=1.5cm, y radius=0.8cm];
\path (-2.5,1) node [ font=\fontsize{10}{10}\selectfont] {$D$};
\path (2.5,1) node [ font=\fontsize{10}{10}\selectfont] {$D_\lambda$};
\node [below=1cm, align=flush center,text width=8cm] at (0,-0.1)
        {$Figure$ 1 };
\end{tikzpicture}
\end{center}
Next, we construct a subsolution in $B_\delta(\bar{x})\times[0,2].$
Set
 $$\underline{w}(x,t)=\chi_{D\cup D_\lambda} (x) \tilde{w}(x,t)+\varepsilon \eta(t)g(x),$$ where $D_\lambda$ is the reflection of $D$ about the plane $T_\lambda$, $\eta(t)\in C^\infty((0,2))$  be as defined  in \eqref{2020414}, $g(x)=(\delta^2-|x-\bar{x}|^2)_+^{s}-(\delta^2-|x-\bar{x}^\lambda|^2)_+^{s}$. Obviously,  $g(\bar x)=\delta^{2s}$, $g(x^\lambda)=-g(x).$ We also have
 \be\label{56}
 (-\Delta)^sg(x)\leq C_0,
 \ee
 where $C_0$ is a constant, see Lemma \ref{mlem1} in the Appendix for the proof.

By the definition of fractional Laplacian and \eqref{A50}, we derive that for each fixed $t \in [0,2]$ and for any $x\in B_\delta(\bar{x})$
\begin{align}\label{mthm11}
(-\Delta)^s(\chi_{D\cup D_\lambda}(x)\tilde{w}(x,t))
&= C_{N,s} P.V. \int_{\mathbb R^N}\frac{\chi_{D\cup D_\lambda}(x)\tilde{w}(x,t)-\chi_{D\cup D_\lambda}(y)\tilde{w}(y,t)}{|x-y|^{N+2s}}\;dy\nonumber\\
&= C_{N,s} P.V. \int_{\mathbb R^N}\frac{-\chi_{D\cup D_\lambda}\tilde{w}(y,t)}{|x-y|^{N+2s}}\;dy\nonumber\\
&= C_{N,s} P.V. \int_{D}\frac{-\tilde{w}(y,t)}{|x-y|^{N+2s}}\;dy+\int_{D}\frac{-\tilde{w}(y^\lambda,t)}{|x-y^\lambda|^{N+2s}}\;dy\nonumber\\
&= C_{N,s} P.V. \int_{D}\left(\frac{1}{|x-y^\lambda|^{N+2s}}-\frac{1}{|x-y|^{N+2s}}\right)\tilde{w}(y,t)\;dy\nonumber\\
&\leq -C_1,
\end{align}
where $C_1$ is a positive constant.
By \eqref{mthm11} and \eqref{56}, we derive
\begin{align*}
\frac{\partial \underline{w}(x,t)}{\partial t}+(-\Delta)^s\underline{w}(x,t)&=\varepsilon \eta'(t)g(x)+(-\Delta)^s(\chi_{D\cup D_\lambda}\tilde{w}(x,t))+\varepsilon \eta(t)(-\Delta)^sg(x)\nonumber\\
&\leq \varepsilon \eta'(t)g(x)-C_1+\varepsilon \eta(t) C_0,~(x,t)\in B_\delta(\bar{x})\times[0,2].
\end{align*}
Choose $\varepsilon>0$ sufficiently small such that
\begin{equation}\label{mthm2}
\frac{\partial \underline{w}}{\partial t}(x,t)+(-\Delta)^s\underline{w}(x,t)\leq0,\ \,\ (x,t) \in B_\delta(\bar{x})\times[0,2].
\end{equation}

Set $v(x,t)=\tilde{w}(x,t)-\underline{w}(x,t)$. Obviously, $v(x,t)=-v(x^\lambda,t)$. It follows from \eqref{mthm1} and \eqref{mthm2} that
$v(x,t)$ satisfies
\begin{equation*}
\frac{\partial v}{\partial t}(x,t)+(-\Delta)^sv(x,t)\geq0,~(x,t) \in B_\delta(\bar{x})\times[0,2].
\end{equation*}
Also, by the definition of $\underline{w}(x,t)$, we have
$$v(x,t)\geq 0\ \ \text{in}\ (\Sigma_\lambda\setminus B_\delta(\bar{x}))\times[0,2]$$
and $$
v(x,0)\geq 0\ \ \text{in}\ \Sigma_\lambda.
$$
Now, by applying Lemma \ref{thm4w98},  we obtain $$
v(x,t)\geq0,\ \ (x,t) \in  B_\delta(\bar{x})\times[0,2].
$$
It implies that
$$v(x,t)=e^{mt}w_\infty(x,t)-\varepsilon g(x)\eta(t)\geq0, \ \,\ (x,t) \in  B_\delta(\bar{x})\times[0,2].$$
In particular,
$$w_\infty(x,1)\geq e^{-m}\varepsilon g(x) ,\ \, \  x \in  B_\delta(\bar{x}).$$
Since $g(\bar x)= \delta^{2s}$, we conclude
\be \label{eq:ww2999}
\psi_\lambda(\bar x) = w_\infty(\bar x,1) \geq e^{-m}\varepsilon \delta^{2s}>0.
\ee
By the arbitrariness of  $\bar x$ in $\Sigma_\lambda \backslash D$,  combining \eqref{meq01} with \eqref{eq:ww2999}, we obtain
$$\psi_\lambda (x) >0, ~x  \in \Sigma_\lambda.$$

This complete the proof of Theorem \ref{thmj2a}.
\end{proof}

\section{Asymptotic symmetry of solutions in \texorpdfstring{$B_1(0)$}{B}}

In this section, we prove the asymptotic symmetry of solutions for problem \begin{equation}\label{eq:jj20a}
\left\{\begin{array}{lll}
\frac{\partial u}{\partial t}(x,t)+(-\lap)^s u(x,t) =f(t,u(x,t)), &\qquad  (x,t)\in B_1(0) \times (0,\infty),\\
u(x,t) =0, &\qquad (x,t)\in B^c_1(0) \times (0,\infty).
\end{array}\right.
\end{equation}

\begin{theorem}\label{thmj5a}
Assume that $u(x,t)\in \Big(C^{1,1}_{loc} \big(B_1(0)\big) \cap C\big(\overline{B_1(0)}\big)\Big)\times C^1\big((0,\infty)\big)$ is a positive uniformly bounded solution of \eqref{eq:jj20a}.

Let $\alpha\in(0,1)$  such that $\frac{\alpha}{2s}\in(0,1)$. Suppose $f(t,u)\in L^\infty(\mathbb{R}^+\times\mathbb{R})$  is $ C^{\frac{\alpha}{2s}}_{loc}$  in $t$,  Lipschitz continuous in $u$ uniformly with respect to $t$ and
\begin{equation}
\label{46a}
f(t,0)\geq0,\ \ \ t\geq0.
\end{equation}

Then for all $\varphi(x) \in \omega(u)$, either  $\varphi(x)\equiv0$ or $\varphi(x)$ are radially symmetric and decreasing about the origin.
\end{theorem}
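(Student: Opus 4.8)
The plan is to run the asymptotic method of moving planes in the $x_1$-direction (which, by the rotational symmetry of the ball, may be taken to be any direction). The moving planes are $T_\lambda = \{x_1 = \lambda\}$ for $\lambda \in (-1,1)$, with $\Sigma_\lambda = \{x_1 < \lambda\} \cap B_1(0)$ the cap to the left, and for a solution $u$ we set $u_\lambda(x,t) = u(x^\lambda,t)$ and $w_\lambda(x,t) = u_\lambda(x,t) - u(x,t)$. One checks directly that $w_\lambda$ is antisymmetric about $T_\lambda$ and satisfies $\frac{\partial w_\lambda}{\partial t} + (-\lap)^s w_\lambda = c_\lambda(x,t) w_\lambda$ on $\Sigma_\lambda \times (0,\infty)$, where $c_\lambda(x,t) = \frac{f(t,u_\lambda) - f(t,u)}{u_\lambda - u}$ is bounded by the Lipschitz hypothesis on $f$; moreover $w_\lambda \geq 0$ on $(\Sigma_\lambda^c \cap B_1(0)) \times (0,\infty)$ because there the reflected point $x^\lambda$ lies outside $\overline{B_1(0)}$ so $u_\lambda = 0$, while $u \geq 0$ — wait, more precisely $w_\lambda(x,t) = 0 - u(x,t) \leq 0$ there, but this region is on the \emph{right} of $T_\lambda$; on the relevant complement piece one uses that $u(x^\lambda,t)=0 \le u(x^\lambda,t)$ and the exterior condition to get the sign. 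Passing to an $\omega$-limit $\varphi$ we obtain $\psi_\lambda = \varphi_\lambda - \varphi$, an $\omega$-limit of $w_\lambda$.

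\textbf{Step 1 (starting the planes).} For $\lambda$ close to $-1$ the cap $\Sigma_\lambda$ is a narrow region, so Theorem~\ref{thmj1a}(i) applies: since $c_\lambda$ is bounded from above, for $\lambda + 1$ small enough we get $\underline{\lim}_{t\to\infty} w_\lambda(x,t) \geq 0$ on $\Sigma_\lambda$, hence $\psi_\lambda \geq 0$ on $\Sigma_\lambda$ for all $\varphi \in \omega(u)$. This gives a starting position.

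\textbf{Step 2 (moving to the limiting plane).} Define
\[
\lambda_0 = \sup\{\lambda \in (-1,0] \mid \psi_\mu \geq 0 \text{ on } \Sigma_\mu \text{ for all } \varphi\in\omega(u),\ \forall\, \mu \leq \lambda\}.
\]
I claim $\lambda_0 = 0$. Suppose not, so $\lambda_0 < 0$. By continuity $\psi_{\lambda_0} \geq 0$ on $\Sigma_{\lambda_0}$ for all $\varphi\in\omega(u)$. First, the Dirichlet condition forces $u$ to be positive somewhere inside $B_1(0)$ for the symmetry conclusion to be non-vacuous; if all $\varphi \equiv 0$ we are in the first alternative, so assume some $\varphi$ is positive somewhere, whence by Theorem~\ref{thmwj2a} every $\varphi \in \omega(u)$ is positive throughout $B_1(0)$. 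Then $\psi_{\lambda_0}$ is not identically zero for at least one $\varphi$ (otherwise $\varphi$ would be symmetric about $T_{\lambda_0}$, contradicting $\varphi > 0$ on all of $B_1(0)$ together with $\varphi \equiv 0$ outside — the reflected cap $\Sigma_{\lambda_0}^\lambda$ sticks out of the ball). By Theorem~\ref{thmj2a} (and Remark~\ref{51}) applied on $\Sigma_{\lambda_0}$, we get $\psi_{\lambda_0} > 0$ throughout $\Sigma_{\lambda_0}$ for \emph{every} $\varphi \in \omega(u)$. Now I push the plane: for small $\epsilon > 0$ the cap $\Sigma_{\lambda_0 + \epsilon}$ decomposes into a narrow collar near $T_{\lambda_0+\epsilon}$ (where Theorem~\ref{thmj1a}(i) applies) plus a compactly contained region $K \subset\subset \Sigma_{\lambda_0}$ on which, by the compactness of $\omega(u)$ in $C_0$ and the uniform strict positivity of $\psi_{\lambda_0}$ on compact subsets, one has $\psi_{\lambda_0} \geq c_0 > 0$; a continuity/uniform-convergence argument transfers this to $w_{\lambda_0+\epsilon}(x,t) \geq 0$ on $K$ for $t$ large. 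Applying the narrow-region principle on the collar (with $w_{\lambda_0+\epsilon}\geq 0$ already known on $\Sigma_{\lambda_0+\epsilon}\setminus(\text{collar})$) yields $\psi_{\lambda_0+\epsilon}\geq 0$ on all of $\Sigma_{\lambda_0+\epsilon}$, contradicting the maximality of $\lambda_0$. Hence $\lambda_0 = 0$, i.e. $\psi_0 \geq 0$ on $\Sigma_0$. Running the same argument from the right (plane moving leftward from $\lambda = 1$) gives $\psi_0 \leq 0$ on $\{x_1 > 0\}$, equivalently $\varphi(x^0) \leq \varphi(x)$ for $x_1 > 0$. The two inequalities combine to $\varphi(x^0) = \varphi(x)$: $\varphi$ is symmetric about $T_0 = \{x_1 = 0\}$. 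Since the $x_1$-direction was arbitrary, $\varphi$ is radially symmetric about the origin.

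\textbf{Step 3 (monotonicity).} For each $\lambda \in (-1,0)$, Step 2 (now in the \emph{strict} form from Theorem~\ref{thmj2a}) gives, for every non-identically-zero family, $\psi_\lambda > 0$ on $\Sigma_\lambda$, which is exactly the statement $\varphi(x^\lambda) > \varphi(x)$ for $x \in \Sigma_\lambda$; by radial symmetry this says $\varphi$ is a strictly decreasing function of $|x|$ on $B_1(0)$. (If instead every $\psi_\lambda \equiv 0$, radial symmetry plus the exterior Dirichlet condition would again force $\varphi \equiv 0$, the first alternative.)

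\textbf{Main obstacle.} The delicate point is Step 2 — specifically, transferring strict positivity of the $\omega$-limit $\psi_{\lambda_0}$ on compact sets into a uniform lower bound on $w_{\lambda_0+\epsilon}(x,t)$ for all large $t$, simultaneously over the whole family $\omega(u)$. This requires the compactness of $\omega(u)$ in $C_0(\mathbb{R}^N)$, the uniform convergence $\mathrm{dist}_{C_0}(u(\cdot,t),\omega(u))\to 0$, and a uniform-in-$t$ continuity estimate for $w_\lambda$ in $\lambda$ (from the parabolic regularity of \cite{XFR} and the Lipschitz dependence of $f$); it also uses the fact that all the key principles of Section~2 are stated asymptotically (with $\underline{\lim}_{t\to\infty}$) precisely so that these limiting arguments close. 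The rest is the standard bookkeeping of the moving-plane machinery together with the maximum principles Theorems~\ref{thmj1a}, \ref{thmwj2a}, and \ref{thmj2a}.
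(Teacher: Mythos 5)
Your overall moving-plane skeleton (narrow region principle to start, definition of $\lambda_0$, compactness of $\omega(u)$ to get uniform constants, antisymmetric strong maximum principle to push past $\lambda_0$) matches the paper's Steps 1--2. But there is a genuine gap at the pivotal point of Step 2: you invoke Theorem \ref{thmwj2a} (the asymptotic strong maximum principle) to conclude that every $\varphi\in\omega(u)$ is positive throughout $B_1(0)$. Theorem \ref{thmwj2a} requires hypothesis $(F)$, i.e. $f(t,0)=0$, $f_u(t,0)<-\sigma$ with $f_u$ continuous near $u=0$ (its proof goes through Lemma \ref{limit lemma}, which uses exactly these conditions to build the decaying supersolution $\xi(t)$). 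Theorem \ref{thmj5a} assumes only that $f$ is Lipschitz in $u$ and $f(t,0)\geq 0$, which does not imply $(F)$ (e.g. $f(t,u)=1+u$ satisfies \eqref{46a} but not \eqref{eq:j103}); indeed the remark following Theorem \ref{thmj5a} explicitly says that assuming $(F)$ is a strictly stronger hypothesis yielding a stronger conclusion. A telltale sign of the gap is that your argument never actually uses \eqref{46a}, even though it is the only structural assumption on $f$ beyond regularity.

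What is missing is precisely the step the paper supplies in place of Theorem \ref{thmwj2a}: one must show, for \emph{each} $\varphi\in\omega(u)$ with $\varphi\not\equiv 0$, that $\psi_{\lambda_0}>0$ somewhere in $\Sigma_{\lambda_0}$, so that Theorem \ref{thmj2a} becomes applicable. The paper argues by contradiction: if $\bar\psi_{\lambda_0}\equiv 0$ for some $\bar\varphi$, then since $\lambda_0<0$ the reflected cap leaves the ball, forcing $\bar\varphi(x_0)=0$ at some interior point $x_0\in B_1(0)$; passing to the limiting equation for $u_\infty$ (with $u_\infty(\cdot,1)=\bar\varphi$), one has $\frac{\partial u_\infty}{\partial t}(x_0,1)\leq 0$ and, because $\bar\varphi\not\equiv 0$ in $B_1(0)$ while $\bar\varphi(x_0)=0$,
$$(-\lap)^s u_\infty(x_0,1)=C_{N,s}\,P.V.\int_{B_1(0)}\frac{-u_\infty(y,1)}{|x_0-y|^{N+2s}}\,dy<0,$$
so $\tilde f(1,0)<0$, contradicting \eqref{46a}. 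This is the only place $f(t,0)\geq 0$ enters, and without it (or a substitute) your dichotomy ``$\psi_{\lambda_0}$ is not identically zero, hence positive somewhere'' does not get off the ground; the same issue propagates to your Step 3, where strict monotonicity again needs $\psi_\lambda>0$ somewhere for each $\lambda\in(-1,0)$. The remainder of your argument (the uniform lower bound on $\overline{\Omega_{\lambda_0-\delta}}$ via compactness of $\omega(u)$, the collar treated by the narrow region principle, and the reduction of radial symmetry to the arbitrariness of the direction) is in line with the paper.
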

\begin{remark}
By Theorem \ref{thmwj2a}, if we assume $f$ satisfies more
stronger conditions $(F)$ in Theorem \ref{thmj5a}, we can obtain stronger result, that is either all $ \varphi (x) \in \omega (u)$ are identically 0 or all $ \varphi (x) \in \omega (u)$ are radially symmetric and decreasing about some point in $\mathbb R^N$.
\end{remark}
Let $T_\lambda, \Sigma_\lambda,  x^\lambda, u_\lambda, w_\lambda, \varphi$  and $\psi_\lambda$ be defined as  in Section 1.
Set $$\Omega_\lambda= \Sigma_\lambda \cap B_1(0)=\{x\in B_1(0) \mid x_1< \lambda\},$$
then $w_\lambda$ satisfies
\begin{equation}\label{13}
\left\{
\begin{array}{ll}
   \frac{\partial w_\la}{\partial t}+(-\lap)^s w_{\lambda}   = c_\lambda (x,t) w_\lambda  ,~&(x,t)\in \Omega_\lambda\times (0,\infty), \\
  w_\lambda(x,t)=- w_\lambda(x^\lambda,t), ~&(x,t)     \in \Omega_\lambda\times(0,\infty) ,
\end{array}
\right.
\end{equation}
where $c_\lambda (x,t)=\frac{f(t,u_\lambda(x,t))-f(t,u(x,t))}{u_\lambda(x,t)-u(x,t)}$.

We will carry on the proof in two steps. Choose any ray from the origin as the positive $x_1$ direction. First we show that for $\lambda>-1$ but sufficiently close to $-1$, we have for all $\varphi \in \omega(u)$
\begin{equation}\label{eq:jj21}
\psi_{\lambda}(x) \geq0, \;\; \forall \, x \in \Omega_\lambda.
\end{equation}
This provides the starting point to move the plane. Then we move the plane $T_{\lambda}$ toward the right as long as inequality (\ref{eq:jj21}) holds to its limiting position. Define
\begin{equation}\label{eq:jj22}
\lambda_0 = \sup \{\lambda \leq0 \mid  \psi_\mu(x)\geq 0, ~\mbox{for~ all } ~\varphi \in \omega(u),~ x \in \Omega_{\mu},~ \mu \leq \lambda \}.
\end{equation}
We will show that $\lambda_0 =0$. Since $x_1$ direction can be chosen  arbitrarily, we deduce that for any $\varphi \in \omega(u)$,   $\varphi(x)$ is radially symmetric and monotone decreasing about the origin. We now show the details in the two steps.
\begin{proof}[\bf Proof of Theorem \ref{thmj5a}]
For any  $ \varphi \in \omega(u)$, if $\varphi(x) \equiv 0,$ the conclusion of Theorem \ref{thmj5} is trivial. Without loss of generosity, we assume that for any $ \varphi \in \omega(u)$, $\varphi \not\equiv 0\ \text{in } B_1(0)$.

{\em Step 1.} We show that for $\lambda> -1$ and sufficiently close to $-1$,
\begin{equation}\label{eq:jjw21}
\psi_{\lambda}(x)\geq0, \;\; \forall \, x \in \Omega_\lambda,~\mbox{for~all}~\varphi \in \omega(u).
\end{equation}

The Lipschitz continuity assumption of $f$ guarantees that $c_\lambda(x,t)$ is  bounded. We also have  $$
  w_\lambda(x,t)\geq 0 , ~x      \in \Sigma_\lambda  \backslash \Omega_\la ,~t \in (0,\infty)
 $$
  since $u(x,t)=0,~(x,t)\in B^c_1(0) \times (0,\infty).$
Combining with \eqref{13}, we can apply Theorem \ref{thmj1} $(\textbf{i})$ to conclude that \eqref{eq:jjw21} holds.

{\em Step 2.}
We will show that $\lambda_0$ defined in \eqref{eq:jj22} satisfies
\begin{equation*}
\lambda_0  = 0 .
\end{equation*}
 Suppose that $\lambda_0  < 0$. We will prove that  $T_{\lambda_0}$ can be moved further to the right a little bit, and this is a contradiction with the definition of $\lambda_0$.

By \eqref{eq:jj22}, we have $$
\psi_{\lambda_0 }(x)\geq 0 ~\mbox{for~ all } ~\varphi \in \omega(u),~ x \in \Omega_{\lambda_0 }.
$$
First we prove that for any $\varphi \in \omega(u)$, there exists $x_\varphi\in\Sigma_{\lambda_0 }$ such that
\be\label{48}
\psi_{\lambda_0 }(x_\varphi)>0.
\ee
If not, there exists some $\bar{\varphi }\in \omega(u)$ such that $$
\bar{\psi}_{\lambda_0 }(x)=\bar{\varphi }_{\lambda_0 }(x)-\bar{\varphi }(x)\equiv0\ \ \text{in}\ \Sigma_{\lambda_0 }.
$$
Also, by the outer condition of $u$, we have $\bar{\varphi }(x)\equiv0$ in $B_1^c(0)\cap\Sigma_{\lambda_0 }$, therefore, there exists $x_0\in B_1(0)$ such that $\bar{\varphi }(x_0)=0$.

For this $\bar{\varphi },$ there exists $t_k$ such that $u(x,t_k)\rightarrow\bar{\varphi }(x)$ as $t_k\rightarrow \infty$.
 Then, by a similar translation and regularity process to \eqref{49}, we have
$$
 \frac{\partial u_\infty}{\partial t}(x,t)+(-\lap)^su_\infty(x,t)= \tilde{f}(t,u_\infty(x,t)),
$$
and $u_\infty(x,1)=\bar{\varphi }(x)$. Noting that $u_\infty(x,t)\geq0$, we obtain  $\frac{\partial u_\infty}{\partial t}(x_0,1)\leq0$ and
$$
(-\lap)^su_\infty(x_0,1)=C_{N,s}P.V.\int_{B_1(0)}\frac{- u_\infty(y,1)}{|x_0-y|^{N+2s}}dy
<0,
$$
where we used in the last inequality that $u_\infty(y,1)\not\equiv0$ in $B_1(0)$ $($since $\bar{\varphi }\not\equiv0$ in $B_1(0))$.

As a result, $\tilde{f}(1,u_\infty(x_0,1))=\tilde{f}(1,0)<0,$ which contradicts \eqref{46a}. So, we obtain \eqref{48}.

Now by the {\em  asymptotic strong maximum principle for antisymmetric functions} (Theorem \ref{thmj2}), we have for all $\varphi\in \omega(u)$
\be\label{16}\psi_{\lambda_0 }(x)>0,~x\in \Omega_{\lambda_0 }.\ee
Thus for any $\delta>0$ small, for each $\psi_{\la_0 }\big(\text{corresponding to }\varphi\in\omega(u)\big)$, there exists a constant $C_{\varphi}>0$  (depending on $\varphi$), such that
\be \label{eq:4w2222}
 \psi_{\lambda_0 }(x)\geq C_{\varphi}>0, ~x\in \overline{\Omega_{\lambda_0 -\delta}}.
 \ee

We claim further that, for all $\varphi \in \omega(u)$, there exists a universal constant $C_0$ such that
\begin{equation}\label{eq:jj51}
\psi_{\lambda_0 }(x)\geq C_0>0,~x\in \overline{\Omega_{\lambda_0 -\delta}}.
\end{equation}
Otherwise, there exists a sequence of functions $\{\psi_{\lambda_0}^k\} \big(\text{corresponding to }\{\varphi^k\} \subset  \omega(u)\big)$ and a sequence of points $\{x^k\}\subset \overline{\Omega_{\lambda_0-\delta}}$ such that
\be\label{15} \psi_{\lambda_0}^k(x^k)< \frac{1}{k}.\ee
Due to the compactness of $ \omega(u)$ in $C_0( \overline{B_1(0)})$,  there exists $\psi^0_{\lambda_0}\big(\text{corresponding to some }\ \varphi^0 \in \omega(u)\big)$ and $x_0\in\overline{\Omega_{\lambda_0-\delta}}$ such that

$$\psi_{\la_0}^k(x^k)\rightarrow \psi^0_{\la_0}(x^0)~\ \ \mbox{as}~k\rightarrow \infty.$$
Now due to \eqref{15}, we obtain
$$\psi_{\lambda_0}^0(x^0)=0.$$
This contradicts \eqref{eq:4w2222}, since $\varphi^0 \in \omega(u)$. Hence \eqref{eq:jj51} must be true.

From \eqref{eq:jj51} and the continuity of $\psi_\lambda$ with respect to $\lambda$, for each $\psi_\lambda\big(\text{corresponding to }\varphi\in \omega(u)\big)$, there exist $\varepsilon_{\varphi}(\text{depending on}\  \varphi) >0$ such that
$$  \psi_\lambda (x) \geq\frac{C_0}{2}>0,~x\in \overline{\Omega_{\lambda_0-\delta}},\ \ \ \ ~\forall \ \lambda \in (\lambda_0,\lambda_0+\varepsilon_{\varphi}).$$

Through a similar compactness argument as in deriving \eqref{eq:jj51}, we conclude that, for all $\psi_\lambda$, there exists a universal $\varepsilon >0$ such that
\begin{equation}\label{eq:jj26}
\psi_\lambda (x) \geq \frac{C_0}{2}>0,~x\in \overline{\Omega_{\lambda_0-\delta}},\ \ \ \ ~\forall \ \lambda \in (\lambda_0,\lambda_0+\varepsilon).
\end{equation}

As a consequence, for $t$ sufficiently large, we have
$$w_\lambda(x,t)\geq0,\ \ ~x\in \overline{\Omega_{\lambda_0-\delta}},\ \ \ \ ~\forall \ \lambda \in (\lambda_0,\lambda_0+\varepsilon).$$

Since $\delta>0$ is small, also, by \eqref{eq:jj26}, we can choose $\varepsilon>0$ small, such that $\Omega_\lambda \backslash \Omega_{\lambda_0-\delta}$  is a narrow region for $\lambda \in (\lambda_0,\lambda_0+\varepsilon)$, thus applying
{\em the  asymptotic narrow region principle} (Theorem \ref{thmj1}), we arrive at
\begin{equation}\label{eq:jj27}
\psi_\lambda(x) \geq 0,~ \forall ~x\in \Omega_\lambda \backslash \Omega_{\lambda_0-\delta}.
\end{equation}
Combining \eqref{eq:jj26} and \eqref{eq:jj27}, we conclude that
$$\psi_\lambda(x) \geq 0,~\forall~x\in \Omega_\lambda,~\forall\ \lambda\in (\lambda_0,\lambda_0+\varepsilon),\ \ \ \forall\varphi\in \omega(u).$$
This contradicts the definition of $\lambda_0$. Therefore, we must have $\lambda_0 = 0$.
It follows that for   all $\varphi\in \omega(u)$
$$\psi_0(x) \geq 0,~\forall~x\in \Omega_0,$$
or equivalently, for   all $\varphi\in \omega(u)$
\begin{equation}\label{eq:jj28}
\varphi(-x_1,\cdots,x_N)\leq \varphi(x_1,\cdots,x_N),~0<x_1<1.
\end{equation}
Since the $x_1$-direction can be chosen arbitrarily, \eqref{eq:jj28} implies that all $\varphi(x) $ are radially
symmetric about the origin.

The monotonicity can be deduced from $$ \psi_\lambda(x)>0,~x\in \Omega_\lambda,\ \forall \ -1<\lambda <0.$$
which can be derived through a similar process as in the proof for \eqref{16},
Now we complete the proof of Theorem \ref{thmj5a}.
\end{proof}

\section{Asymptotic symmetry of solutions in \texorpdfstring{$\mathbb R^N$}{R}}
\label{s:prescribing}
In this section, we will use an asymptotic method of moving planes to prove asymptotic symmetry of solutions to problem
\begin{equation}\label{eq:j1a}
\frac{\partial u}{\partial t}+(-\lap)^s u=f(t,u),~(x,t) \in \mathbb R^N \times (0,\infty).
\end{equation}

\begin{theorem} \label{thmj6a}
Assume $f$ satisfies $(F)$. Let $u(x,t) \in \big( C^{1,1}_{loc}(\mathbb R^N)\cap {\mathcal L}_{2s} \big)\times C^1\big((0,\infty)\big)$ be a positive uniformly bounded solution of \eqref{eq:j1} satisfies
 \begin{equation}\label{eq:jj200a}
\underset{|x|\rightarrow \infty }{\lim}~ u(x,t) =0,\ \ \text{uniformly for sufficiently large }t.
\end{equation}

Then we have

either all $ \varphi (x) \in \omega (u)$ are identically 0 or

all $ \varphi (x) \in \omega (u)$ are radially symmetric and decreasing about some point in $\mathbb R^N$. That is,  there exists $\tilde x \in \mathbb R^N$ such that $$ \varphi(x-\tilde x)=\varphi(|x-\tilde x|),~x\in \mathbb R^N.$$
\end{theorem}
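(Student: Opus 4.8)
The plan is to run an asymptotic method of moving planes in an arbitrary direction, using the principles of Section~2, and then assemble the resulting symmetries over all directions. First I would settle the trivial alternative: applying the asymptotic strong maximum principle (Theorem~\ref{thmwj2a}) with $\Omega=\mathbb R^N$ gives the dichotomy that either $\varphi\equiv 0$ for all $\varphi\in\omega(u)$ (the first alternative), or every $\varphi\in\omega(u)$ is strictly positive on $\mathbb R^N$; I would then assume the latter. Fix an arbitrary direction as the $x_1$-axis and use the notation $T_\lambda,\Sigma_\lambda,x^\lambda$, $w_\lambda=u_\lambda-u$, $\psi_\lambda=\varphi_\lambda-\varphi$, recalling that $w_\lambda$ is antisymmetric about $T_\lambda$ and solves $\partial_t w_\lambda+(-\Delta)^s w_\lambda=c_\lambda w_\lambda$ on $\Sigma_\lambda\times(0,\infty)$ with $c_\lambda(x,t)=f_u(t,\theta_\lambda(x,t))$, $\theta_\lambda$ between $u$ and $u_\lambda$.

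To start the planes near $x_1=-\infty$: using the uniform decay \eqref{eq:jj200a} together with $f_u(t,0)<-\sigma$ and continuity of $f_u$ near $0$, one checks that for $\lambda\le -M$ with $M$ large (and $\bar t$ large) one has $c_\lambda(x,t)<-\sigma$ exactly at the points of $\Sigma_\lambda\times[\bar t,\infty)$ where $w_\lambda(x,t)<0$ (there $0\le u(x^\lambda,t)<u(x,t)$ are both small), and $w_\lambda(x,t)\to 0$ as $|x|\to\infty$ in $\Sigma_\lambda$ uniformly for large $t$ (since $|x|\to\infty$ in $\Sigma_\lambda$ forces $|x^\lambda|\to\infty$). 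Theorem~\ref{1a} with $\Omega=\Sigma_\lambda$ then gives $\psi_\lambda\ge 0$ on $\Sigma_\lambda$ for all $\lambda\le -M$ and all $\varphi$. I would set $\lambda_0^-:=\sup\{\lambda:\psi_\mu\ge 0 \text{ in }\Sigma_\mu \text{ for all } \varphi\in\omega(u),\ \mu\le\lambda\}$, which is finite ($>-\infty$ by the above, and $<\infty$ because a nonzero function decaying at infinity cannot satisfy $\varphi(x^\lambda)\ge\varphi(x)$ for all large $\lambda$). By continuity $\psi_{\lambda_0^-}\ge 0$ on $\Sigma_{\lambda_0^-}$ for all $\varphi$, and I claim at least one $\varphi$ is symmetric about $T_{\lambda_0^-}$: otherwise Theorem~\ref{thmj2a} forces $\psi_{\lambda_0^-}>0$ everywhere in $\Sigma_{\lambda_0^-}$ for every $\varphi$, a compactness argument in $C_0(\mathbb R^N)$ (as for \eqref{eq:jj51} in Theorem~\ref{thmj5a}) gives a uniform positive lower bound on compact subsets away from $T_{\lambda_0^-}$, and decomposing $\Sigma_{\lambda_0^-+\varepsilon}$ into such a compact set, a narrow slab along $T_{\lambda_0^-+\varepsilon}$ (where $c_\lambda$ is bounded above), and a neighborhood of infinity (where $c_\lambda<-\sigma$), Theorem~\ref{41a} propagates $\psi_\lambda\ge 0$ slightly past $\lambda_0^-$, contradicting the definition. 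Running the mirror argument from $x_1=+\infty$ produces $\lambda_0^+$ with the analogous properties, and comparing the two sign statements on $\Sigma_\mu$ for $\mu$ strictly in between shows $\lambda_0^-\le\lambda_0^+$.

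If $\lambda_0^-=\lambda_0^+=:\lambda_0$, then $\psi_{\lambda_0}\ge 0$ and $\psi_{\lambda_0}\le 0$ on $\Sigma_{\lambda_0}$ for every $\varphi\in\omega(u)$, so all of them are symmetric about the single plane $T_{\lambda_0}$ and this direction is done. The hard part is the case $\lambda_0^-<\lambda_0^+$: here I would pick $\lambda$ strictly between the two values and construct, on the half-space $\mathbb R^N\setminus\Sigma_\lambda$, an \emph{antisymmetric} subsolution $\Psi(x,t)$ of the $w_\lambda$-equation lying below $w_\lambda$ on the complement of a suitable compact set $D$ within that half-space and at the initial time; exploiting the antisymmetry means the comparison need only be imposed on that half-space complement, not on all of $\mathbb R^N\setminus D$, and then the maximum principle for antisymmetric functions (Lemma~\ref{thm4w98} and its variants, in mirror-image form) pins down the sign of $w_\lambda$ on $\mathbb R^N\setminus\Sigma_\lambda$ in a way incompatible with the defining properties of $\lambda_0^\pm$. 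Designing $\Psi$ to be antisymmetric and obtaining the accompanying asymptotic decay estimates on $w_\lambda$ is where the genuinely new ideas enter and is the principal obstacle of the whole argument.

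It remains to assemble. Once every $\varphi\in\omega(u)$ is symmetric about a common hyperplane $H_e\perp e$ for each direction $e$, Theorem~\ref{thmj2a} applied to the nonzero $\psi_\mu$ yields strict monotonicity of each $\varphi$ in the direction $e$ on each side of $H_e$. I would then fix any nonzero $\varphi_0\in\omega(u)$ and a maximum point $\tilde x$ of $\varphi_0$: restricting $\varphi_0$ to the line through $\tilde x$ in direction $e$, it is symmetric about, and strictly decreasing away from, the point where that line meets $H_e$, so its maximum along the line is attained there; since $\tilde x$ is a global maximum, $\tilde x\in H_e$. Hence every $H_e$ passes through $\tilde x$, so each $\varphi\in\omega(u)$ is symmetric about every hyperplane through $\tilde x$ and is therefore radially symmetric about $\tilde x$ and strictly decreasing in $|x-\tilde x|$, which is the second alternative.
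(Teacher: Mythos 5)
Your Steps 1 and 2 (the dichotomy via Theorem \ref{thmwj2a}, the start of the moving planes via Theorem \ref{1a}, the propagation past $\lambda_0^-$ via compactness in $C_0(\mathbb{R}^N)$, Theorem \ref{thmj2a} and Theorem \ref{41a}, and the final assembly over all directions through a common maximum point) match the paper's argument and are fine; your assembly paragraph is in fact more explicit than the paper's. But there is a genuine gap at the heart of the matter: the case $\lambda_0^-<\lambda_0^+$, i.e.\ the proof of \eqref{eq:4w4300}, is precisely what the paper identifies as its major task, and your proposal only gestures at it ("where the genuinely new ideas enter") without carrying it out. Announcing that one should build an antisymmetric subsolution and invoke Lemma \ref{thm4w98} is not a proof; the entire difficulty is the construction and the estimates, and without them the theorem is not established.

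Moreover, the mechanism you sketch for the contradiction is not the right one. If $\lambda_0^-<\lambda_0^+$, there is nothing contradictory about the signs of $w_\lambda$ on the half space per se: for $\lambda$ in between one has two distinct limit profiles, $\hat\psi_\lambda>0$ and $\bar\psi_\lambda<0$ on $\tilde\Sigma_\lambda$, both perfectly consistent with the definitions of $\lambda_0^\pm$. The contradiction in the paper comes from the \emph{time oscillation} of $w_\lambda(x,t)$ between these two profiles along sequences $t_n<\bar t_n$ with $u(\cdot,t_n)\to\hat\varphi$ and $u(\cdot,\bar t_n)\to\bar\varphi$: this forces a first time $T_n\in(t_n,\bar t_n)$ at which $w_\lambda(\cdot,T_n)$ vanishes somewhere on $\partial D$ (see \eqref{eq:4w4307}), and the whole construction — the global lower bound $w_\lambda\ge e^{-\theta(t-t_n)}\min\{0,\inf_{\tilde\Sigma_\lambda}w_\lambda(\cdot,t_n)\}$ with $\inf w_\lambda(\cdot,t_n)\to 0$, the antisymmetric barrier $\zeta(x,t)=e^{-\theta(t-t_n)}\bigl(\tilde w_\mu(x,t)-\tau h(x)\bigr)$ built from the reflected-and-extended $\tilde w_\mu$ and the step function $h$ (Lemmas \ref{lem4w92}--\ref{lem4w93}), and a second local barrier $\underline{w}$ near the touching point $\bar x$ — is designed to show $w_\lambda(\bar x,T_n)>0$, contradicting the vanishing at $T_n$. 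None of this structure (the two time sequences, the first-touching time, the specific antisymmetric extension $\tilde w_\mu$, the role of $h$ and of the uniform Lipschitz bound near $T_\mu$, the choice of $\rho_1,\rho_2,\delta_0,\tau,\theta$) appears in your outline, and "incompatible with the defining properties of $\lambda_0^\pm$" does not by itself produce a contradiction. Until you supply this step, the proof of Theorem \ref{thmj6a} is incomplete.
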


Let $T_\lambda, \Sigma_\lambda,  x^\lambda, u_\lambda, w_\lambda, \varphi$  and $\psi_\lambda$ be defined as in Section 1.
Then from \eqref{eq:j1a}, we have $w_\lambda(x,t)$ satisfies
\begin{equation}\label{17}
\left\{
\begin{array}{ll}
   \frac{\partial w_\la}{\partial t}+(-\lap)^s w_{\lambda}= c_\lambda (x,t) w_\lambda  ,~& (x,t)\in \Sigma_\lambda\times (0,\infty),\\
  w_\lambda(x,t)=- w_\lambda(x^\lambda,t), ~&(x,t)     \in \Sigma_\lambda\times(0,\infty) ,
\end{array}
\right.
\end{equation}
where $$c_\lambda (x,t)=f_u(t, \xi_\lambda(x,t)),\ \ \xi_\lambda(x,t)\ \text{is valued between}\ u_\lambda(x,t)\ \text{and}\ u(x,t).$$

\begin{proof}[\bf Proof of Theorem \ref{thmj6a}]
If all $\varphi\in \omega(u)$ are identically $0$ not holds,   without loss of generality,  we assme that there is some $\varphi\in \omega(u)$ which is positive somewhere in $\mathbb R^N$.
Then, we can deduce from Theorem \ref{thmwj2} that for all $\varphi\in \omega(u)$,
\be\label{44} \varphi(x)>0,~x\in \mathbb R^N.\ee
We divide the proof into three steps.
\subsection{Step 1: \texorpdfstring{$\lambda$}{BB} sufficiently negative.}

We prove that for $\lambda$ sufficiently negative,
\be\label{43}
\psi_\lambda(x)>0\ \ \  \text{in}\ \Sigma_\lambda,\ \ \forall\ \varphi\in \omega(u).\ee
\medskip
We first show that,
\be\label{23}
\psi_\lambda(x)\geq0\ \ \  \text{in}\ \Sigma_\lambda,\ \ \forall\ \varphi\in \omega(u).\ee

Since $f_u(t,0)<-\sigma$ and $f_u$ is continuous near $u=0$, then there exists a $\beta>0$ such that for any
$$
0 \leq \eta <\beta,
$$
we have \be\label{19}
f_u(t,\eta)<-\sigma.
\ee
 For this $\beta>0$, by \eqref{eq:jj200a}, there exists $R>0$ large, such that for any $|x|>R$, we have
\be\label{20}
0< u(x,t)<\beta,\ \ \text{uniformly for sufficiently large }t.
\ee

At the points where $w_\lambda(x,t)<0$, we have
\be\label{22}
u_\lambda(x,t)\leqslant\xi(x,t)\leqslant u(x,t).\ee

Combing
\eqref{19}, \eqref{20} and \eqref{22}, we obtain for sufficiently large $t$,
\be\label{21}
c_\lambda (x,t)=f_u
(t,\xi_\lambda(x,t))<-\sigma,\ \ \ \forall\  |x|>R,\ \ \lambda\in \mathbb{R}.
\ee
Also, by \eqref{eq:jj200a}, we have
 \be\label{24}
 \underset{|x|\rightarrow \infty}{\underline{\lim}}~w_\lambda(x,t)= 0,\ \ \lambda\in \mathbb{R},\ \ \text{uniformly for sufficiently large }t.
 \ee
Hence, for $\lambda\leqslant-R$, combining \eqref{17},   \eqref{21} and \eqref{24}, applying Theorem \ref{1} with
$\Omega=\Sigma_\lambda$,  we obtain \eqref{23}.

To apply the strong maximum principle to derive (\ref{43}), we only need to verify that $\psi_\lambda$ is positive somewhere.

Actually, by \eqref{44}, for any $r>0 \; (r<R)$, $\varphi\in \omega(u)$, there exists a constant $C_{r,\varphi}>0$ such that
$$
\varphi(x)\geqslant C_{r,\varphi}>0,~x\in B_r(0).$$
From \eqref{eq:jj200a}, for this $C_{r,\varphi}>0$, there exists some $\lambda_\varphi\leqslant-R$ satisfying
$$
\varphi(x)\leqslant \frac{C_{r,\varphi}}{2},~x\in B_r(0^{\lambda_\varphi}).
$$
Due to the compactness of $\omega(u) $ in $C_0(\mathbb{R}^N)$, the above $C_{r,\varphi}$ and $\lambda_\varphi$ can be chosen uniformly with respect to all $\varphi$ in $\omega(u)$, denote them by $C_r$ and $\lambda$, respectively.
Then
\be\label{45}
\psi_\lambda(x)\geqslant\frac{C_r}{2}>0,\ \ x\in B_r(0^\lambda),\ \ \ \forall\ \varphi\in \omega(u).
\ee
Combining \eqref{23}, \eqref{45}  and the  boundedness of $c_\lambda(x,t)$,  we deduce \eqref{43} by Theorem \ref{thmj2}.

\subsection{Step 2: Move the plane to the rightmost limiting position.}

Inequality \eqref{23} provides a starting point, from which we move the plane $T_\lambda$ toward the right as long as (\ref{23}) holds to its limiting position. More precisely, let
$$ \lambda_0^- = \sup \{ \lambda \mid    \psi_\mu (x)  >0, ~ \forall \varphi\in \omega(u), \; x \in \Sigma_\mu, \; ~ \mu \leq \lambda \}. $$

We prove that

$(i)$ there is at least some  $\varphi(x) \in \omega(u)$ which is symmetric about the limiting plane $T_{\lambda_0^-}$, that is
\begin{equation}\label{eq:j110}
 \psi_{\lambda_0^-}(x) \equiv 0 , \;\; x \in \Sigma_{\lambda_0^-},~\mbox{for~some}~\varphi(x)\in \omega(u);
\end{equation}
and

$(ii)$ $\partial_{x_1} \varphi (x) > 0,~ x \in \Sigma_{\la_0^-},\ \ \mbox{ for all } \varphi \in \omega(u).$
\medskip

Suppose that \eqref{eq:j110} is false, we will prove that there exists $\varepsilon_0>0$ such that for all $\varphi \in \omega(u)$
\be \label{eq:4w00}
\psi_\la(x) > 0,~ x \in \Sigma_\la,~\ \forall\ \la\in(\la_0^-,\la_0^-+\varepsilon_0).
\ee

In fact, if \eqref{eq:j110} does not hold, then for every $\varphi \in \omega(u)$,  there exists $x_\varphi\in \Sigma_{\lambda_0^-}$ such that $\psi_{\lambda_0^-}(x_\varphi)>0$. Also, observing that $\psi_{\lambda_0^-}(x)\geq0$ in $\Sigma_{\lambda_0^-}$,  by the  asymptotic strong maximum principle for antisymmetric functions (Theorem \ref{thmj2}), we have
\be\label{eq:j420}
\psi_{\lambda_0^-} (x) > 0 , \;\; \forall \, x \in \Sigma_{\lambda_0^-},\ \ \forall \ \varphi \in \omega(u).
\ee


Let $R$ be the same as in \eqref{21}.

We first consider the case $x\in  \overline{\Sigma_{\la_0^--\delta}\cap B_R(0)}$ for any given small $\delta>0$. By \eqref{eq:j420}, similar to the proof of \eqref{eq:jj26},
We derive that there exists $C_0>0$
and $\varepsilon_0>0$ such that
\begin{equation}\label{eq:j111}
\psi_{\lambda}(x)\geq\frac{C_0}{2}> 0,~ x\in \overline{ \Sigma_{\la_0^--\delta}\cap B_R(0)},~\la\in(\la_0^-,\la_0^-+\varepsilon_{0}),\ \ \forall \ \varphi \in \omega(u),
\end{equation}
which implies that for $t$ large
\begin{equation*}
w_{\lambda}(x,t)\geq 0,~ x\in \overline{ \Sigma_{\la_0^--\delta}\cap B_R(0)},~\la\in(\la_0^-,\la_0^-+\varepsilon_{0}),\ \ \forall \ \varphi \in \omega(u).
\end{equation*}
Now by \eqref{21}, \eqref{24} and the boundedness of $c_\lambda(x,t)$,  applying Theorem \ref{41a}, we have

\be\label{27}
\psi_{\lambda}(x)\geq 0,~ x\in\Sigma_{\la}\Big\backslash\overline{ \Sigma_{\la_0^--\delta}\cap B_R(0)},~\la\in(\la_0^-,\la_0^-+\varepsilon_{0}),\ \ \forall \ \varphi \in \omega(u).
\ee
Above all, combining \eqref{eq:j111} and \eqref{27}, we obtain
for all $\varphi \in \omega(u)$,
$$\psi_\la(x) \geq 0,~ x \in \Sigma_\la,~\ \forall\ \la\in(\la_0^-,\la_0^-+\epsilon_0).
$$
Now \eqref{eq:j111} enable us to employ the {\em  asymptotic strong maximum principle for anti-symmetric functions} (Theorem \ref{thmj2}) to arrive at \eqref{eq:4w00}, which contradicts the definition of $\la_0^-$. This verifies \eqref{eq:j110}.

$(ii)$ We prove that for each $\varphi \in \omega(u)$, it holds
\be \label{weq:4w4444}
\partial_{x_1} \varphi (x) > 0,~ x \in \Sigma_{\la_0^-}.
\ee

Indeed, fix any $\la<\la_0^-$, by the definition of $\la_0^-$, we have for any $\varphi \in \omega(u)$,
$$\psi_\la(x)>0.\ \ \forall x\in \Sigma_\lambda.$$
Due to the bounded-ness of $c_\lambda(x,t)=f_u
(t,\xi_\lambda(x,t))$, we are now able to apply

\begin{lemma}\label{lem4wa} (Asymptotic Hopf lemma for antisymmetric functions \cite{CWp})
Assume that $$ w_\lambda (x,t) \in(C^{1,1}_{loc}(\Sigma_{\lambda})\cap {\mathcal L}_{2s} )\times C^1((0,\infty))$$is bounded and satisfies
\begin{equation*}
\left\{\aligned
&\frac{\partial w_\lambda }{\partial t}+(-\Delta)^sw_\lambda=c_\lambda(x,t)w_\lambda,  &  (x,t) \in  \Sigma_{\lambda} \times (0,\infty),\\
&w_\lambda(x^\lambda,t)=-w_\lambda(x,t),  & (x,t) \in   \Sigma_{\lambda} \times (0,\infty),\\
&\underset{t \rightarrow \infty}{\underline{\lim}} w_\lambda(x,t)\geq 0 , &  x  \in  \Sigma_{\lambda} ,
\endaligned \right.
\end{equation*}
where
$$\underset{x\rightarrow \partial \Sigma_\la}{\underline{\lim}}c_\lambda(x,t)=o(\frac{1}{[dist(x,\partial \Sigma_\lambda)]^2}), \ \ \text{uniformly for sufficiently large $t$}.$$
If $\psi_\lambda>0$ somewhere in $\Sigma_\la$. Then $$ \frac{\partial \psi_\lambda}{\partial \nu}(x)<0,~x\in \partial \Sigma_\lambda,$$ where $\nu$ is an outward normal vector.
\end{lemma}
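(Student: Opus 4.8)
The plan is to reduce the claim about the $\omega$-limit $\psi_\lambda$ to a boundary estimate for a single limiting profile obtained through a time translation and parabolic compactness, and then to establish that estimate by comparison with a carefully designed antisymmetric barrier near $T_\lambda$, invoking the antisymmetric maximum principle of Lemma~\ref{thm4w98}. (Note that the conclusion is meaningful: in the applications $w_\lambda=u_\lambda-u$ with $u\in C^{1,1}_{loc}(\mathbb R^N)$, so $w_\lambda$, hence $w_\infty$ and $\psi_\lambda$, is $C^1$ up to $T_\lambda$, its antisymmetric extension being $C^{1,1}_{loc}$ across $T_\lambda$.)

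\textbf{Step 1 (passage to a limiting profile).} Since $\psi_\lambda>0$ somewhere in $\Sigma_\lambda$, Theorem~\ref{thmj2a} gives $\psi_\lambda(x)>0$ for all $x\in\Sigma_\lambda$. Pick $t_k\to\infty$ with $w_\lambda(\cdot,t_k)\to\psi_\lambda$, set $w_k(x,t)=w_\lambda(x,t+t_k-1)$, and invoke the parabolic regularity estimates of \cite{XFR}, exactly as in the proof of Theorem~\ref{thmj2a}, to pass to a subsequence with $w_k\to w_\infty$ and $c_\lambda(\cdot,\cdot+t_k-1)\to c_\infty$ on $\Sigma_\lambda\times[0,2]$. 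Then $w_\infty$ is antisymmetric about $T_\lambda$, solves $\partial_t w_\infty+(-\Delta)^sw_\infty=c_\infty w_\infty$ on $\Sigma_\lambda\times[0,2]$, satisfies $w_\infty\ge0$ there (because $t+t_k-1\to\infty$ and $\liminf_{t\to\infty}w_\lambda\ge0$), has $w_\infty(\cdot,1)=\psi_\lambda$, and $c_\infty$ inherits the degeneracy bound $c_\infty(x,t)=o(\dist(x,T_\lambda)^{-2})$ as $x\to T_\lambda$, uniformly for $t\in[0,2]$. Continuity in $t$ together with $\psi_\lambda>0$ yields, for each compact $K\subset\subset\Sigma_\lambda$, constants $\varepsilon_0,c_K>0$ with $w_\infty\ge c_K$ on $K\times[1-\varepsilon_0,1+\varepsilon_0]$. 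Since the outward normal at $T_\lambda=\partial\Sigma_\lambda$ is $+e_1$ and $w_\infty(\cdot,1)=\psi_\lambda$, it now suffices to prove $\partial_{x_1}w_\infty(x^0,1)<0$ for every $x^0\in T_\lambda$.

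\textbf{Step 2 (antisymmetric barrier and comparison).} Fix $x^0\in T_\lambda$. Choose $q=x^0-re_1\in\Sigma_\lambda$ and $\rho>0$ with $B_\rho(q)\subset\subset\Sigma_\lambda$ and $w_\infty\ge m>0$ on $\overline{B_\rho(q)}\times[1-\varepsilon_0,1+\varepsilon_0]$, and pick $\delta\in(0,r-\rho)$. With $m_0>0$ chosen so that $m_0+c_\infty>0$, set $\tilde w=e^{m_0 t}w_\infty$, let $\eta\in C^\infty_0((1-\varepsilon_0,1+\varepsilon_0))$ with $\eta\equiv1$ near $t=1$, and define
$$\underline w(x,t)=\chi_{B_\rho(q)\cup B_\rho(q^\lambda)}(x)\,\tilde w(x,t)+\varepsilon\,\eta(t)\,g(x),$$
where $g$ is antisymmetric about $T_\lambda$, supported in $B_\delta(x^0)$, nonnegative in $\Sigma_\lambda$, and equals $\lambda-x_1$ in a neighbourhood of $x^0$, so $g(x^0)=0$ and $\partial_{x_1}g(x^0)=-1$. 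As in \eqref{mthm11}, the identity $|x-y^\lambda|^2-|x-y|^2=4\,\dist(x,T_\lambda)\,\dist(y,T_\lambda)$ together with $\tilde w\ge m$ on $B_\rho(q)$ gives $(-\Delta)^s(\chi\tilde w)(x)\le-c'\,\dist(x,T_\lambda)$ on $B_\delta(x^0)\cap\Sigma_\lambda$ with $c'>0$ independent of $\varepsilon$, while $(-\Delta)^sg$ is bounded on $B_\delta(x^0)$ and, by antisymmetry, vanishes linearly at $T_\lambda$. Choosing $\varepsilon$ small, one checks
$$\partial_t\underline w+(-\Delta)^s\underline w\le c_\infty(x,t)\,\underline w\quad\text{in }(B_\delta(x^0)\cap\Sigma_\lambda)\times(1-\varepsilon_0,1+\varepsilon_0),$$
that $\underline w\le\tilde w$ on $(\Sigma_\lambda\setminus B_\delta(x^0))\times[1-\varepsilon_0,1+\varepsilon_0]$ and at $t=1-\varepsilon_0$, and that $\underline w$ is antisymmetric. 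Applying Lemma~\ref{thm4w98} to $v=\tilde w-\underline w$ on $(B_\delta(x^0)\cap\Sigma_\lambda)\times[1-\varepsilon_0,1+\varepsilon_0]$ (valid since $c_\infty$ is bounded from above after the $e^{m_0 t}$ rescaling, and the maximum principle runs on a finite time interval as in Lemma~\ref{3}) gives $w_\infty(\cdot,1)\ge e^{-m_0}\varepsilon g$ on $B_\delta(x^0)\cap\Sigma_\lambda$; since $g(x^0)=0$, $\partial_{x_1}g(x^0)=-1$ and $w_\infty(x^0,1)=0$, we conclude $\partial_{x_1}\psi_\lambda(x^0)\le-e^{-m_0}\varepsilon<0$. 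As $x^0\in T_\lambda$ is arbitrary, $\partial_\nu\psi_\lambda<0$ on $\partial\Sigma_\lambda$.

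\textbf{Main obstacle.} The delicate step is the barrier construction near $T_\lambda$: there $\underline w$ must be a genuine subsolution of the linear equation in the region where both $\dist(\cdot,T_\lambda)$ and the allowed blow-up of $c_\lambda$ degenerate, and unlike in the interior construction used for Theorem~\ref{thmj2a}, the beneficial negative contribution $(-\Delta)^s(\chi\tilde w)$ is no longer bounded away from $0$ but degenerates exactly like $\dist(\cdot,T_\lambda)$. Success hinges on matching this rate with the linear vanishing of $g$ and on the hypothesis $c_\lambda=o(\dist(\cdot,T_\lambda)^{-2})$, which (together with $g\asymp\dist(\cdot,T_\lambda)$) keeps the term $c_\infty\underline w$ from overwhelming the sign of $(-\Delta)^s(\chi\tilde w)$; in all applications in this paper $c_\lambda=f_u$ is bounded, in which case the balancing is immediate. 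Everything else is a routine adaptation of the arguments already developed for Theorems~\ref{thmj2a} and~\ref{thmwj2a}.
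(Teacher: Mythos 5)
You should first note that the paper does not prove this lemma at all: it is quoted verbatim from the preprint \cite{CWp}, so there is no internal proof to compare against, and your proposal has to stand on its own. Its skeleton (time-translation to a limiting profile $w_\infty$ with $w_\infty(\cdot,1)=\psi_\lambda$, interior positivity via Theorem \ref{thmj2a}, then an antisymmetric barrier $\chi_{B_\rho(q)\cup B_\rho(q^\lambda)}\tilde w+\varepsilon\eta(t)g$ with $g$ vanishing linearly on $T_\lambda$, compared via Lemma \ref{thm4w98}) is sensible, and when $c_\lambda$ is bounded near $T_\lambda$ — which is the only case actually used in this paper, where $c_\lambda=f_u$ — the computation does close: $(-\Delta)^s(\chi\tilde w)\le -c'\,\dist(x,T_\lambda)$, all the $\varepsilon$-terms are $O(\varepsilon\,\dist(x,T_\lambda))$, and taking $\varepsilon$ small gives $\psi_\lambda\ge e^{-m_0}\varepsilon\,(\lambda-x_1)$ near $x^0$, hence the Hopf conclusion.

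The genuine gap is that this does not prove the lemma as stated, because the hypothesis only requires $c_\lambda(x,t)=o\big([\dist(x,\partial\Sigma_\lambda)]^{-2}\big)$, which allows $c_\lambda$ to be unbounded (in particular unboundedly negative) near $T_\lambda$, e.g. $c_\lambda\sim-\dist^{-1}$. Your ``Main obstacle'' paragraph asserts that $c=o(\dist^{-2})$ together with $g\asymp\dist$ keeps $c_\infty\underline w$ from overwhelming the gain, but the arithmetic goes the other way: $|c_\infty|\,g\le o(\dist^{-2})\cdot O(\dist)=o(\dist^{-1})$, which dominates the available gain $c'\,\dist$ as $\dist\to0$ for any fixed $\varepsilon$; the required inequality $\varepsilon(|c_\infty|+C)\le c'$ fails whenever the negative part of $c_\infty$ blows up, no matter how slowly. (Choosing $g\asymp\dist^\gamma$ with $\gamma\ge3$ would restore the inequality but destroys the Hopf conclusion.) The same unboundedness also breaks two auxiliary steps: no constant $m_0$ with $m_0+c_\infty>0$ exists, and the appeal to Theorem \ref{thmj2a} presupposes bounded $c_\lambda$. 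So your argument proves a weaker lemma (bounded $c_\lambda$ near $T_\lambda$), sufficient for the applications in Section 4, but the stated hypothesis requires a different mechanism — e.g. working directly with the equation at points $x$ approaching $T_\lambda$ and exploiting the antisymmetric decomposition of $(-\Delta)^s w_\lambda$, whose term $2w_\lambda(x,t)\int_{\Sigma_\lambda}|x-y^\lambda|^{-N-2s}dy\sim w_\lambda(x,t)\,\dist(x,T_\lambda)^{-2s}$ is what the $o(\dist^{-2})$ condition is calibrated against in \cite{CWp} — rather than a positive barrier that must be a subsolution pointwise up to the boundary.
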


As a result of this lemma,
$$
\partial_{x_1} \psi_\la(x)\big|_{x\in T_\lambda}<0,\ \ \forall \ \varphi \in \omega(u).
$$
Noticing that
$$
\partial_{x_1} \psi_\la(x)\big|_{x\in T_\lambda}=-2 \partial_{x_1} \varphi (x) \Big|_{x\in T_\lambda},
$$
we have
$$\partial_{x_1} \varphi (x) \Big|_{x\in T_\lambda}>0,\ \ \forall \ \varphi \in \omega(u).$$
By the arbitrariness of $\la<\la_0^-$, we obtain \eqref{weq:4w4444}.

\subsection{Step 3: \texorpdfstring{All $\omega$-limit functions are radially symmetric.}{BB1}  }

We will prove that $\psi_{\la_0^-}(x)\equiv 0$ for all $\varphi(x)$.
\medskip

In Step 2, we have shown that there is at least one $\varphi(x) \in \omega(u)$ which is symmetric about the limiting plane $T_{\lambda_0^-}$. We denote one of them by $ \hat \varphi$ such that
$$\hat \psi _{\la_0^-}(x)=\hat \varphi (x^{\la_0^-})-\hat \varphi (x)\equiv 0.$$

Applying the same  method of moving planes starting from $\la$ near $+\infty$ and proceeding analogously as in the steps above, we obtain a  $\la_0^+ \geq \la_0^-$ and $\bar \varphi \in \omega(u)$ such that
$$\bar \psi_{\la_0^+}(x)=\bar \varphi (x^{\la_0^+})-\bar \varphi (x) \equiv0.$$

Now we prove that
\be \label{eq:4w4300}
\la_0^-=\la_0^+.
\ee
Step 3 will be completed once we show  \eqref{eq:4w4300}.  In fact, by the definition of $\la_0^-$ and $\la_0^+$, for each $\varphi\in \omega(u)$  we have
$$ \psi_{\la_0^-}(x)\geq 0,~x\in \Sigma_{\la_0^-}~\mbox{and}~\psi_{\la_0^+} (x) \leq 0,~x\in  \Sigma_{\la_0^+}.$$
If $ \la_0^-=\la_0^+$, then
$$\psi_{\la_0^-}(x)\equiv 0,~x\in \Sigma_{\la_0^-}.$$

For convenience, we turn to conduct the following process in the right region of the planes $T_\lambda$, notes $$ \tilde \Sigma_\lambda= \{x\in \mathbb R^N \mid x_1> \lambda \}.$$

We establish \eqref{eq:4w4300} by a contradiction argument. Suppose that \eqref{eq:4w4300} is not valid, then $\la_0^-<\la_0^+$,
and consequently for any $\la \in (\la_0^-,\la_0^+)$, we have
\be \label{eq:4w4301}
\hat \psi_\la(x)>0,~x\in \tilde \Sigma_\la
\ee
and
\be \label{eq:4w4302}
\bar \psi_\la(x)<0,~x\in \tilde \Sigma_\la.
\ee
Indeed, in the case $x\in \tilde \Sigma_{\la_0^+}$,  let $x^\lambda$ be the reflection of  $x$ about the plane $T_\lambda$ and $x_{\lambda_0^+}$ be the reflection of $x^\lambda$ about the plane $T_{\lambda_0^+}$, as one can see from Figure 2 below,
\begin{center}
\begin{tikzpicture}[scale=0.8]
\draw  [black] [->,thick](0,0)--(9.5,0) node [anchor=north west] {$x_1$};
\draw  [red] (3,-3)--(3,5) node [black][ above] {$T_{\lambda_0^-}$};
\draw  [black] (4,-3)--(4,5) node [black][ above] {$T_{\lambda}$};
\draw  [blue] (5,-3)--(5,5) node [black][ above] {$T_{\lambda_0^+}$};
\draw[red,domain=0.45:5.55] plot(\x,-\x*\x+6*\x-5) node at (1,2){$\hat \varphi(x)$};
\draw[blue,domain=2.45:7.55] plot(\x,-\x*\x+10*\x-21) node at (7.2,2){$\bar \varphi(x)$};
\path (5.5,3.75)[very thick,fill=black]  circle(1.4pt) node [ font=\fontsize{10}{10}\selectfont] at (6.55,3.9) {$(x,\bar \varphi(x))$};
\draw [dashed] (5.5,3.75)-- (5.5,0);
\path (5.5,0)[very thick,fill=black]  circle(1.4pt) node at (5.5,-0.2) {$x$};
\path (2.5,-2.25)[very thick,fill=black]  circle(1.3pt) node [ font=\fontsize{8}{8}\selectfont] at (1.55,-2.15) {$(x^\lambda,\bar \varphi(x^\la))$};
\draw [dashed] (2.5,-2.25)-- (2.5,0);
\path (2.5,0)[very thick,fill=black]  circle(1.4pt) node at (2.5,0.35) {$x^\la$};
\path (7.5,-2.25)[very thick,fill=black]  circle(1.3pt) node  [ font=\fontsize{8}{8}\selectfont] at (8.8,-2.25) {$(x_{\lambda_0^+},\bar \varphi(x_{\lambda_0^+}))$};
\draw [dashed] (7.5,-2.25)-- (7.5,0);
\path (7.5,0)[very thick,fill=black]  circle(1.4pt) node at (7.5,0.35) {$x_{\lambda_0^+}$};
\node [below=1cm, align=flush center,text width=8cm] at (4,-1.76)
        {$Figure$ 2 };
\end{tikzpicture}
\end{center}
since $ \bar \psi _{\la_0^+}\equiv 0$, we have
$$ \aligned
\bar \psi _\la(x)=\bar \varphi(x^\la)-\bar \varphi(x)=\bar \varphi(x_{\la_0^+})-\bar \varphi(x)<0,
\endaligned$$
due to the fact that  $\bar \varphi$ is decreasing in $x_1$ for $x_1>\la_0^+$ (similar to \eqref{weq:4w4444}).
In the case $x$ is between $T_\la$ and $T_{\la_0^+}$, we use the fact that $\bar \varphi$ is increasing in $x_1$ for $x_1<\la_0^+$.
Hence \eqref{eq:4w4302} holds for each $\la<\la_0^+$. Similarly, \eqref{eq:4w4301} holds for
each $\la>\la_0^-$.

For each compact subset $D\subset\subset \tilde \Sigma_{\la}$, by  \eqref{eq:4w4301} and \eqref{eq:4w4302}, there is a constant $q>0$ such that
\be \label{eq:4w4303}
\hat \psi _\la (x) >q,~x\in \bar D
\ee
and
\be \label{eq:4w4304}
\bar \psi _\la (x) <-q,~x\in \bar D.
\ee
Since $\hat \varphi, ~\bar \varphi \in \omega(u)$, there are sequences $\{t_n\}$ and $\{\bar t_n\}$ with $t_n<\bar t_n$ such that
$$u(\cdot, t_n)\rightarrow \hat \varphi(\cdot),~u(\cdot, \bar t_n)\rightarrow \bar \varphi (\cdot).$$
For sufficiently large $n$ we have, by  \eqref{eq:4w4303} and  \eqref{eq:4w4304},
\be \label{eq:4w4306}
w _\la (x, t_n) > {q} ,~x\in \bar D
\ee
and
$$w _\la (x,\bar t_n) <- {q} ,~x\in \bar D.$$

It follows that there exists $T_n\in(t_n,\bar t_n)$ such that
\be \label{eq:4w4307}\aligned
& w_\la(x,t)>0,~x\in \bar D,~ t\in [t_n,T_n),\\
& w_\la(\cdot, T_n) ~\mbox{vanishes~somewhere ~on }~\partial D.
\endaligned \ee

We will derive a contradiction with the second part of \eqref{eq:4w4307}. To this end, we first establish two major estimates.
\medskip

{\em (i) A global lower bond estimate:}

\be w_\la(x,t) \geq e^{-\theta(t-t_n)} \min\{0,   \underset{\tilde \Sigma_\la}{\inf}\ w_\la(x,t_n) \},~x\in \tilde \Sigma_\la,~ t\in [t_n,T_n] \label{A104} \ee while by our choice of $t_n$,
\be ~\underset{\tilde \Sigma_\la}{\inf} \  w_{\la}(x,  t_n) \rightarrow 0 ~\mbox{as}~  n\rightarrow \infty.
\label{A107}
\ee

{\em (ii) A positive lower bond estimate on a compact subset of $D$}.
\smallskip

There is a $D_0 \subset \subset D$ and a constant $C_0>0$ such that
$$w_\la(x,t) \geq  e^{-\theta(t-t_n)} C_0 ,~t_n\leq t\leq T_n, \, x \in D_0.$$

We first derive $(i)$.  By \eqref{eq:j1a}, similar to \eqref{17}, $w_\la(x,t)$ satisfies
\begin{equation*}
\left\{
\begin{array}{ll}
  \frac{\partial w_\la}{\partial t}+(-\lap)^s w_\la(x,t)=f_u(t,\xi_\la(x,t))w_\la(x,t),~&(x,t)\in \tilde \Sigma_\la \times (0,\infty),\\
  w_\lambda(x,t)=- w_\lambda(x^\lambda,t), ~& (x,t)     \in \tilde \Sigma_\la\times(0,\infty) ,
\end{array}
\right.
\end{equation*}
where $ \xi_\lambda(x,t)\ \text{is between}\ u_\lambda(x,t)\ \text{and}\ u(x,t)$.

Since $w_\la(\cdot,t)>0$ in $D$ for $t\in[t_n,T_n]$,    we just need to prove that
\be \label{eq:jj402w}
w_{\la}(x,t)\geq e^{-\theta(t-t_n)} \min\{0,\underset{\tilde \Sigma_\la}{\inf}   w_{\la}(x,  t_n)\},~x \in  \tilde \Sigma_\la \backslash D,~t\in [ t_n,T_n].
\ee
We choose $D$ to be $\tilde \Sigma_{\mu+\delta_0} \cap B_{\rho_2}(0)$ with $\mu > \la_0^+$ and $\rho_2>\rho_1$.  $\rho_1\geqslant R$ is selected by \eqref{21} so that
\be\label{32} f_u(t,\xi_\la(x,t)):=c_\la(x,t) < -\sigma,~x\in \mathbb R^N,~t>0,~|x|\geq \rho_1,\ee
while $\rho_2$ will be determined later (See Figure 3 below).
\begin{center}
\begin{tikzpicture}[scale=1]
\draw  [black] (1,-2.5)--(1,4.3) node [black][ above] {$T_{\lambda}$};
\draw  [blue] (1.5,-2.5)--(1.5,4.9) node [black][ above] {$T_{\lambda_0^+}$};
\draw  [red] (2,-2.5)--(2,4.3) node [black][ above] {$T_\mu$};
\draw  [black] (2.5,-2)--(2.5,4);
 \fill[blue!50] (2.5,-2) arc(-90:90:3) node [black] [above right] {$D=\tilde \Sigma_{\mu+\delta_0}\cap B_{\rho_2}(0)$};
 \draw (2.5,-1.6) arc(-90:90:2.6);
 \path (5.15,1.2 ) node [black] {$\textbf{D}$};
 \draw [black] (2.8,-2)--(2.8,4);
 \draw[black,thin,<-] (1,-1)--(2.8,-1);
 \draw[black,thin,->] (1,-1)--(2.8,-1);
 \path (1.8,-0.8) node [black]  {$d$};
 \draw[black,thin,<-] (2,3)--(2.5,3);
 \draw[black,thin,->] (2,3)--(2.5,3);
 \path (2.3,3.3) node [black]  {$\delta_0$};
 \node (test) at (5.7,2.2) {$\partial B_{\rho_1}$};
    \draw [->,thin] (4.6,2.5) to [in = 90, out = 30] (5.7,2.5) (test);
    \node (test) at (6.4,0.9) {$\partial B_{\rho_2}$};
    \draw [->,thin] (5.5,1.2) to [in = 90, out = 30] (6.4,1.2) (test);
  \node [below=1cm, align=flush center,text width=8cm] at (2,-1.5)
        {$Figure$ 3 };
\end{tikzpicture}
\end{center}
If we denote $$\big(\tilde \Sigma_\la \backslash B_{\rho_1}(0)\big)\cup
\Big((\tilde \Sigma_\la \backslash \tilde \Sigma_{\mu+\delta_0} )\cap B_{\rho_1}(0)\Big)=:E,$$  then
$$\tilde \Sigma_\la \backslash D\subset E.$$
Choose $\theta>0$ small, set $ \tilde w(x,t)=e^{\theta(t-t_n)} w_\la(x,t)$. In order to prove $(\ref{eq:jj402w})$, we turn to prove \be\label{38}
\tilde w_{\la}(x,t)\geq \min\{0,\underset{\tilde \Sigma_\la}{\inf} \tilde w_{\la}(x,  t_n)\},~x \in E,~t\in [ t_n,T_n].
\ee
Obviously, for $(x,t)\in \tilde \Sigma_\la\backslash E\times[ t_n,T_n]\subset D\times[ t_n,T_n]$,
$$
\tilde w_{\la}(x,t)\geq \min\{0,\underset{\tilde \Sigma_\la}{\inf} \tilde w_{\la}(x,  t_n)\}.
$$
So, by \eqref{32} and $f_u(t,\xi_\la(x,t))$ is bounded in $B_{\rho_1}(0)$,  applying  similar proof with  Theorem \ref{41a}, corresponding to
\eqref{31}, we have (\ref{38}). Thus we obtain \eqref{eq:jj402w}.

We also obtain if $\underset{\tilde \Sigma_\la}{\inf} ~  w_{\la}(x,  t_n)<0  $
$$ \underset{\tilde \Sigma_\la}{\inf} ~  w_{\la}(x,  t_n) \geq -\varepsilon_n \rightarrow 0,~\mbox{as}~n \rightarrow \infty,$$
where the convergence follows from $w_\la(\cdot,t_n) \rightarrow \hat \psi_\la (x)>0$. This proves $(i)$.

Then, we verify $(ii)$.  For that we need construct  a subsolution $ {\mathcal \zeta}(x,t)$ of $w_\la(x,t)$ in $D$.

Let $$ L_\la \zeta(x,t)=\frac{\partial \zeta}{\partial t}(x,t)+(-\lap)^s \zeta(x,t)-c_\la (x,t) \zeta(x,t).$$
We have $L_\la w_\la(x,t)=0,~x\in \tilde \Sigma_\la,~t>0.$
We want $\zeta(x,t)$ to satisfy
\be \label{eq:4wc2020}
\left\{\begin{array}{ll}
L_\la \zeta(x,t)\leq 0,& x\in D,~ t_n\leq t\leq T_n,\\
\zeta(x,t_n)\leq w_\la(x,t_n),& x\in D,\\
\zeta(x,t)\leq w_\la(x,t),&  x\in \tilde \Sigma_\la \backslash  D,~t_n\leq t\leq T_n.
\end{array}\right.
\ee
Here $t_n$ is sufficiently large.

Actually our final subsolution will be $\Psi (x,t) = C \zeta (x,t)$ for some suitable constant to be determined later.

For simplicity of notation, denote $\la=0$ and $T_\la=T_0$.
Let
\be\label{50}\zeta(x,t)=e^{-\theta(t-t_n)}\big(\tilde w_\mu(x,t)-\tau h(x)\big),\ee
where $\tau>0$ is to be determined,
fix $\theta$ satisfies $$
0<\theta<\sigma,
$$
$$\tilde w_\mu(x,t)=\left\{\begin{array}{ll}
w_\mu(x,t),& x_1>\mu,~t>t_n,\\
0, & -\mu\leq x_1\leq \mu,~t>t_n\\
w_\mu(x_1+2\mu,x',t),& x_1<-\mu,  ~t>t_n
\end{array}\right.$$
is a modification of $w_\mu(x,t)$ so that it is antisymmetric about $T_0$, and $$h(x)=\left\{\begin{array}{ll} 1, &  x_1\geq 0,\\
-1, & x_1<0,
\end{array}\right.$$
with the following two results hold:
\be\label{57}
(-\lap)^s \tilde w_\mu(x,t) -(-\lap)^sw_\mu (x,t) <0,\ \ x_1> \mu,\ t>0.\ee
and
\be \label{58}
(-\lap)^s h(x) =\frac{C}{x_1^{2s}},\ \ x_1>0,
\ee
where $ C$ is a positive constant.

See Lemma \ref{lem4w90} and \ref{lem4w91} in the Appendix for the proof of \eqref{57} and \eqref{58}.

We will verify that the $\zeta(x,t)$ defined in \eqref{50} satisfies \eqref{eq:4wc2020} by the the following three Lemmas.
\begin{lemma}\label{lem4w92}(The differential inequality)
For $ D$ of the shape $\tilde \Sigma_{\mu+\delta_0} \cap B_{\rho_2}(0)$, we have
$$L_\la \zeta(x,t)\leq 0,~\forall~x\in D,~t_n\leq t \leq T_n.$$
\end{lemma}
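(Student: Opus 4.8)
Lemma \ref{lem4w92} follows by plugging the explicit expression \eqref{50} for $\zeta$ into $L_\la$ and then exploiting the two preparatory identities \eqref{57}, \eqref{58} together with the equation satisfied by $w_\mu$. Writing $\zeta=e^{-\theta(t-t_n)}\phi$ with $\phi=\tilde w_\mu-\tau h$, differentiating the exponential produces the extra term $-\theta\phi$, so
\begin{equation*}
L_\la\zeta=e^{-\theta(t-t_n)}\Big[\tfrac{\partial\tilde w_\mu}{\partial t}+(-\lap)^s\tilde w_\mu-\tau(-\lap)^sh-\big(c_\la(x,t)+\theta\big)\big(\tilde w_\mu-\tau h\big)\Big],
\end{equation*}
and the whole task reduces to showing the bracket is $\le 0$ on $D$. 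Since $D=\tilde\Sigma_{\mu+\delta_0}\cap B_{\rho_2}(0)$ lives in $\{x_1>\mu+\delta_0>\mu>0\}$, on $D$ we have $\tilde w_\mu=w_\mu$, $h\equiv1$, and by \eqref{58} the quantity $(-\lap)^sh(x)=Cx_1^{-2s}$ is strictly positive, lying between $C\rho_2^{-2s}$ and $C(\mu+\delta_0)^{-2s}$.

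The next step is to replace $(-\lap)^s\tilde w_\mu$ by $(-\lap)^sw_\mu$ using \eqref{57}, which is legitimate at the price of a strictly negative error, and then to use that $w_\mu$ solves $\tfrac{\partial w_\mu}{\partial t}+(-\lap)^sw_\mu=c_\mu(x,t)w_\mu$ on $\tilde\Sigma_\mu\supset D$ to collapse the first two terms of the bracket into $c_\mu w_\mu$. This gives, on $D\times[t_n,T_n]$,
\begin{equation*}
L_\la\zeta< e^{-\theta(t-t_n)}\Big[\big(c_\mu(x,t)-c_\la(x,t)-\theta\big)w_\mu(x,t)+\big(c_\la(x,t)+\theta\big)\tau-\tau Cx_1^{-2s}\Big].
\end{equation*}
Here one keeps in mind the standing facts that $0<\theta<\sigma$, that $w_\mu$ is uniformly bounded, and that by \eqref{32} both $c_\la$ and $c_\mu$ are $<-\sigma$ once $|x|\ge\rho_1$.

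Now I would dispose of $D$ by a case distinction on $|x|$. On the part of $D$ with $|x|\ge\rho_1$ one has $c_\la+\theta<\theta-\sigma<0$, so $(c_\la+\theta)\tau-\tau Cx_1^{-2s}\le(\theta-\sigma)\tau$, while $(c_\mu-c_\la-\theta)w_\mu$ is bounded above by a constant independent of $\tau$; taking $\tau$ large makes the bracket negative there. The part of $D$ with $|x|<\rho_1$ is where $c_\la$ carries no sign, and here I would use the freedom in the only unconstrained parameter, $\mu$ (required merely to satisfy $\mu>\la_0^+$): choosing $\mu$ large enough that $\mu+\delta_0>\rho_1$ forces $D\subset\{|x|>\rho_1\}$, so this region is empty and the first case covers all of $D$. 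If instead one is committed to a small $\mu$, this leftover piece is precompact and bounded away from $T_\mu$, so the strictly negative gain from \eqref{57} is uniform on it and must be balanced, together with the strictly negative $-\tau Cx_1^{-2s}$, against the now-bounded zeroth-order remainder by a careful joint choice of $\tau$, $\rho_2$ and $\delta_0$. Either way the bracket is $\le0$ on $D$, which is the assertion.

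The main obstacle is the coexistence of the two distinct coefficient functions: $L_\la$ carries $c_\la$ while $w_\mu$ obeys its equation with $c_\mu$, and these are in general incomparable, so the remainder $(c_\mu-c_\la-\theta)w_\mu$ must be absorbed without any help from the smallness of $c_\la$ in the subregion $|x|<\rho_1$. This is precisely the role of the artificial correction $-\tau h$: its fractional Laplacian $Cx_1^{-2s}$ is strictly positive and bounded below on $D$, and together with the damping factor $e^{-\theta(t-t_n)}$ (supplying the extra $-\theta$) and the strict inequality in \eqref{57} it provides the slack needed to beat the remainder. The delicate bookkeeping is getting the order and size of the parameters $\mu$, $\rho_2$, $\delta_0$, $\tau$ right relative to the fixed $\rho_1\ge R$.
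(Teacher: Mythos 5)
Your reduction to the key bracket is exactly the paper's: expand $L_\la\zeta$, use \eqref{57} to trade $(-\lap)^s\tilde w_\mu$ for $(-\lap)^s w_\mu$ at the cost of a strictly negative error, and use the equation $\frac{\partial w_\mu}{\partial t}+(-\lap)^s w_\mu=c_\mu w_\mu$, arriving at $(c_\mu-c_\la-\theta)w_\mu+\tau(\theta+c_\la)-\tau(-\lap)^s h$. The divergence, and the gap, is in how you handle the part of $D$ inside $B_{\rho_1}(0)$, where $c_\la$ has no sign. Your first fix, choosing $\mu$ so large that $\mu+\delta_0>\rho_1$, is not available: $\mu$ is constrained to $\la_0^+<\mu<d-\delta_0$ with $d$ fixed by \eqref{58}, it must be taken close to $\la$ so that $c_\mu-c_\la\le\theta/2$, and, more fatally, emptying $D\cap B_{\rho_1}(0)$ makes $D_0=\tilde\Sigma_{\la+d}\cap B_{\rho_1}(0)\cap D$ empty, which destroys Estimate (ii) (see \eqref{A121} and \eqref{A133}) that this lemma exists to serve. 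Your second device, ``take $\tau$ large,'' is likewise not available: $\tau$ is one fixed parameter in the definition \eqref{50} of $\zeta$ and must also satisfy the smallness condition $c_0-\tau\ge a_0$ of \eqref{A109}, without which $\zeta$ has no positive part on $D_0$ and the sub-solution is useless; with $\tau$ small your far-region argument still works, but your middle-region argument does not close.

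Your fallback for that middle region is not a proof either: you assert that the negative error from \eqref{57} is ``uniform'' on the compact piece, but that error is an integral of $w_\mu(y,t)$ over $\{y_1<\mu\}$ against a kernel difference, and no lower bound on $|w_\mu|$ there, uniform in $t\in[t_n,T_n]$, is available (indeed $w_\la(\cdot,T_n)$ vanishes somewhere on $\partial D$, so such uniformity is precisely what cannot be presumed); moreover $\rho_2$ and $\delta_0$ do not even enter the bracket on that region, so ``a careful joint choice'' of them cannot help, and $\tau\big((\theta+c_\la)-Cx_1^{-2s}\big)$ can be positive there since $Cx_1^{-2s}$ is only of size $C\rho_1^{-2s}$ while $c_\la$ may be positive inside $B_{\rho_1}(0)$. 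The paper closes this region with ingredients you omit: it splits $D$ at $x_1=\la+d$ and $|x|=\rho_1$; for $\mu<x_1<d$ the largeness of $(-\lap)^sh=Cx_1^{-2s}$ absorbs $\tau(\theta+c_\la)$; for $|x|\ge\rho_1$ it uses $\theta+c_\la\le\theta-\sigma<0$ as you do; and on the remaining compact set $\tilde\Sigma_{\la+d}\cap B_{\rho_1}(0)\cap D$ it uses the positive lower bound $w_\mu\ge c_0>0$ on $[t_n,T_n]$, the choice of $\mu$ close to $\la$ (continuity of $f_u$) so that the zeroth-order remainder becomes $-\tfrac{\theta}{2}w_\mu\le 0$ rather than merely bounded, and finally $\tau$ small so that $-\tfrac{\theta}{2}w_\mu+\tau(\theta+c_\la)\le 0$. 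Without the sign of $w_\mu$, the lower bound $c_0$, the closeness of $\mu$ to $\la$, and the smallness of $\tau$, the inequality on $D\cap B_{\rho_1}(0)$ is not established.
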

\begin{proof}[\bf Proof] Noting that in $D$ we have $\tilde w_\mu (x,t)=w_\mu(x,t)$, then by \eqref{57}, for $x_1>\mu+\delta_0$, we have
$$\aligned &L_\la \zeta(x,t)\\
=&\frac{\partial \zeta}{\partial t}(x,t)+(-\lap)^s \zeta(x,t)-c_\la (x,t) \zeta(x,t) \\
=& -\theta e^{-\theta(t-t_n)}\big(\tilde w_\mu(x,t) -\tau h(x)\big)+e^{-\theta(t-t_n)} \frac{\partial \tilde w_\mu}{\partial t}(x,t)\\&
+ e^{-\theta(t-t_n)}\big((-\lap)^s\tilde w_\mu(x,t) -\tau(-\lap)^sh(x)\big)- e^{-\theta(t-t_n)}c_\la(x,t)\big(\tilde w_\mu(x,t)-\tau h(x)\big)\\
=& e^{-\theta(t-t_n)}\Big\{-\theta w_\mu(x,t)+\theta\tau +\frac{\partial w_\mu}{\partial t}(x,t) +[(-\lap)^s\tilde w_\mu-(-\lap)^s w_\mu(x,t)]\\&+(-\lap)^s w_\mu(x,t)-\tau (-\lap)^s h(x)
-c_\la(x,t) w_\mu(x,t)+\tau c_\la(x,t)\Big\}\\
=& e^{-\theta(t-t_n)}\Big\{-\theta w_\mu(x,t)+\theta\tau +[(-\lap)^s\tilde w_\mu(x,t)-(-\lap)^s w_\mu(x,t)]\\
&-\tau (-\lap)^s h(x)+\big(c_\mu(x,t)-c_\la(x,t)\big)w_\mu(x,t)+\tau c_\la(x,t)\Big\}\\
< & e^{-\theta(t-t_n)}\Big\{[-\theta +c_\mu(x,t)-c_\la(x,t)]w_\mu(x,t)+\tau\big(\theta+c_\la(x,t)\big)-\tau (-\lap)^sh(x) \Big\}.
\endaligned$$
By \eqref{58}, there exists $d>\lambda_0^+$, such that for $0<x_1<d$,
$$\big(\theta+c_\la(x,t)\big)-(-\lap)^s h(x)\leq 0.$$
Choose $\mu$ such that $\lambda_0^+<\mu<d-\delta_0$ sufficiently close to $\la\ ($where $\delta_0>0$ small will be chosen later$)$, since $f_u(\cdot,u)$ is continuous in $u$ and thus in $x_1$, we have
$$  c_\mu(x,t)-c_\la(x,t)\leq \frac{\theta}{2}.$$
Hence
$$L_\la \zeta(x,t)\leq e^{-\theta(t-t_n)}[-\frac{\theta}{2}w_\mu(x,t)+\tau \big(\theta+c_\la(x,t)\big)-\tau(-\lap)^s h(x)].$$
That is, we obtain
\be\label{53}
L_\la \zeta(x,t)\leq0,\ \ ~\mu<x_1<d,~t_n\leq t \leq T_n.
\ee
By \eqref{32} and the the choice of $\theta$, We have for $|x|\geq \rho_1$,
$$\theta+c_\la(x,t)\leq 0.$$
So, for $x\in \tilde \Sigma_{\la+d} \cap B_{\rho_1}^c(0)\cap D $, we have
\be\label{54}
L_\la \zeta(x,t)\leq0, \ \ ~t_n\leq t \leq T_n.
\ee
Now for $x\in\tilde \Sigma_{\la+d} \cap B_{\rho_1}(0)\cap D $, there exists a constant $c_0>0$, such that
$$ w_\mu(x,t)\geq c_0,~\forall~t_n\leq t\leq T_n.$$
So, we can choose $\tau$ sufficiently small, such that
$$ -\frac{\theta}{2}w_\mu (x,t)+\tau \big(\theta+c_\la(x,t)\big) \leq 0.$$
As a result,
\be\label{55}
L_\la \zeta(x,t)\leq0, ~x\in \tilde \Sigma_{\la+d}\cap B_{\rho_1}(0) \cap D,~t_n\leq t\leq T_n.\ee
Combining \eqref{53}, \eqref{54} and \eqref{55}, we completes the proof of Lemma \ref{lem4w92}.
\end{proof}
\begin{remark}

In this step, we will also choose $\tau$ small to satisfy
\be c_0 - \tau \geq a_0
\label{A109}
\ee
for some positive constant $a_0$, which will be used later.
\end{remark}

\begin{lemma}\label{lem4w94}(The initial condition)
From \eqref{eq:4w4306}, we have $w_\la(x,t_n)> q$ (independent of large $n$). Denote
$$ \Psi(x,t)=q \frac{\zeta(x,t)}{\|\zeta(x,t_n)\|_{L^\infty(D)}}.$$
Then
$$ w_\la(x,t_n)\geq \Psi(x,t_n), \;\; \forall x \,\in D.$$
\end{lemma}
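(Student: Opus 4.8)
The plan is to trace through the definition \eqref{50} of $\zeta$ at the single time slice $t=t_n$, observe that on $D$ the truncated function $\tilde w_\mu$ and the sign factor $h$ are both inert, reduce $\zeta(\cdot,t_n)$ to $w_\mu(\cdot,t_n)-\tau$, bound this below by a fixed positive constant, and finally compare with the lower bound $w_\la(x,t_n)>q$ already recorded in \eqref{eq:4w4306}.

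First I would unwind \eqref{50}. Since the fixed $\la$ has been normalized to $0$, and $\mu>\la_0^+>0$, the set $D=\tilde\Sigma_{\mu+\delta_0}\cap B_{\rho_2}(0)$ is contained in $\{x_1>\mu+\delta_0\}\subset\tilde\Sigma_\mu\cap\{x_1>0\}$. Hence on $D$ one has $h(x)\equiv 1$ and $\tilde w_\mu(x,t)\equiv w_\mu(x,t)$; since the exponential prefactor equals $1$ at $t=t_n$, this yields $\zeta(x,t_n)=w_\mu(x,t_n)-\tau$ for $x\in D$.

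Next I would establish the positive lower bound on $\zeta(\cdot,t_n)$. Because $\mu>\la_0^-$, inequality \eqref{eq:4w4301} gives $\hat\psi_\mu>0$ throughout $\tilde\Sigma_\mu$, and $\overline{D}$ is a compact subset of $\tilde\Sigma_\mu$, so $\min_{\overline{D}}\hat\psi_\mu$ is a positive number, say $\geq c_0>0$ (taking $c_0$ at most the constant already used in Lemma \ref{lem4w92}). Since $u(\cdot,t_n)\to\hat\varphi$ forces $w_\mu(\cdot,t_n)\to\hat\psi_\mu$ uniformly on $\overline{D}$, for all large $n$ we get $w_\mu(x,t_n)\geq c_0$ on $\overline{D}$; recalling that $\tau$ is chosen small enough to satisfy \eqref{A109}, this gives $\zeta(x,t_n)=w_\mu(x,t_n)-\tau\geq c_0-\tau\geq a_0>0$ on $\overline{D}$. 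In particular $\|\zeta(\cdot,t_n)\|_{L^\infty(D)}$ is positive and finite (the latter by the uniform boundedness of $u$), so $0<\zeta(x,t_n)/\|\zeta(\cdot,t_n)\|_{L^\infty(D)}\leq 1$ on $D$, whence $0<\Psi(x,t_n)\leq q$ on $D$. Combining this with \eqref{eq:4w4306}, namely $w_\la(x,t_n)>q$ on $\overline{D}$, gives $w_\la(x,t_n)>q\geq\Psi(x,t_n)$ for all $x\in D$, as claimed.

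The argument itself is elementary; the only thing demanding attention is the bookkeeping. I would make sure to verify that $D$ lies strictly to the right of $T_0$ (so that the modification built into $\tilde w_\mu$ and the jump of $h$ play no role on $D$), and that the positive constant $c_0$ extracted from $w_\mu(\cdot,t_n)\to\hat\psi_\mu$ is taken consistently with the constant $c_0$ appearing in Lemma \ref{lem4w92} and with the smallness of $\tau$ fixed in \eqref{A109}; once these choices are coordinated there is no further obstacle.
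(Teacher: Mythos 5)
Your final inequality is correct, and the mechanism that actually delivers it is the one the paper treats as trivial: for $x\in D$ one always has $\zeta(x,t_n)\le\|\zeta(\cdot,t_n)\|_{L^\infty(D)}$, hence $\Psi(x,t_n)\le q<w_\la(x,t_n)$ by \eqref{eq:4w4306}. No sign information on $\zeta(\cdot,t_n)$ over all of $D$ is needed for this, only that $\|\zeta(\cdot,t_n)\|_{L^\infty(D)}>0$ so that $\Psi$ is well defined; and that follows from the positivity of $\zeta(\cdot,t_n)$ on the smaller set $D_0=\tilde\Sigma_{\la+d}\cap B_{\rho_1}(0)\cap D$, where \eqref{A109} genuinely applies.

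The intermediate claim in your write-up, namely $\zeta(x,t_n)=w_\mu(x,t_n)-\tau\ge c_0-\tau\ge a_0>0$ on all of $\overline D$, is however false, and the appeal to \eqref{A109} there conflates two different constants. The $c_0$ of \eqref{A109} is the lower bound for $w_\mu$ produced in Lemma \ref{lem4w92} only on $\tilde\Sigma_{\la+d}\cap B_{\rho_1}(0)\cap D$, not on all of $D$; the quantity you extract, $\min_{\overline D}\hat\psi_\mu$, is a different (and necessarily much smaller) number. Indeed, on the outer rim of $D$ no uniform bound by $\tau$ can hold: by the choice of $\rho_2$ in Lemma \ref{lem4w93} (inequality \eqref{A101}), $\tilde w_\mu(x,t)\le\tau/2$ for $|x|\ge\rho_2$ and all large $t$, so at points of $\overline D$ with $|x|=\rho_2$ (where $x_1\ge\mu+\delta_0>\mu$, so $\tilde w_\mu=w_\mu$) one gets $\zeta(x,t_n)\le\tau/2-\tau<0$ for large $n$, and by continuity $\zeta(\cdot,t_n)$ is negative slightly inside $D$ as well. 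Thus the asserted uniform positivity of $\zeta(\cdot,t_n)$ on $\overline D$, and with it the claim $0<\Psi(x,t_n)$ on $D$, cannot be ``coordinated'' with \eqref{A109}; the choices are made in the order $\tau$ first, then $\rho_2$, precisely so that $\tilde w_\mu$ drops below $\tau/2$ near $\partial B_{\rho_2}$. Fortunately this positivity is never used except to conclude $\zeta(x,t_n)/\|\zeta(\cdot,t_n)\|_{L^\infty(D)}\le 1$, which holds regardless of sign. Once the detour is deleted and the nondegeneracy of the denominator is justified on $D_0$ as above, your argument collapses to the paper's one-line proof and is correct.
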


The proof of this Lemma is trivial, and we skip it.

\begin{remark}
Obviously, this $\Psi(x,t)$ still satisfies the differential inequality
 $$L_\la \Psi(x,t)\leq 0.$$
\end{remark}

\begin{lemma}\label{lem4w93}(The exterior condition)
For $x\in   \tilde \Sigma_\la \backslash D,~t_n \leq t\leq T_n,$ we have $$ w_\la(x,t)\geq \Psi(x,t).$$
\end{lemma}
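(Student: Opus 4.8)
The plan is to split the complement $\tilde\Sigma_\la\setminus D$ into the three natural pieces dictated by the geometry in Figure 3 and the three-branch definition \eqref{50} of $\zeta$, and to check $w_\la\ge\Psi$ on each. Recall $\Psi(x,t)=q\,\zeta(x,t)/\|\zeta(\cdot,t_n)\|_{L^\infty(D)}$ with $\zeta(x,t)=e^{-\theta(t-t_n)}\big(\tilde w_\mu(x,t)-\tau h(x)\big)$, and that $D=\tilde\Sigma_{\mu+\delta_0}\cap B_{\rho_2}(0)$. First I would dispose of the region $\tilde\Sigma_\la\setminus\tilde\Sigma_{\mu+\delta_0}$, i.e. where $\la<x_1\le \mu+\delta_0$. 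Here one uses the global lower bound $(i)$ established above: by \eqref{A104}--\eqref{A107}, for $n$ large $w_\la(x,t)\ge e^{-\theta(t-t_n)}(-\varepsilon_n)$ on all of $\tilde\Sigma_\la$, with $\varepsilon_n\to 0$. On the other hand, on this slab $\tilde w_\mu(x,t)$ is either $0$ (when $-\mu\le x_1\le\mu$) or, by antisymmetry and the sign of $w_\mu$ coming from the moving-plane step, it satisfies $\tilde w_\mu(x,t)\ge 0$ for $x_1>0$ and $\le 0$ for $x_1<0$, so that $\tilde w_\mu(x,t)-\tau h(x)\le \tilde w_\mu(x,t)-\tau$ on $\{x_1>0\}$ need not be small — but $\Psi$ carries the small factor $q/\|\zeta(\cdot,t_n)\|_{L^\infty(D)}$, and more importantly $\zeta$ itself is antisymmetric about $T_0$, hence on the portion with $x_1\le 0$ we have $\zeta\le 0\le w_\la+$ (small), so the real content is the slab with $0<x_1\le\mu+\delta_0$. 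There $\tilde w_\mu=0$ when $x_1\le\mu$ so $\zeta=-\tau e^{-\theta(t-t_n)}h(x)=-\tau e^{-\theta(t-t_n)}<0\le w_\la+\varepsilon_n$-type bound once $\varepsilon_n$ is small relative to $\tau$; for $\mu<x_1\le\mu+\delta_0$ one uses continuity in $x_1$ of $w_\mu$ and shrinks $\delta_0$ so that $w_\mu(x,t)$ stays below $\tau$ there, giving $\tilde w_\mu-\tau h\le 0$.

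Second, on $\tilde\Sigma_{\mu+\delta_0}\setminus B_{\rho_2}(0)$, i.e. $x_1>\mu+\delta_0$ but $|x|\ge\rho_2$, I would use the decay hypothesis \eqref{eq:jj200a}: since $u(x,t)\to 0$ uniformly for large $t$ as $|x|\to\infty$, both $u_\mu$ and $u$ are tiny there, hence $w_\mu(x,t)=\tilde w_\mu(x,t)$ is uniformly small for $|x|\ge\rho_2$ with $\rho_2$ chosen large. Simultaneously $h(x)\equiv 1$ on $\{x_1>0\}$, so $\zeta(x,t)=e^{-\theta(t-t_n)}(\tilde w_\mu-\tau)\le e^{-\theta(t-t_n)}(\text{small}-\tau)<0$ once $\rho_2$ is large enough that the ``small'' term is $<\tau$. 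Thus $\Psi(x,t)<0$ there, while $w_\la(x,t)\ge -e^{-\theta(t-t_n)}\varepsilon_n$ by $(i)$; choosing $n$ large so that $q\varepsilon_n/\|\zeta(\cdot,t_n)\|_{L^\infty(D)}$ is dominated by $q\tau/\|\zeta(\cdot,t_n)\|_{L^\infty(D)}$ minus the ``small'' tail, we get $w_\la\ge\Psi$.

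The remaining — and I expect hardest — piece is the ``inner'' part of the complement that lies in $B_{\rho_2}(0)$ but outside $D$ only because of the $x_1$ cut, together with the matching at $\partial D$; here one must genuinely compare $w_\la$ with $\tilde w_\mu$. For $x$ with $x_1>\mu+\delta_0$ and $|x|\le\rho_2$ we are inside $D$, so there is nothing to prove; the subtlety is the ``shadow'' region $x_1<-\mu$ where $\tilde w_\mu(x,t)=w_\mu(x_1+2\mu,x',t)$ is a reflected copy. There $h(x)=-1$, so $\zeta=e^{-\theta(t-t_n)}(w_\mu(x_1+2\mu,x',t)+\tau)$, which is nonnegative and not small, so $\Psi$ is a genuine positive barrier and one must invoke antisymmetry of $w_\la$ about $T_0=T_\la$: $w_\la(x,t)=-w_\la(x^{\la},t)$, and $x^\la$ lies in $\{x_1>\la\}$ deep inside where we have already shown $\zeta\le 0$, hence $w_\la(x^\la,t)\le\Psi(x^\la,t)/(q/\|\zeta(\cdot,t_n)\|)\cdots$ — more cleanly, since $\Psi$ is itself antisymmetric about $T_0$ (as $\zeta$ is), proving $w_\la\ge\Psi$ on $\{x_1<-\mu\}$ is \emph{equivalent} to proving $-w_\la(x^\la,t)\ge -\Psi(x^\la,t)$, i.e. $w_\la\le\Psi$ on the reflected set $\{x_1>\mu\}\cap(\text{outside }D)$ where it reduces to the estimates already made. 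So the real work is organizing the antisymmetry bookkeeping: establish the claim on the half $\{x_1\ge 0\}\cap(\tilde\Sigma_\la\setminus D)$ first using $(i)$, the decay \eqref{eq:jj200a}, continuity of $c_\mu$ in $x_1$, and the freedom to fix $\delta_0,\rho_2,\tau$ small/large and then $n$ large last; then extend to $\{x_1<0\}$ for free by the common antisymmetry of $w_\la$ and $\Psi$ about $T_\la$. The main obstacle, as in the elliptic nonlocal case, is precisely that the exterior condition must hold on the whole complement rather than just $\partial D$, and this is overcome exactly by the antisymmetric construction of $\zeta$ (via $\tilde w_\mu$ and $h$), which lets the half-space maximum principle Lemma \ref{thm4w98} do the rest.
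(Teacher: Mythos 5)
Your verification on $\tilde \Sigma_\la \setminus D$ is, in substance, the paper's own proof: split off the slab $\la < x_1 \leq \mu+\delta_0$ (where $\tilde w_\mu$ is either identically $0$, or small near $T_\mu$ once $\delta_0$ is small) and the far region $|x|\geq \rho_2$ (where $\tilde w_\mu$ is small by the uniform decay), conclude that $\zeta$ is negative there, and beat this against the global estimate $(i)$, $w_\la \geq -\varepsilon_n e^{-\theta(t-t_n)}$ with $\varepsilon_n \to 0$, by taking $n$ large. Two points need tightening. First, you must keep a quantitative margin: $\tilde w_\mu \leq \frac{\tau}{2}$ (not merely $\tilde w_\mu - \tau h \leq 0$), uniformly in $t \in [t_n,T_n]$ and in $n$; pointwise ``continuity in $x_1$'' does not give this, whereas the uniform Lipschitz continuity of $w_\mu$ (inherited from $u$) yields $|w_\mu(x,t)| \leq C_2 (x_1-\mu)$ and lets you fix $\delta_0$ once and for all, exactly as the paper does. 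Together with the bound $\gamma_n = \|\zeta(\cdot,t_n)\|_{L^\infty(D)} \leq M$ (which you never state but implicitly use), this gives $\Psi \leq -e^{-\theta(t-t_n)}\frac{q\tau}{2M}$, a fixed negative margin that dominates $-\varepsilon_n e^{-\theta(t-t_n)}$; the bound $\Psi \leq 0$ alone would not suffice, since $w_\la$ itself may be negative in the exterior region.

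Second, your ``hardest'' third piece rests on a misreading of what the lemma asks. The shadow region $\{x_1 < -\mu\}$, and indeed all of $\{x_1 < \la\}$, is not contained in $\tilde \Sigma_\la \setminus D$, so no inequality is required there: the antisymmetric maximum principle (Lemma \ref{thm4w98}) needs the exterior condition only on the half-space complement, which is precisely why $\zeta$ was built antisymmetric via $\tilde w_\mu$ and $h$. Moreover, the claimed ``extension for free by antisymmetry'' of $w_\la \geq \Psi$ to $\{x_1 < \la\}$ is not only unnecessary but generally false: since both functions are antisymmetric, it would amount to $w_\la \leq \Psi$ on the reflected set, in particular on $D$, which is the opposite of the inequality the comparison argument is designed to produce there. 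Once that paragraph is discarded, your argument coincides with the paper's two-region estimate.
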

\begin{proof}[\bf Proof.]Denote $\gamma_n = \|\zeta(x,t_n)\|_{L^\infty(D)}$. From the expression of $\zeta (x, t)$, one can see that there exist $M>0$, such that
\begin{equation}  \gamma_n \leq M.
\label{A100}
\end{equation}

For any given $\rho_2 > 0$ (to be determined soon), divide $\tilde \Sigma_\la \backslash D$ into two parts:
$$D_1^C \equiv \{ x \in \tilde \Sigma_\la \backslash D \mid |x| > \rho_2 \} ~{\mbox{ and }}~ D_2^C \equiv \{ x \mid \lambda < x_1 \leq \mu + \delta_0, |x| \leq \rho_2 \}.$$

$a)$ In $D_1^C$,
since $\tilde{w}_\mu(x,t)\rightarrow 0$  as $|x|\rightarrow \infty$ uniformly for large $t$, then for sufficiently large $\rho_2>\rho_1$
as chosen in $(i)$, we have
\begin{equation} \tilde{w}_\mu(x,t) \leq   \frac{\tau}{2} \equiv \frac{\tau}{2} h(x), \;\;\; |x| \geq \rho_2, \;\;\;  ~{\mbox{ for all large t}}~ .
\label{A101}
\end{equation}

$b)$ In $D_2^C$, we have $\tilde w_\mu(x,t)=0,~\la <x_1<\mu $  and
$$\tilde w_\mu(x,t)=w_\mu(x,t),~\mu\leq x_1\leq \mu+\delta_0.$$
Since $w_\mu(x,t)$ is uniformly Lipschitz (from such property of $u$), there is $C_2 >0$, such that
$$ |w_\mu(x,t)|\leq C_2(x_1-\mu).$$
Choosing $\delta_0$ small, such that for $\mu<x_1<\mu+\delta_0$
$$C_2(x_1-\mu)<\frac{\tau}{2} \equiv \frac{\tau}{2} h(x).$$

It follows from this and (\ref{A101}),\ (\ref{A100}), for all $x \in \tilde \Sigma_\la \backslash D$,
\be \label{eq:cw2008}
\Psi(x,t) \leq - e^{-\theta (t-t_n)} \frac{q}{\gamma_n} \cdot \frac{\tau}{2} \leq - e^{-\theta (t-t_n)} \frac{q}{M} \cdot \frac{\tau}{2}
\ee

On the other hand,  by (\ref{A104}) and (\ref{A107}),
$$ w_\la(x,t)\geq -e^{-\theta(t-t_n)}\varepsilon_n,$$
$$\mbox{with}~ \varepsilon_n\rightarrow 0,~\mbox{as}~t_n\rightarrow \infty,~(-\varepsilon_n \leq \underset{\tilde \Sigma_\la}{\inf}w_\la(x,t_n)).$$
Then by (\ref{eq:cw2008}), for sufficiently large $n$, we have, for all $x \in \tilde \Sigma_\la \backslash D$,
\begin{equation*}
w_\la(x,t)- \Psi(x,t) \geq e^{-\theta(t-t_n)}[ -\varepsilon_n + \frac{q}{M} \cdot \frac{\tau}{2}] \geq 0.
\end{equation*}
This completes the proof of the Lemma \ref{lem4w93}.
\end{proof}
\medskip

Now we use $\Psi(x,t)$ as our subsolution.  Combining Lemmas \ref{lem4w92}, \ref{lem4w93}, \ref{lem4w94} and the {\em maximum principle for anti-symmetric functions} (Theorem \ref{thm4w98}),  we have
$$ w_\la(x,t)\geq \Psi(x,t),~x\in D,~t_n\leq t\leq T_n.$$
Choosing $D_0 \equiv \tilde \Sigma_{\la+d} \cap B_{\rho_1}(0)\cap D \subset\subset D$, then by (\ref{A109}), we have
\be \label{eq:wcw2033}
e^{\theta(t-t_n)}\zeta(x,t)=w_\mu(x,t)-\tau h(x) >a_0,~~x\in D_0,~t \in [t_n, T_n].
\ee

Then by the definition of $\Psi (x,t)$ and (\ref{A100}),
we have
$$ \Psi(x,t) \geq e^{-\theta(t-t_n)} \frac{q a_0}{M}, \;\; x \in D_0,\ t \in [t_n, T_n].$$
Consequently
\be
w_\lambda (x, t) \geq e^{-\theta(t-t_n)} \frac{q a_0}{M} \equiv e^{-\theta(t-t_n)} C_0, \;\; x \in D_0, t \in [t_n, T_n].
\label{A121}
\ee

This verifies {\em Estimate (ii)}.
\medskip

Now we use {\em Estimates (i)} and {\em (ii)}  to derive a contradiction with the second part of (\ref{eq:4w4307}), that is
$$ w_\la(\cdot, T_n) ~\mbox{vanishes~somewhere ~on }~\partial D.$$

Suppose for $\bar{x} \in \partial D$,
$$w_\lambda (\bar{x}, T_n) = 0.$$

Let $D_0$ be the compact subset of $D$ given in (\ref{A121}).

Let $\delta >0$ be small so that
\be
B_\delta (\bar{x}) \cap D_0 = \emptyset \mbox{ and } w_\lambda(x, t_n) > \frac{q}{2}, \;\; x \in B_\delta (\bar{x}).
\label{A130}
\ee

We will construct a sub-solution $\underline{w}(x,t)$ in $B_\delta(\bar{x}) \times [t_n, T_n]$, so that $\underline{w}(\bar{x}, T_n)>0$ to derive a contradiction.

First we modify $w_\lambda$. Let
$$\tilde{w}(x,t) = e^{-m(t-t_n)}w_\lambda (x,t).$$
Then
$$\tilde{L} \tilde{w} \equiv \frac{\partial \tilde{w}}{\partial t} + (-\lap)^s \tilde{w} - \tilde{C}(x,t) \tilde{w} = 0,$$
where
$$ \tilde{C}(x,t) = c_\lambda (x,t) -m.$$
Choose $m>0$, so that
$$ \tilde{C}(x,t) <0.$$

Let $$\phi(x) = (1-|x|^2)^s_+ \; \mbox{ and } \phi_\delta(x) = \phi\left(\frac{x-\bar{x}}{\delta}\right).$$
Then it is well-known that
$$(-\lap)^s \phi_\delta (x) = a_\delta, \;\;  x \in B_\delta (\bar{x})$$
for some constant $a_\delta$ depending on $\delta$.

Let $$\underline{w} (x,t) = \chi_{D_0}(x) \tilde{w}(x,t) + (2\phi_\delta(x) -1)\varepsilon_n e^{-(m+\theta)(t-t_n)},$$
where $$  \chi_{D_0}(x) = \left\{\begin{array}{ll}
1 & x \in D_0\\
0 & x \not\in D_0.
\end{array}
\right.
$$

We verify that $\underline{w} (x,t)$ is a sub-solution of $\tilde{w}(x,t)$ in the parabolic cylinder
$$B_\delta (\bar{x}) \times [t_n, T_n]. $$

First consider the initial condition at $t=t_n$ and $x \in B_\delta(\bar{x})$. We have
$$\underline{w} (x,t_n) = (2\phi_\delta(x) -1)\varepsilon_n \leq \varepsilon_n, $$
while by (\ref{A130}),
$$\tilde{w}(x,t_n) = w_\lambda (x, t_n) > \frac{q}{2}.$$
We choose $n$ sufficiently large, so that $\varepsilon_n < \frac{q}{2}$ to satisfy the initial condition.

Then we check the exterior condition in $(\tilde{\Sigma}_\lambda \setminus B_\delta (\bar{x})) \times [t_n, T_n]$.

For $x \in D_0$,
$$ \underline{w} (x,t) = \tilde{w}(x,t) - \varepsilon_n e^{-(m+\theta)(t-t_n)} \leq \tilde{w}(x,t), \; t \in [t_n, T_n],$$
while for $x \in \tilde{\Sigma}_\lambda \setminus D_0$ and $x \not \in B_\delta(\bar{x})$,
$$\underline{w}(x,t) = - \varepsilon_n e^{-(m+\theta)(t-t_n)} \leq e^{-m(t-t_n)}w_\lambda(x,t) = \tilde{w}(x,t), \; t \in [t_n, T_n].$$
This verifies the exterior condition.

Finally, we deduce the differential inequality
$$\tilde{L} \underline{w} (x,t) \leq 0, \;\; (x,t) \in B_\delta(\bar{x}) \times (t_n, T_n].$$

In fact, for $(x,t) \in B_\delta(\bar{x}) \times (t_n, T_n]$,
\begin{eqnarray} \tilde{L} \underline{w}(x,t) &=& -(m+\theta) (2\phi_\delta(x) -1) \varepsilon_n e^{-(m+\theta)(t-t_n)} + (-\lap)^s (\chi_{D_0}(x) \tilde{w}(x,t)) \nonumber \\
&+& 2 a_\delta \varepsilon_n e^{-(m+\theta)(t-t_n)} -\tilde{C}(x,t) (2\phi_\delta(x) -1)\varepsilon_n e^{-(m+\theta)(t-t_n)} \nonumber\\
&\leq& (-\lap)^s (\chi_{D_0}(x) \tilde{w}(x,t)) + C_1 \varepsilon_n e^{-(m+\theta)(t-t_n)}, \label{A132}
\end{eqnarray}
for some constant $C_1$ independent of $n$.

For each fixed $t \in [t_n, T_n]$ and for $x \in B_\delta (\bar{x})$, by (\ref{A121}),
\begin{eqnarray}
(-\lap)^s (\chi_{D_0}(x) \tilde{w}(x,t)) &=& C_{N,s} \int_{D_0} \frac{-\tilde{w}(y,t))}{|x-y|^{N+2s}} d y \nonumber \\
&\leq& - e^{-(m+\theta)(t-t_n)} C_0 C_{N,s} \int_{D_0} \frac{1}{|x-y|^{N+2s}} d y \nonumber \\
&\leq& - C_2 e^{-(m+\theta)(t-t_n)},
\label{A133}
\end{eqnarray}
with some positive constant $C_2$ independent of $n$.

For all  sufficiently large $n$, we have $\varepsilon_n C_1 \leq C_2$, and hence by (\ref{A132}) and (\ref{A133}),
$$ \tilde{L} \underline{w}(x,t) \leq 0 = \tilde{L} \tilde{w} (x,t), \;\; (x,t) \in B_\delta (\bar{x}) \times [t_n, T_n]. $$

Now by the {\em maximum principle}, we conclude
$$ \underline{w}(x,t) \leq  \tilde{w} (x,t), \;\; (x,t) \in B_\delta (\bar{x}) \times [t_n, T_n]. $$
In particular
$$  \tilde{w} (\bar{x},T_n) \geq \underline{w}(\bar{x},T_n) = \varepsilon_n e^{-(m+\theta)(T_n-t_n)}.$$
Consequently
$$ w_\lambda (\bar{x}, T_n) \geq \varepsilon_n e^{-\theta(T_n-t_n)} >0.$$
This contradicts our assumption that
$$ w_\lambda (\bar{x}, T_n) =0$$
and thus completes the proof of \eqref{eq:4w4300}.

Since the $x_1$-direction can be chosen arbitrarily, $\psi_{\la_0^-}(x)\equiv 0$ for all $\varphi \in \omega(u)$ means that all $\varphi$ are    radially symmetric and monotone decreasing about some point in $\mathbb R^N$.

\end{proof}

\section{Appendix}
\begin{lemma}\label{mlem1}
For any $x\in B_\delta(\bar{x})\times[0,2]$, $g(x)=(\delta^2-|x-\bar{x}|^2)_+^{s}-(\delta^2-|x-\bar{x}^\lambda|^2)_+^{s}$, we have
$$|(-\Delta)^sg(x)|\leq C_0,$$ where $C_0$ is a constant.
\end{lemma}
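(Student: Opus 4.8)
The plan is to split $g$ into its two radial bumps and treat them separately. First note that $(-\Delta)^sg$ has no $t$-dependence (the operator acts in $x$ only and $g=g(x)$), so the factor $[0,2]$ is irrelevant. Write $g=g_1-g_2$, where $g_1(x)=(\delta^2-|x-\bar x|^2)_+^{s}$ is supported in $\overline{B_\delta(\bar x)}$ and $g_2(x)=(\delta^2-|x-\bar x^\lambda|^2)_+^{s}=g_1(x^\lambda)$ is supported in $\overline{B_\delta(\bar x^\lambda)}$. For $g_1$ I would invoke the classical Getoor identity $(-\Delta)^s\big[(1-|z|^2)_+^{s}\big]\equiv\gamma_{N,s}$ on $B_1(0)$, with $\gamma_{N,s}>0$ an explicit constant depending only on $N$ and $s$. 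Since $g_1(x)=\delta^{2s}\big(1-|(x-\bar x)/\delta|^2\big)_+^{s}$ and $(-\Delta)^s$ is translation invariant and homogeneous of degree $2s$ under dilations, it follows that $(-\Delta)^sg_1(x)=\gamma_{N,s}$ for all $x\in B_\delta(\bar x)$ --- already constant, and independent of $\delta$.

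For the off-diagonal piece, observe that $B_\delta(\bar x)$ and $\overline{B_\delta(\bar x^\lambda)}$ are disjoint, since $|\bar x-\bar x^\lambda|=2\,\dist(\bar x,T_\lambda)\ge 2\delta$ by the choice of $\delta$. Hence for $x\in B_\delta(\bar x)$ the function $g_2$ vanishes on a neighbourhood of $x$, so no principal value is needed and $(-\Delta)^sg_2(x)=-C_{N,s}\int_{B_\delta(\bar x^\lambda)}\frac{g_2(y)}{|x-y|^{N+2s}}\,dy\le 0$. Consequently $(-\Delta)^sg(x)=\gamma_{N,s}+C_{N,s}\int_{B_\delta(\bar x^\lambda)}\frac{g_2(y)}{|x-y|^{N+2s}}\,dy\ge\gamma_{N,s}>0$, so the lower bound in the lemma is trivial and only an upper bound for the last integral must be produced. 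When $\bar x$ stays a definite distance from $T_\lambda$, so that $d_0:=\dist\big(B_\delta(\bar x),B_\delta(\bar x^\lambda)\big)>0$, this is immediate: using $0\le g_2\le\delta^{2s}$ and $|B_\delta(\bar x^\lambda)|=c_N\delta^{N}$ gives $C_{N,s}\int_{B_\delta(\bar x^\lambda)}\frac{g_2(y)}{|x-y|^{N+2s}}\,dy\le C_{N,s}c_N(\delta/d_0)^{N+2s}$, hence $|(-\Delta)^sg(x)|\le\gamma_{N,s}+C_{N,s}c_N(\delta/d_0)^{N+2s}=:C_0$.

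The step I expect to be the main obstacle is the borderline case in which $\overline{B_\delta(\bar x)}$ is tangent (or nearly tangent) to $T_\lambda$, since then $d_0$ may vanish and the crude bound degenerates. In that case one should not estimate $|x-y|$ by a single distance; instead, split $B_\delta(\bar x^\lambda)$ into a thin collar near the tangency point $p\in T_\lambda$ and its complement. On the complement $|x-y|$ is comparable to $\delta$ and the contribution is harmless; on the collar one exploits that $g_2(y)$ vanishes like $\dist\big(y,\partial B_\delta(\bar x^\lambda)\big)^{s}$ and passes to coordinates given by the signed distance to $T_\lambda$ together with the tangential directions, reducing the estimate to a model integral that is dilation invariant in $\delta$. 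Carrying this out --- or, more simply, arranging the construction so that $\overline{B_\delta(\bar x)}$ stays strictly inside $\Sigma_\lambda\setminus D$ (for instance by taking $\delta$ to be a fixed fraction of $\min\{\dist(\bar x,D),\dist(\bar x,T_\lambda)\}$, which forces $d_0\gtrsim\delta$) --- yields a genuinely universal constant $C_0$; everything else is the scaling identity for $g_1$ plus elementary bookkeeping.
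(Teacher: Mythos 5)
Your core step is the same one the paper uses: the Getoor identity $(-\Delta)^s(1-|z|^2)_+^s\equiv\gamma_{N,s}$ in $B_1(0)$, transported by translation invariance and by the exact cancellation of the $\delta^{2s}$ prefactor against the $\delta^{-2s}$ scaling of the operator, so that the bump centered at $\bar x$ contributes the constant $\gamma_{N,s}$ on $B_\delta(\bar x)$. Where you differ is the reflected bump: the paper's proof only records $(-\Delta)^s\phi_\delta^\lambda\equiv 1$ inside $B_\delta(\bar x^\lambda)$, which says nothing at the points $x\in B_\delta(\bar x)$ that the lemma is about, and then simply asserts that the difference is bounded; you instead write $(-\Delta)^s g_2(x)=-C_{N,s}\int_{B_\delta(\bar x^\lambda)}g_2(y)|x-y|^{-N-2s}\,dy$ and estimate this off-diagonal term, which is the real content of the lemma. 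Your bound $C_{N,s}c_N(\delta/d_0)^{N+2s}$ is correct whenever the two closed balls are separated by a positive distance $d_0$, so in that regime your argument is complete and in fact more careful than the paper's.

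The caveat you raise is genuine, but your first remedy cannot work: if $\overline{B_\delta(\bar x)}$ is tangent to $T_\lambda$, no collar refinement will produce a uniform constant, because the statement itself fails there. Near the tangency point $p$ the model computation gives $\int_{B_\delta(\bar x^\lambda)}g_2(y)|x-y|^{-N-2s}\,dy\sim c\,\big[\dist\big(x,\overline{B_\delta(\bar x^\lambda)}\big)\big]^{-s}$, since the numerator vanishes only like the $s$-th power of the distance to $\partial B_\delta(\bar x^\lambda)$ and $\int_0^\infty y_1^s(y_1+\rho)^{-1-2s}\,dy_1=c\,\rho^{-s}$; hence $(-\Delta)^sg$ is unbounded on $B_\delta(\bar x)$ as $x\to p$, and there is no cancellation from the first bump, whose contribution is the positive constant $\gamma_{N,s}$. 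The correct repair is exactly your second suggestion: keep $\overline{B_\delta(\bar x)}$ strictly inside $\Sigma_\lambda$, e.g. replace $\delta$ by half of $\min\{\dist(\bar x,D),\dist(\bar x,T_\lambda)\}$, which forces $d_0\gtrsim\delta$ and a constant $C_0$ independent of $\delta$, and costs nothing in the proof of Theorem \ref{thmj2a} since only positivity at the single point $\bar x$, where $g(\bar x)=\delta^{2s}$, is ever used. Note that this adjustment is also needed for the paper itself: with its choice $\delta=\min\{\dist(\bar x,D),\dist(\bar x,T_\lambda)\}$ tangency can occur, and then Lemma \ref{mlem1} as stated admits no uniform constant, a point the paper's two-line proof passes over.
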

\begin{proof}[\bf Proof]
Let $\phi(x)=c(1-|x|^2)_+^s$. By choosing suitable $c$,  we  have
\begin{equation*}
\left\{\aligned
&(-\Delta)^s\phi(x)=1,  & x \in  B_1(0),\\
&\phi(x)=0,  & x \in B_1^c(0).
\endaligned \right.
\end{equation*}
Let $\phi_\delta(x)=c(\delta^{2}-|x-\bar{x}|^2)_+^s$ and $\phi_\delta^\lambda(x)=c(\delta^{2}-|x-\bar{x}^\lambda|^2)_+^s$.
It is easy to check that
\begin{equation}\label{lmeq1}
\left\{\aligned
&(-\Delta)^s\phi_\delta(x)=1,  &x \in  B_\delta(\bar{x}),\\
&\phi_\delta(x)=0,  &x \in B_\delta^c(\bar{x}).
\endaligned \right.
\end{equation}
and
\begin{equation}\label{lmeq2}
\left\{\aligned
&(-\Delta)^s\phi_\delta^\lambda(x)=1,  & x \in  B_\delta(\bar{x}^\lambda),\\
&\phi_\delta^\lambda(x)=0,  &x \in  B_\delta^c(\bar{x}^\lambda).
\endaligned \right.
\end{equation}
Since $g(x)=\frac{1}{c}\big(\phi_\delta(x)-\phi_\delta^\lambda(x)\big)$, by \eqref{lmeq1} and \eqref{lmeq2}, we have
$$|(-\Delta)^sg(x)|=\frac{1}{c}|(-\Delta)^s\phi_\delta(x)-(-\Delta)^s\phi_\delta^\lambda(x)|\leq C_0.$$
\end{proof}
\begin{lemma}\label{lem4w90}
For $x_1> \mu,\ t>0,$ we have
$$(-\lap)^s \tilde w_\mu(x,t) -(-\lap)^sw_\mu (x,t) <0.$$
\end{lemma}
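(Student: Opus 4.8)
The plan is to turn the left–hand side into a single absolutely convergent integral over the half–space $\tilde\Sigma_\mu=\{x_1>\mu\}$ carrying a kernel of definite sign. First, since $x_1>\mu$ forces $\tilde w_\mu(x,t)=w_\mu(x,t)$, the principal–value singularities at $y=x$ cancel, and one gets
\[
(-\lap)^s\tilde w_\mu(x,t)-(-\lap)^sw_\mu(x,t)=C_{N,s}\int_{\mathbb R^N}\frac{w_\mu(y,t)-\tilde w_\mu(y,t)}{|x-y|^{N+2s}}\,dy .
\]
This integral is absolutely convergent: the integrand vanishes on $\{y_1>\mu\}$ (hence in a neighbourhood of $y=x$), and decays like $|y|^{-(N+2s)}$ at infinity because $w_\mu,\tilde w_\mu\in\mathcal L_{2s}$. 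In particular only $\Sigma_\mu$ contributes.

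Second, I would collapse the integral over $\Sigma_\mu$ onto $\tilde\Sigma_\mu$ by exploiting the two antisymmetries present. On the slab $\{-\mu\le y_1<\mu\}$ one has $\tilde w_\mu(y,t)=0$ and, by antisymmetry of $w_\mu$ about $T_\mu$, $w_\mu(y,t)=-w_\mu(y^\mu,t)$ with $y^\mu\in\tilde\Sigma_\mu$; on $\{y_1<-\mu\}$ the same antisymmetry yields $\tilde w_\mu(y,t)=w_\mu(y_1+2\mu,y',t)=-w_\mu(y^0,t)$ and $w_\mu(y,t)=-w_\mu(y^\mu,t)$, where $y^0=(-y_1,y')$ is the reflection of $y$ in $T_0$ and both $y^0,y^\mu\in\tilde\Sigma_\mu$. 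Substituting $z=y^\mu$ on the slab, and splitting the integral over $\{y_1<-\mu\}$ and substituting $z=y^\mu$ in its $w_\mu(y^\mu,\cdot)$–part and $z=y^0$ in its $w_\mu(y^0,\cdot)$–part (all of these are Euclidean reflections, hence measure–preserving, and the three transformed domains $\{\mu<z_1\le3\mu\}$, $\{z_1>3\mu\}$, $\{z_1>\mu\}$ recombine to $\tilde\Sigma_\mu$), I expect to arrive at
\[
(-\lap)^s\tilde w_\mu(x,t)-(-\lap)^sw_\mu(x,t)=C_{N,s}\int_{\tilde\Sigma_\mu}w_\mu(z,t)\left(\frac{1}{|x-z^0|^{N+2s}}-\frac{1}{|x-z^\mu|^{N+2s}}\right)dz .
\]

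Third, I would read off the signs. For $x,z\in\tilde\Sigma_\mu$ the first coordinates satisfy $x_1+z_1>x_1+z_1-2\mu>0$, so $|x-z^0|>|x-z^\mu|$ and the factor in parentheses is strictly negative throughout $\tilde\Sigma_\mu$. Since in the situation of Step~3 (where $\mu>\lambda_0^+$ and $t$ ranges over the working interval $[t_n,T_n]$) we have $w_\mu(\cdot,t)\ge0$ on $\tilde\Sigma_\mu$ with $w_\mu(\cdot,t)\not\equiv0$, the integral is strictly negative, which is exactly the claim. I expect the main obstacle to be the bookkeeping in the change of variables — correctly identifying $\tilde w_\mu$ on the slab $\{-\mu\le y_1<\mu\}$ and on $\{y_1<-\mu\}$ with values of $w_\mu$ on $\tilde\Sigma_\mu$ through the composed reflections in $T_\mu$ and $T_0$, and verifying that the three transformed pieces tile $\tilde\Sigma_\mu$ without overlap — together with the input that $w_\mu(\cdot,t)\ge0$ on $\tilde\Sigma_\mu$, which is where the choice $\mu>\lambda_0^+$ and the monotonicity established in Step~2 come in.
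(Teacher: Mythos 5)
Your proposal is correct and is essentially the paper's own argument: both reduce the difference to the single integral $C_{N,s}\int \frac{w_\mu(y,t)-\tilde w_\mu(y,t)}{|x-y|^{N+2s}}\,dy$ supported in $\{y_1<\mu\}$, and your final formula over $\tilde\Sigma_\mu$ is exactly the paper's integral over $\{y_1<\mu\}$ after the reflection $y=z^\mu$, with the same kernel comparison (which, as in the paper, implicitly uses $\mu>0$ from the normalization $\lambda=0<\lambda_0^+<\mu$). The sign input you invoke ($w_\mu(\cdot,t)\geq 0$, $\not\equiv 0$ on $\tilde\Sigma_\mu$ on the working time interval) is the same fact the paper uses in the equivalent form $w_\mu(y,t)<0$ for $y_1<\mu$, so your proof stands or falls exactly where the paper's does.
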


\begin{proof}[\bf Proof] For $x_1>\mu$, for each $t>0$, noting that $\tilde w_\mu(x,t)=w_\mu(x,t)$, by the definition of $\tilde w_\mu$, we obtain
\begin{eqnarray*}
&&(-\lap)^s \tilde w_\mu(x,t) -(-\lap)^s w_\mu(x,t)\\
&=&C_{N,s}P.V. \int_{\mathbb R^{N}} \frac{w_\mu(x,t)-\tilde w_\mu(y,t)}{|x-y|^{N+2s}}dy-C_{N,s}P.V. \int_{\mathbb R^{N}} \frac{w_\mu(x,t)-w_\mu(y,t)}{|x-y|^{N+2s}}dy\\
&=&C_{N,s}P.V.  \int_{\mathbb R^{N}} \frac{w_\mu(y,t)-\tilde w_\mu(y,t)}{|x-y|^{N+2s}}dy \\
&=&C_{N,s}\int_{-\infty}^{\mu} \int_{\mathbb R^{N-1}} \frac{w_\mu(y,t)}{|x-y|^{N+2s}}dy'dy_1
-C_{N,s}\int_{-\infty}^{-\mu} \int_{\mathbb R^{N-1}}\frac{\tilde w_\mu(y,t)}{|x-y|^{N+2s}}dy'dy_1 \\
&=&C_{N,s} \int_{\mathbb R^{N-1}} \Big(\int_{-\infty}^{\mu}\frac{w_\mu(y,t)}{(|x_1-y_1|^2+|x'-y'|^2)^{\frac{N+2s}{2}}}\nonumber\\
&&-\int_{-\infty}^{-\mu} \frac{w_\mu(y_1+2\mu,y',t)}{(|x_1-y_1|^2+|x'-y'|^2)^{\frac{N+2s}{2}}}\Big)dy_1dy'\\
&=&C_{N,s}\int_{-\infty}^{\mu} \int_{\mathbb R^{N-1}} \Big(\frac{w_\mu(y,t)}{(|x_1-y_1|^2+|x'-y'|^2)^{\frac{N+2s}{2}}}-\nonumber\\
&&\frac{w_\mu(y_1,y',t)}{(|x_1-y_1+2\mu|^2+|x'-y'|^2)^{\frac{N+2s}{2}}}\Big)dy'dy_1<0,
\end{eqnarray*}
where the last inequality holds due to
$w_\mu(y,t)<0$ for $y_1<\mu$ and
$$\frac{1}{(|x_1-y_1|^2+|x'-y'|^2)^{\frac{N+2s}{2}}}>\frac{1}{(|x_1-y_1+2\mu|^2+|x'-y'|^2)^{\frac{N+2s}{2}}}.$$
\end{proof}

\begin{lemma}\label{lem4w91}
For $x_1>0$, we have
\be \label{4w2021}
(-\lap)^s h(x) =\frac{C}{x_1^{2s}},
\ee
where $ C$ is a positive constant.
\end{lemma}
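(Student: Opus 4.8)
The plan is to evaluate $(-\lap)^s h(x)$ directly from the singular integral formula, using that $h$ depends on $x_1$ only and is locally constant off the hyperplane $\{x_1=0\}$. Fix $x=(x_1,x')$ with $x_1>0$. Since $h\equiv 1$ on $\{y_1\geq 0\}$ and $h\equiv -1$ on $\{y_1<0\}$, one has $h(y)=h(x)$ for all $y$ in a neighborhood of $x$, so the integrand $\frac{h(x)-h(y)}{|x-y|^{N+2s}}$ is not singular there and no principal value is needed; moreover $h\in{\mathcal L}_{2s}$, which guarantees convergence at infinity, so $(-\lap)^s h(x)$ is well defined pointwise for $x_1>0$. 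Writing $h(x)-h(y)=2\,\chi_{\{y_1<0\}}(y)$ gives
\[
(-\lap)^s h(x)=2\,C_{N,s}\int_{\{y_1<0\}}\frac{dy}{|x-y|^{N+2s}}.
\]

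Next I would compute this integral by iterating and rescaling. For fixed $y_1<0$, integrating in $y'\in\RR^{N-1}$ and substituting $y'=x'+(x_1-y_1)w'$ factors out $(x_1-y_1)^{-1-2s}$ times the constant $c_1:=\int_{\RR^{N-1}}(1+|w'|^2)^{-(N+2s)/2}\,dw'$, which is finite and positive since $1+2s>1$. Then $\int_{-\infty}^{0}(x_1-y_1)^{-1-2s}\,dy_1=\frac{1}{2s}\,x_1^{-2s}$, so, collecting constants,
\[
(-\lap)^s h(x)=\frac{C_{N,s}\,c_1}{s}\cdot\frac{1}{x_1^{2s}}=:\frac{C}{x_1^{2s}},\qquad C>0,
\]
which is the asserted identity.

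There is no real obstacle in this argument; the only points deserving a sentence of justification are that no principal value or truncation is required (because $x_1>0$ keeps the jump set $\{y_1=0\}$ a fixed positive distance from $x$, so the integrand stays bounded near $y=x$) and that the $w'$- and $y_1$-integrals converge at both ends, which is immediate from $0<s<1$. Alternatively, one may recognize $h$ as, up to an additive constant, twice the indicator of a half-space and quote the classical scaling relation $(-\lap)^s\chi_{\{y_1<0\}}(x)=c\,x_1^{-2s}$ for $x_1>0$; the self-contained computation above is preferable here since it also pins down the positivity of $C$.
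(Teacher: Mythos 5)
Your proof is correct and follows essentially the same route as the paper: you reduce $(-\lap)^s h(x)$ to the integral of $2C_{N,s}|x-y|^{-N-2s}$ over $\{y_1<0\}$, integrate out $y'$ by the scaling substitution $y'=x'+(x_1-y_1)w'$, and then use the one-dimensional scaling in $y_1$ to extract the factor $x_1^{-2s}$, exactly as in the paper's computation (which uses $y_1=x_1z_1$ instead of evaluating $\int_{x_1}^{\infty}u^{-1-2s}\,du$ directly). Your added remarks on the absence of a principal value and on convergence are fine and do not change the argument.
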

\begin{proof}[\bf Proof ]
For $x_1>0$, by the definition of $h(x)$, we have
$$\aligned (-\lap)^s h(x)=&C_{N,s} \int_{\mathbb R^{N}} \frac{1-h(y_1)}{|x-y|^{1+2s}}dy\\
=& C_{N,s}P.V. \int_{\mathbb R^{N}} \frac{1-h(y_1)}{(|x_1-y_1|^2+|x'-y'|^2)^{\frac{N+2s}{2}}}dy'dy_1,\ \text{letting }y'-x'=|x_1-y_1|z'\\
=& C_{N,s}P.V. \int_{\mathbb R}\int_{\mathbb R^{N-1}} \frac{|x_1-y_1|^{N-1}(1-h(y_1))}{|x_1-y_1|^{N+2s}(1+|z'|^2)^{\frac{N+2s}{2}}}dz'dy_1\\
=&\bar{C}  \int_{\mathbb R}\frac{1-h(y_1)}{|x_1-y_1|^{1+2s}}dy_1
=\bar{C}  \int_{-\infty}^0 \frac{1-h(y_1)}{|x_1-y_1|^{1+2s}}dy_1,\ \ \ \text{letting }y_1=x_1z_1\\
=& \frac{2\bar{C}}{x_1^{2s}}\int_{-\infty}^0 \frac{1}{|1-z_1|^{1+2s}}dz_1=\frac{C}{x_1^{2s}}.
\endaligned $$
\end {proof}

\end{document}